\documentclass[12pt]{amsart}

\usepackage{amsmath,amssymb,amsfonts,mathtools}
\usepackage{xcolor}
\usepackage[colorlinks=true,citecolor=red,linkcolor=blue,urlcolor=blue,
            hypertexnames=false]{hyperref}

\allowdisplaybreaks
\numberwithin{equation}{section}

\newcommand{\R}{\mathbb{R}}

\newcommand{\BMO}{\operatorname{BMO}}
\newcommand{\BMOG}{\operatorname{BMO}_G}
\newcommand{\dist}{\operatorname{dist}}
\newcommand{\supp}{\operatorname{supp}}
\newcommand{\conv}{\operatorname{conv}}

\theoremstyle{plain}
\newtheorem{theorem}{Theorem}[section]
\newtheorem{proposition}[theorem]{Proposition}
\newtheorem{lemma}[theorem]{Lemma}
\newtheorem{corollary}[theorem]{Corollary}

\theoremstyle{definition}
\newtheorem{definition}[theorem]{Definition}

\theoremstyle{remark}
\newtheorem{remark}[theorem]{Remark}

\title[BMO classification for two-dimensional Dunkl kernels]
{BMO Classification for Two-Dimensional Dunkl Newton
and Green Kernels
}
\author{Ji Li}
\address{Ji Li, School of Mathematical and Physical Sciences,
	Macquarie University, NSW 2109, Australia}
\email{ji.li@mq.edu.au}

 \author{Lixin Yan}
\address{Lixin Yan, School of Mathematics,
	Sun Yat-sen University,
	Guangzhou, 510275,
	P.R.~China}
\curraddr{}
\email{mcsylx@mail.sysu.edu.cn}

\author{Huohao Zhang}
\address{Huohao Zhang, School of Mathematics,
	Sun Yat-sen University,
	Guangzhou, 510275,
	P.R.~China}
\email{zhanghh58@mail2.sysu.edu.cn}

\subjclass[2010]{Primary 42B20 }
\keywords{BMO, Newton and Green kernels, Dunkl Laplacian}

\begin{document}
\begin{abstract}
Using Gaussian heat-kernel estimates, we classify planar Dunkl Newton kernels
in weighted Euclidean BMO and obtain the corresponding local classification for
unit-ball Green kernels.  For atomic Newton potentials supported on a regular
reflection orbit, orbit BMO detects exactly whether the coefficients are constant
along the orbit.  We also prove an intrinsic Dunkl--CLMS theorem in the
heat-semigroup Hardy space and derive intrinsic and Euclidean-source
Newton--Wente estimates, extending the BMO--Hardy-space method of Chanillo and
Li to the Dunkl setting.  Further consequences include sharp local atomic
estimates, same-domain criteria, a localized $L^1$-to-BMO bound, and a
Brezis--Merle-type estimate.
\end{abstract}
\maketitle

\section{Introduction and statement of the main results}
\label{sec:introduction}

\subsection{The classical model}

Consider a second-order divergence-form operator in the plane,
\[
 Lu=-\sum_{i,j=1}^2D_i\bigl(a_{ij}D_ju\bigr),
\]
where $A=(a_{ij}(x))_{1\leq i,j\leq2}$ is real, symmetric, and
measurable.  Assume that there is a constant $\lambda>0$ such that
\begin{equation}\label{e1.1}
 A(x)\xi\cdot\xi\geq\lambda|\xi|^2
 \qquad\text{and}\qquad
 |A(x)\xi\cdot\zeta|\leq\lambda^{-1}|\xi||\zeta|
\end{equation}
for all $\xi,\zeta\in\mathbb R^2$ and almost every $x\in\mathbb R^2$.

Chanillo and Li \cite{ChanilloLi1992} organized their study of this
operator around one kernel result and two applications.  If $G_x$ is the
global Green function of $L$ with pole at $x$, defined up to an additive
constant, their first result states that, for every ball
$B=B_R(x_0)\subset\mathbb R^2$ and every $1<p<2$,
\begin{equation}\label{eq:CL-gradient-estimate}
 R\left(\frac1{|B|}\int_B|\nabla G_x(y)|^p\,dy\right)^{1/p}
 \leq C(\lambda,p).
\end{equation}
The Poincar\'e--Sobolev inequality then gives
\begin{equation}\label{eq:CL-BMO-conclusion}
 \|G_x\|_{\BMO(\mathbb R^2)}\leq C(\lambda),
\end{equation}
uniformly in the pole $x$.

Their second result is a Wente-type application of this BMO estimate.  Let
$\Omega\subset\mathbb R^2$ be a bounded domain with $C^1$ boundary.  If
$u,v\in H^1(\Omega)$ and $\varphi\in W_0^{1,1}(\Omega)$ solves
\[
 \begin{cases}
 L\varphi=u_{x_1}v_{x_2}-u_{x_2}v_{x_1}&\text{in }\Omega,\\
 \varphi=0&\text{on }\partial\Omega,
 \end{cases}
\]
then
\[
 \|\varphi\|_{L^\infty(\Omega)}
 +\|\nabla\varphi\|_{L^2(\Omega)}
 \leq C(\lambda)
 \|\nabla u\|_{L^2(\Omega)}
 \|\nabla v\|_{L^2(\Omega)}.
\]
The Hardy-space estimate for the Jacobian and $H^1$--BMO duality are the
main link between the Green-kernel estimate and this conclusion.  Their
third result is a Brezis--Merle-type exponential estimate for the solution
of $L\varphi=f$ with $f\in L^1(\Omega)$.  Thus the main line of their paper
is
\[
 \begin{gathered}
 \text{kernel bounds}\quad\Longrightarrow\quad\text{BMO},\\
 \text{BMO}\quad\Longrightarrow\quad
 \text{Wente and exponential estimates}.
 \end{gathered}
\]
The whole-plane Laplacian estimate is due to Wente \cite{Wente}.
For later extensions to complex coefficients and to elliptic systems and
mixed boundary problems, see
\cite{AuscherMcIntoshTchamitchian1998,TaylorKimBrown2013}.

\subsection{The Dunkl setting and main results}

Dunkl operators are differential--reflection versions of directional
derivatives associated with a finite root system.  Their harmonic analysis
has many Euclidean features, but reflection orbits produce new local
geometry and new singularities.  We refer to \cite{Dunkl1989,ADH} for the
basic setting.  The Dunkl Newton kernel and the Green kernel of the unit
ball were constructed and studied by Graczyk, Luks, and R\"osler
\cite{GLR}.

We remove roots with zero multiplicity and denote the remaining root
system again by $R$.  Thus $k(\alpha)>0$ for every $\alpha\in R$.  Let $G$
be the corresponding reflection group, and put
\[
 w_k(x)=\prod_{\alpha\in R_+}|\langle\alpha,x\rangle|^{2k(\alpha)},
 \qquad d\mu_k(x)=w_k(x)\,dx,
 \qquad \gamma=\sum_{\alpha\in R_+}k(\alpha).
\]
The reflecting lines and the regular set are
\[
 \mathcal H=\bigcup_{\alpha\in R}\alpha^\perp,
 \qquad
 \mathbb R^2_{\mathrm{reg}}=\mathbb R^2\setminus\mathcal H.
\]
The connected components of $\mathbb R^2_{\mathrm{reg}}$ are the open
Weyl chambers.  If $\Gamma_k(t,x,y)$ is the Dunkl heat kernel and
$Q=2+2\gamma>2$, the global Newton kernel is
\[
 N_k(x,y)=\int_0^\infty\Gamma_k(t,x,y)\,dt.
\]
We write $G_k$ for the Dirichlet Green kernel of the unit ball $B$.
Unless stated otherwise, $\BMO(\mu_k)$ is defined using Euclidean balls.

The first question is whether the uniform BMO conclusion
\eqref{eq:CL-BMO-conclusion} survives in this setting.  It does not.  The
answer depends on the position of the pole.  A regular pole has a
logarithmic singularity, while a pole on a reflecting line produces power
growth of small-ball averages.  Our first main result gives the exact
classification.

\begin{theorem}\label{thm:main-classification}
Assume $\gamma>0$.  Let $N_k$ be the global Dunkl Newton kernel and $G_k$
the Dirichlet Green kernel of the unit ball $B$.  Then:
\begin{enumerate}
\item[(a)] $N_k(\cdot,0)\notin\BMO(\mu_k)$, and
$G_k(\cdot,0)\notin\BMO_{\mathrm{loc}}(B,\mu_k)$.
\item[(b)] If $y\in\mathcal H\setminus\{0\}$, then
$N_k(\cdot,y)\notin\BMO(\mu_k)$; if also $y\in B$, then
$G_k(\cdot,y)\notin\BMO_{\mathrm{loc}}(B,\mu_k)$.
\item[(c)] If $y\in\mathbb R^2_{\mathrm{reg}}$, then
$N_k(\cdot,y)\in\BMO(\mu_k)$; if also $y\in B$, then
$G_k(\cdot,y)\in\BMO_{\mathrm{loc}}(B,\mu_k)$.
\end{enumerate}
\end{theorem}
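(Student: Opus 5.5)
The plan is to derive everything from two-sided Gaussian bounds for the Dunkl heat kernel, in the form of Dziubański--Hejna:
\[
 \Gamma_k(t,x,y)\asymp \frac{1}{\mu_k(B(x,\sqrt t))}\,e^{-c\,d(x,y)^2/t}\Bigl(1+\tfrac{|x-y|}{\sqrt t}\Bigr)^{-2},
\]
where $d(x,y)=\min_{g\in G}|x-gy|$ is the orbit distance. We will use the upper bound with constant $c$, and a lower bound with a possibly different constant. Integrating in $t$ gives pointwise control of $N_k$. Recall that $\mu_k(B(x,r))\asymp r^2\prod_{\alpha}(|\langle\alpha,x\rangle|+r)^{2k(\alpha)}$. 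For fixed $y$ and $x$ near $y$, the integral $\int_0^1$ is governed by the orbit point nearest to $x$.

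For $y$ regular and $x$ close to $y$, the measure is locally comparable to $w_k(y)\,dx$. We then get $|N_k(x,y)-w_k(y)^{-1}\tfrac1{2\pi}\log\frac1{|x-y|}|\le C$ near $y$. Near the other orbit points $gy$ we get the same logarithmic behaviour. Far away, $N_k(x,y)\lesssim \mu_k(B(y,|x-y|))^{-1}|x-y|^2$, a decaying power. To obtain actual BMO (oscillation) and not only size, we also need the gradient-type bound $|\nabla_x N_k(x,y)|\lesssim \sum_g |x-gy|^{-1}$ locally and $\lesssim |x|^{-1}\,(\text{decay})$ globally. We get this from time-derivative and gradient Gaussian bounds, or through a Poincaré-type argument as in Chanillo--Li. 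We then split a ball $B_r(x_0)$ according to whether it meets a small neighbourhood of the orbit $Gy$.
\begin{itemize}
\item If it does, the oscillation is that of $\log|x-gy|$ against the measure $\mu_k$. Since $y$ is regular, $\mu_k$ is doubling and comparable to Lebesgue measure there, so this oscillation is bounded.
\item If it does not, the mean-value/gradient bound times $r$ gives $O(1)$, since the decay $|x-gy|^{-1}$ is compensated by $r\lesssim$ distance.
\item Large balls are handled by size bounds, because $N_k(\cdot,y)$ is bounded outside a compact set.
\end{itemize}
This proves (c) for $N_k$. For $G_k$ we write $G_k(\cdot,y)=N_k(\cdot,y)-h_y$, where $h_y$ is the Dunkl-harmonic corrector with boundary data $N_k(\cdot,y)|_{\partial B}$ from \cite{GLR}. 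The corrector is smooth, hence bounded with bounded gradient, on compact subsets of $B$, so local BMO follows.

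For (a) and (b), the pole $y$ lies on $\mathcal H$, so $|y-gy|$ can be zero and the weight degenerates at $y$. The lower Gaussian bound gives $N_k(x,y)\gtrsim\int_{|x-y|^2}^{1}\mu_k(B(y,\sqrt t))^{-1}dt$. At $y=0$ we have $\mu_k(B(0,s))\asymp s^{Q}$, so $N_k(x,0)\asymp |x|^{2-Q}$ with $Q>2$. This is a power singularity: averages over $B_r(0)$ grow like $r^{2-Q}$, while on the annulus $B_r\setminus B_{r/2}$ the function is a fixed fraction smaller. Since the annulus carries a fixed proportion of $\mu_k(B_r)$, the mean oscillation is $\gtrsim r^{2-Q}\to\infty$. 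Hence $N_k(\cdot,0)\notin\BMO$. Because $h_0$ is bounded near $0$, the same holds for $G_k(\cdot,0)$ in $\BMO_{\mathrm{loc}}(B,\mu_k)$.

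For $y\in\alpha^\perp\setminus\{0\}$, only the single root $\alpha$ vanishes at $y$ (up to sign, in the plane). Locally $\mu_k(B(y,s))\asymp s^{2+2k(\alpha)}$ for $s\ll|y|$. So $N_k(x,y)\gtrsim |x-y|^{-2k(\alpha)}$ near $y$, and the same annulus argument gives oscillation $\gtrsim r^{-2k(\alpha)}\to\infty$, again with the corrector bounded near $y$. One subtlety is that the precise exponent depends on how $\mu_k$ and the kernel behave along directions. A lower bound of the form $N_k(x,y)\ge c\,|x-y|^{-2k(\alpha)}$ on a cone of directions with positive $\mu_k$-proportion, together with the upper bound $N_k\lesssim |x-y|^{-2k(\alpha)}$, suffices.

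The main obstacle is (c): proving bounded mean oscillation rather than just pointwise logarithmic size. This requires a regularity estimate for $N_k(\cdot,y)$ on balls away from the orbit, and uniform control when a ball straddles several orbit points or reflecting lines, where $\mu_k$ is doubling but not locally Lebesgue. I would first try to get the Lipschitz bound from the Dziubański--Hejna regularity estimate $|\Gamma_k(t,x,y)-\Gamma_k(t,x',y)|\lesssim \frac{|x-x'|}{\sqrt t}\Gamma_k(ct,x,y)$ for $|x-x'|\le\sqrt t$, integrated in $t$. If that does not suffice, I would fall back on a Chanillo--Li style $L^p$ gradient estimate combined with the weighted Poincaré inequality for the doubling weight $w_k$.
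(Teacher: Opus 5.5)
Your architecture matches the paper's: a logarithmic upper bound plus the spatial Lipschitz estimate of $\Gamma_k$ for regular poles, the Euclidean lower Gaussian bound for poles on $\mathcal H$, and the Graczyk--Luks--R\"osler decomposition with a corrector bounded near the pole. However, the step on which your proof of (c) actually rests does not hold as written. Reducing the oscillation near the orbit to ``that of $\log|x-gy|$'' requires $N_k(x,y)=a_g\log\frac1{|x-gy|}+O(1)$ near each $gy$, and neither half of your justification gives this.
\begin{itemize}
\item Near $y$: two-sided Gaussian bounds, whose constants in the exponent and in front differ, only yield $c\log\frac1{|x-y|}-C\leq N_k(x,y)\leq C\log\frac1{|x-y|}+C$. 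They cannot produce the constant $\frac1{2\pi w_k(y)}$ with a bounded remainder.
\item Near a reflected point $gy\neq y$: the claim is false. There $|x-y|\geq c_0$, so the factor $(1+|x-y|/\sqrt t)^{-2}\lesssim t$ in your own displayed bound cancels the $t^{-1}$ coming from $\mu_k(B(x,\sqrt t))$. Hence $\Gamma_k(t,x,y)\lesssim1$ for small $t$, and $N_k(\cdot,y)$ is \emph{bounded} near $gy$ (Lemma~\ref{lem:reflected-orbit-boundedness}).
\end{itemize}

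The repair avoids identifying the singularity altogether. For a small ball $B=B(z,r)$ inside a neighbourhood of one orbit point $b$, the paper sets $c_B=\int_{4r^2}^\infty\Gamma_k(t,z,y)\,dt$. It estimates $\int_0^{4r^2}\Gamma_k(t,x,y)\,dt$ with the orbit-distance upper bound, and $\int_{4r^2}^\infty|\Gamma_k(t,x,y)-\Gamma_k(t,z,y)|\,dt$ with the Lipschitz estimate (legitimate since $|x-z|<r\leq\sqrt t/2$). This gives $|N_k(x,y)-c_B|\leq C(1+\log^+\frac{2r}{|x-b|})$ on $B$, whose $\mu_k$-average is bounded. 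The argument treats the pole and its reflections alike. Alternatively, your gradient bound $|\nabla_xN_k(x,y)|\lesssim|x-gy|^{-1}$, obtained by integrating the same Lipschitz estimate, also works with the Euclidean Poincar\'e inequality, since $\int_{B(z,r)}|x-b|^{-1}\,dx\leq2\pi r$.

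What you call the main obstacle is not one. A small ball that misses all orbit neighbourhoods satisfies $0\leq N_k(\cdot,y)\leq C$ by the logarithmic majorant, whatever reflecting lines it crosses, so its oscillation is at most $2C$. A ball of radius $r\gtrsim\rho$ has $\mu_k(B)\gtrsim\rho^Q$ and $\int_BN_k(\cdot,y)\,d\mu_k\leq C\mu_k(B)+C$, because the logarithmic singularities are $\mu_k$-integrable; positivity then bounds its oscillation by twice its average. So no regularity is needed away from the orbit. Also, ``bounded outside a compact set'' is not the right reason for large balls, since they may contain the poles.

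In (a) and (b) the annulus comparison is also off. At the origin the $B_r$-average of $C_k|x|^{-2\gamma}$ is $(1+\gamma)C_kr^{-2\gamma}$. On $B_r\setminus B_{r/2}$ the function takes values up to $2^{2\gamma}C_kr^{-2\gamma}$, which exceeds $(1+\gamma)C_kr^{-2\gamma}$ for every $\gamma>0$. At a point of $\mathcal H\setminus\{0\}$, where you only have two-sided bounds with unequal constants, no annulus comparison is justified at all. Compare averages at two scales instead, as the paper does. A function $F\in\BMO(\mu_k)$ satisfies $|F_{B(y,r)}-F_{B(y,R)}|\leq C(1+\log\frac Rr)\|F\|_{\BMO(\mu_k)}$, whereas the lower heat-kernel bound alone gives $F_{B(y,r)}\gtrsim r^{-2k(\alpha)}$. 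This yields the contradiction without any upper bound.
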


For a regular pole, the proof uses the Gaussian size and spatial regularity
estimates for the Dunkl heat kernel.  Their time integrals give a
logarithmic majorant and a direct BMO bound.  The bound is uniform when the
pole ranges over a compact subset of $\mathbb R^2_{\mathrm{reg}}$.  At the
origin and on a reflecting line, a lower heat-kernel estimate gives power
growth of small-ball averages, which is not compatible with BMO.  The
Green-kernel statements follow from the exact Newton--Green decomposition
in \cite[Theorem~3.1]{GLR}.

Section~\ref{sec:applications} gives two first applications.  For a regular
point $a$ and coefficients $(c_b)_{b\in\mathcal O(a)}$, the atomic Newton
potential
\[
 U_c(x)=\sum_{b\in\mathcal O(a)}c_bN_k(x,b)
\]
always belongs to Euclidean BMO, while it belongs to orbit BMO if and only
if the coefficients are constant on the orbit.  The same section also
shows that, for a bounded open set $\Omega$, the local atomic Newton map
\[
 N_\Omega:H^1_{\mathrm{at}}(\Omega,\mu_k)
 \longrightarrow L^\infty(\Omega)
\]
is bounded if and only if
\[
 \dist(\overline\Omega,\mathcal H)>0.
\]
This geometric condition also governs the Newton--Wente estimates below.

We next turn to the analogue of the second step in Chanillo and Li's
argument.  Write
\[
 \nabla_k u=(T_{e_1}u,T_{e_2}u),
 \qquad
 \mathcal J_k(u,v)
 =T_{e_1}u\,T_{e_2}v-T_{e_2}u\,T_{e_1}v.
\]
Let $H_k^1$ be the Hardy space associated with the Dunkl heat semigroup.
We prove the Dunkl--CLMS estimate
\begin{equation}\label{eq:intro-dunkl-CLMS}
 \|\mathcal J_k(u,v)\|_{H_k^1}
 \leq C_k
 \|\nabla_k u\|_{L^2(d\mu_k)}
 \|\nabla_k v\|_{L^2(d\mu_k)},
\end{equation}
for $u,v\in C_c^\infty(\mathbb R^2)$.
At regular evaluation points, Theorem~\ref{thm:main-classification} has a
compact-uniform BMO form.  Combining this estimate with
\eqref{eq:intro-dunkl-CLMS} and $H_k^1$--BMO duality gives the positive
Newton--Wente bound.  A concentrating construction near the reflecting
lines gives the converse.

For $u\in C_c^\infty(\Omega\setminus\mathcal H)$, let $\widetilde u$ be
its zero extension to $\mathbb R^2$.  Let
$W^{1,2}_{k,0}(\Omega)$ be the completion of this test class in the norm
\[
 \left(
 \|\widetilde u\|_{L^2(d\mu_k)}^2
 +\|\nabla_k\widetilde u\|_{L^2(d\mu_k)}^2
 \right)^{1/2}.
\]
For test functions $u,v,F,G\in C_c^\infty(\Omega\setminus\mathcal H)$,
define
\[
 \mathcal W_{\Omega,k}(u,v)(x)
 =\int_\Omega N_k(x,y)
 \mathcal J_k(\widetilde u,\widetilde v)(y)\,d\mu_k(y)
\]
and
\[
 W_\Omega(F,G)(x)
 =\int_\Omega N_k(x,y)
 \bigl(F_{y_1}G_{y_2}-F_{y_2}G_{y_1}\bigr)(y)\,dy.
\]
The first potential has an intrinsic Dunkl source.  The second has a
Euclidean Jacobian source and is the closer analogue of the classical
Wente potential.
In statement (iii) below, a bounded bilinear extension means a bounded
bilinear map on $W_0^{1,2}(\Omega)\times W_0^{1,2}(\Omega)$ that agrees with
$W_\Omega$ on the displayed test class.  No density assertion is intended
when $\Omega$ meets $\mathcal H$.

\begin{theorem}[Sharp Newton--Wente criterion]
\label{thm:intro-wente}
Assume $\gamma>0$, and let $\Omega\subset\mathbb R^2$ be a bounded domain.
The following statements are equivalent:
\begin{enumerate}
\item[(i)] $\dist(\overline\Omega,\mathcal H)>0$;
\item[(ii)] $\mathcal W_{\Omega,k}$ admits a bounded bilinear extension
\[
 W^{1,2}_{k,0}(\Omega)\times W^{1,2}_{k,0}(\Omega)
 \longrightarrow L^\infty(\Omega);
\]
\item[(iii)] $W_\Omega$ admits a bounded bilinear extension
\[
 W_0^{1,2}(\Omega)\times W_0^{1,2}(\Omega)
 \longrightarrow L^\infty(\Omega).
\]
\end{enumerate}
When these conditions hold,
\[
 \|\mathcal W_{\Omega,k}(u,v)\|_{L^\infty(\Omega)}
 \leq C_{\Omega,k}
 \|\nabla_k\widetilde u\|_{L^2(d\mu_k)}
 \|\nabla_k\widetilde v\|_{L^2(d\mu_k)}
\]
and
\[
 \|W_\Omega(F,G)\|_{L^\infty(\Omega)}
 \leq C_{\Omega,k}
 \|\nabla F\|_{L^2(\Omega)}
 \|\nabla G\|_{L^2(\Omega)}.
\]
\end{theorem}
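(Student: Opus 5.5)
We prove (i)$\Rightarrow$(ii), (i)$\Rightarrow$(iii), and then the two converses (ii)$\Rightarrow$(i) and (iii)$\Rightarrow$(i) by contrapositive.

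\emph{Positive direction.} Assume $\delta:=\dist(\overline\Omega,\mathcal H)>0$, and let $K$ be the closed $\delta/2$-neighbourhood of $\overline\Omega$. Then $K$ is a compact subset of $\mathbb R^2_{\mathrm{reg}}$. The proof of Theorem~\ref{thm:main-classification}(c) gives a bound for $\|N_k(x,\cdot)\|_{\BMO(\mu_k)}$ that is uniform in $x\in K$. By symmetry of the kernel, we apply this in the second variable, with the evaluation point $x\in\Omega$ regular. For (ii), take $u,v$ in the test class. The Dunkl--CLMS estimate \eqref{eq:intro-dunkl-CLMS} puts $\mathcal J_k(\widetilde u,\widetilde v)$ in $H^1_k$. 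We then pair it with $N_k(x,\cdot)$ using $H^1_k$--BMO duality. Here $\BMO$ must be the space dual to the heat-semigroup $H^1_k$, and we check that the Euclidean-ball $\BMO(\mu_k)$ bound controls it, at least for the truncated kernel $N_k(x,\cdot)\chi$ with $\chi$ a cutoff equal to $1$ near $\overline\Omega$. This gives
\[
|\mathcal W_{\Omega,k}(u,v)(x)|\le C\|\nabla_k\widetilde u\|_{L^2(d\mu_k)}\|\nabla_k\widetilde v\|_{L^2(d\mu_k)},
\]
uniformly in $x\in\Omega$. The bound then extends by density to $W^{1,2}_{k,0}(\Omega)$.

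For (iii), write $F_{y_1}G_{y_2}-F_{y_2}G_{y_1}\,dy=w_k^{-1}\,\mathcal J_0(F,G)\,d\mu_k$. On a neighbourhood of $\overline\Omega$, the weight $w_k$ is smooth and bounded above and below. Hence $x\mapsto N_k(x,\cdot)w_k^{-1}\chi$ is still in $\BMO$, and in fact in Euclidean $\BMO$ locally, uniformly in $x$. We apply the classical CLMS theorem to the Euclidean Jacobian of $F,G\in C_c^\infty(\Omega)$ and use Euclidean $H^1$--BMO duality. Away from $\mathcal H$ the two measures are comparable, so this yields the stated bound with $\|\nabla F\|_{L^2}\|\nabla G\|_{L^2}$. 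The two bounds then define the extensions.

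\emph{Negative direction.} Suppose $\overline\Omega$ meets $\mathcal H$. Since $\Omega$ is open, it contains points arbitrarily close to some $y_0\in\mathcal H$. By Theorem~\ref{thm:main-classification}(a),(b), the lower heat-kernel estimate gives power growth of small-ball averages of $N_k(\cdot,y)$ near $y_0$. We build test pairs concentrating at a regular point $x_\epsilon\in\Omega$ at distance $\epsilon$ from $\mathcal H$. We take Wente-type pairs: $u_\epsilon,v_\epsilon$ equal to rescaled bumps $\phi((y-x_\epsilon)/r)$ and $y_1\phi((y-x_\epsilon)/r)$-type functions with $r\ll\epsilon$. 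These have Jacobian of fixed sign and unit Dirichlet energy, after normalizing by $w_k(x_\epsilon)$ in the Dunkl case. The reflection terms in $T_{e_j}$ are controlled because the supports avoid $\mathcal H$. Testing the potential at $x=x_\epsilon$ gives $\int N_k(x_\epsilon,y)\,\mathcal J\,d\mu$ comparable to an average of $N_k(x_\epsilon,\cdot)$ over the support, minus a constant. We then show this blows up as $\epsilon\to0$ at the rate $\epsilon^{-2\gamma_0}$ or $\log$ dictated by the proof of (b). The blow-up uses the lower estimate for $\Gamma_k$ near the line, where $w_k(x_\epsilon)\sim\epsilon^{2k}$ enters the normalization. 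This contradicts (ii) or (iii). In case (iii), the factor $w_k$ in $d\mu_k$ versus $dy$ makes the blow-up even more direct.

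The main obstacle is this converse. The Jacobian has mean zero, so the leading singular behaviour of $N_k$ cancels. One must exhibit a nontrivial oscillation of $N_k(x_\epsilon,\cdot)$ at scale $\sim\epsilon$ with size tending to infinity, not merely a large value. I would first try to extract this from the oscillation lower bound in the proof of Theorem~\ref{thm:main-classification}(b), by choosing $r\sim\epsilon$ and pairing against a Jacobian that approximates a dipole oriented across the level sets of $N_k(x_\epsilon,\cdot)$.
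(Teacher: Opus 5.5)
Your positive direction is essentially the paper's: Dunkl--CLMS with $H^1_k$--$\BMO_k$ duality and the compact-uniform regular-pole bound for (ii), and classical CLMS with a localized kernel for (iii). One step is missing. $\mathcal W_{\Omega,k}$ integrates only over $\Omega$, whereas $\mathcal J_k(\widetilde u,\widetilde v)$ carries reflection terms and a priori lives on $\mathcal O(\Omega)$. You cannot simply pair with $\mathbf 1_\Omega N_k(x,\cdot)$ instead, because that function is not uniformly in BMO as $x$ approaches $\partial\Omega$. The paper closes this as follows. A domain with $\dist(\overline\Omega,\mathcal H)>0$ lies in one chamber $D$, and Lemma~\ref{lem:global-chamber-gauge} then shows that $\mathcal J_k(\widetilde u,\widetilde v)$ vanishes a.e.\ outside $\Omega$. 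On $D$ it is $\rho_k^{-2}$ times the Euclidean Jacobian of $(\rho_ku,\rho_kv)$; off $D$ both Dunkl gradients are multiples of a single root. Hence the $\Omega$-integral is the global pairing. Your cutoff $\chi$ can replace this argument only if $\supp\chi\subset D$ and you verify that vanishing.

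The genuine gap is the converse. You leave it unresolved, and the construction you sketch fails.

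\begin{itemize}
\item A Jacobian of compactly supported functions has integral zero, so it cannot have fixed sign. Concretely, take $u=\phi(z)$, $v=z_1\phi(z)$ with $\phi$ radial and $z=(y-x_\epsilon)/r$. The Jacobian is $-\phi\,\partial_{z_2}\phi$, which is odd in $z_2$, so it pairs to zero against the radial leading part of $N_k(x_\epsilon,\cdot)$.
\item The relevant blow-up is not the power growth of averages at a pole on $\mathcal H$ from part (b). At a regular point $p$ and scales $r<c_0\dist(p,\mathcal H)$, the short-time part of $N_k(p,p+rz)$ behaves like $w_k(p)^{-1}(1+\log(1/|z|))$. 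The large-time part oscillates only by $O(w_k(p)^{-1})$ on $B(p,r)$. So the logarithmic singularity has coefficient $1/w_k(p)\to\infty$ (cf.\ Corollary~\ref{cor:bmo-norm-lower-bound}).
\end{itemize}

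The missing idea is a zero-mean \emph{radial} source, positive near the pole and negative on an annulus, realized exactly as a Jacobian. The paper takes $q=h(|z|)h'(|z|)/|z|$, which is the Jacobian of $(U,V)=h(|z|)\,z/|z|$ (Lemma~\ref{lem:Jacobian-realization}). Lemma~\ref{lem:radial-test} then gives $\int N_k(p,p+rz)q(z)\,dz\ge c/w_k(p)$. The gauge $u=\rho_k^{-1}F$ makes $\mathcal J_k(u,v)\,d\mu_k$ equal to the rescaled $q\,dy$ exactly. By the energy identity, the Dunkl energy is at most $\|\nabla U\|_2^2+C(r/\delta)^2\|U\|_2^2$, which stays bounded since $r\lesssim\delta=\dist(p,\mathcal H)$. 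Without this, or an equivalent explicit test, neither (ii)$\Rightarrow$(i) nor (iii)$\Rightarrow$(i) is established.
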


\begin{remark}
The corresponding uniform
estimates on all of $\mathbb R^2$ fail when $\gamma>0$, as follows from the
concentrating construction near the reflecting lines used in the proof of
Proposition~\ref{prop:intrinsic-Wente-obstruction}. This motivates restricting attention to bounded domains separated from the reflecting lines. Indeed, if $\Omega$ is bounded and $\dist(\overline\Omega,\mathcal H)>0$, the relevant poles remain in a compact subset of $\mathbb R^2_{\mathrm{reg}}$, where the Newton-kernel BMO bounds are uniform.
\end{remark}

Theorem~\ref{thm:intro-wente} concerns the global Newton kernel restricted
to $\Omega$, rather than the Dirichlet Green kernel.  For smooth data, the
propositions used in its proof also identify the corresponding
distributional equations.  The proof also gives the global
intrinsic estimate on every compact set
$K\Subset\mathbb R^2_{\mathrm{reg}}$.

The last main result is a separate application of the uniform regular-pole
estimates.  If
$\Omega\Subset\mathbb R^2_{\mathrm{reg}}$ and
$f\in L^1(\Omega,d\mu_k)$, put
\[
 N_\Omega f(x)=\int_\Omega N_k(x,y)f(y)\,d\mu_k(y),
 \qquad x\in\mathbb R^2.
\]
The uniform BMO estimate gives an $L^1$-to-BMO bound, while the stronger
pointwise logarithmic estimate gives normalized exponential
integrability.

\begin{theorem}[Localized BMO and Brezis--Merle-type estimate]
\label{thm:intro-brezis-merle}
Let $\Omega\Subset\mathbb R^2_{\mathrm{reg}}$.  There are constants
$c_{\Omega,k},C_{\Omega,k}>0$ such that
\[
 \|N_\Omega f\|_{\BMO(\mu_k)}
 \leq C_{\Omega,k}\|f\|_{L^1(\Omega,d\mu_k)}
\]
for every $f\in L^1(\Omega,d\mu_k)$, and, for every nonzero such $f$,
\[
 \int_\Omega
 \exp\left(
 c_{\Omega,k}
 \frac{|N_\Omega f(x)|}
 {\|f\|_{L^1(\Omega,d\mu_k)}}
 \right)d\mu_k(x)
 \leq C_{\Omega,k}.
\]
Consequently, for every $f\in L^1(\Omega,d\mu_k)$ and every $\beta>0$,
\[
 \exp\bigl(\beta|N_\Omega f|\bigr)
 \in L^1(\Omega,d\mu_k).
\]
\end{theorem}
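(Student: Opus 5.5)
The plan is to derive everything from the regular-pole estimates behind Theorem~\ref{thm:main-classification}(c), made uniform over a compact set of poles. Since $\Omega\Subset\mathbb R^2_{\mathrm{reg}}$, the closure $K=\overline\Omega$ is a compact subset of $\mathbb R^2_{\mathrm{reg}}$. The first step is to record two facts, both coming from the time integral of the Gaussian size and regularity bounds for $\Gamma_k$.
\begin{enumerate}
\item[(1)] A uniform BMO bound: $\sup_{y\in K}\|N_k(\cdot,y)\|_{\BMO(\mu_k)}\le C_K$.
\item[(2)] A uniform logarithmic majorant on the bounded set $\Omega$:
\[
|N_k(x,y)|\le C_K\bigl(1+\log^+\tfrac1{|x-y|}\bigr)
\qquad\text{for } x\in\Omega,\ y\in K.
\]
\end{enumerate}
For (2), the short-time part of $\int_0^\infty\Gamma_k\,dt$ behaves like the Euclidean heat kernel. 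The reason is that near a regular point, $\mu_k(B(x,\sqrt t))\approx w_k(x)t$ for $t$ small, with $w_k$ bounded above and below on a neighbourhood of $K$. The large-time part is $O(1)$ because $Q>2$.

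The BMO estimate then follows from Minkowski's inequality for the oscillation. For a ball $B'$, choose the constant $c_{B'}=\int_\Omega c_{B'}(y)f(y)\,d\mu_k(y)$, where $c_{B'}(y)$ is a near-optimal constant for $N_k(\cdot,y)$ on $B'$. Fubini then gives
\[
\frac1{\mu_k(B')}\int_{B'}|N_\Omega f-c_{B'}|\,d\mu_k\le\int_\Omega\|N_k(\cdot,y)\|_{\BMO}|f(y)|\,d\mu_k(y)\le C_K\|f\|_{L^1}.
\]
Two technical points need care here. First, $c_{B'}(y)$ must be measurable in $y$; taking the average of $N_k(\cdot,y)$ over $B'$ settles this, at the cost of a factor $2$. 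Second, we need local integrability of $N_\Omega f$, which follows from (2) and Fubini.

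For the exponential estimate I would follow Brezis--Merle using Jensen's inequality. By homogeneity, assume $\|f\|_{L^1}=1$ and set $d\nu=|f|\,d\mu_k$, a probability measure on $\Omega$. Then $|N_\Omega f(x)|\le\int_\Omega|N_k(x,y)|\,d\nu(y)$, and convexity of the exponential gives
\[
\exp\bigl(c|N_\Omega f(x)|\bigr)\le\int_\Omega \exp\bigl(c|N_k(x,y)|\bigr)\,d\nu(y).
\]
Integrating in $x$ over $\Omega$ and applying Tonelli, it suffices to show $\sup_{y\in K}\int_\Omega\exp(c|N_k(x,y)|)\,d\mu_k(x)<\infty$. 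By (2) this integrand is at most $e^{cC_K}(1+|x-y|^{-cC_K})$. The weight $w_k$ is bounded on the bounded set $\Omega$, and $|x-y|^{-s}$ is integrable in the plane for $s<2$. Choosing $c_{\Omega,k}<2/C_K$ therefore closes the bound.

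For the last assertion, given arbitrary $\beta>0$ I would split $f=f_1+f_2$ with $\|f_1\|_{L^1}<c_{\Omega,k}/(2\beta)$ and $f_2$ bounded; this is possible by truncation, since $L^\infty$ functions are dense in $L^1$. Then $N_\Omega f_2$ is bounded on $\Omega$, because (2) makes $N_k(x,\cdot)$ uniformly integrable. Hence $e^{\beta|N_\Omega f|}\le e^{\beta\|N_\Omega f_2\|_\infty}e^{\beta|N_\Omega f_1|}$. The last factor is integrable by the previous step, since $\beta<c_{\Omega,k}/(2\|f_1\|_{L^1})$; use only the exponent $2\beta\|f_1\|/c<1$ and Hölder or Jensen.

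The main obstacle is establishing (1) and (2) \emph{uniformly} in $y\in K$, together with the upper bound on $|N_k|$ and not only on $N_k$ itself. I would redo the regular-pole argument of Theorem~\ref{thm:main-classification}(c) while tracking constants. The key inputs are the lower bound $\dist(K,\mathcal H)>0$ and the local comparability $\mu_k(B(y,r))\approx r^2$ for $r\lesssim\dist(K,\mathcal H)$. For the reflected terms, the Gaussian bound involves $d(x,y)=\min_g|x-gy|$. Because $gy$ stays at a positive distance from $\Omega$ whenever $gy\ne y$, these terms contribute only bounded pieces.
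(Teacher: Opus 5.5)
Your proposal is correct and follows the paper's proof essentially step by step. The BMO bound uses the averaged constant $m_B(y)$, Fubini, and the uniform regular-pole BMO bound, as in Proposition~\ref{prop5.1}. The exponential bound uses a uniform logarithmic majorant with Jensen for $|f|\,d\mu_k/\|f\|_{L^1}$ and $cC_K<2$, as in Lemma~\ref{lem:uniform-log-bound} and Theorem~\ref{thm:brezis-merle-type}. The last assertion uses the bounded-plus-small splitting of Corollary~\ref{coro5.4}.

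One justification is wrong as written, though the damage is easily repaired.

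\emph{Where it fails.} The hypothesis $\Omega\Subset\mathbb R^2_{\mathrm{reg}}$ does not put $\Omega$ inside a single Weyl chamber; take $\Omega=B(a,r)\cup B(\sigma_\alpha a,r)$. So a reflected point $gy\neq y$ can lie in $\Omega$, and your claim that $gy$ stays a positive distance from $\Omega$ is false in general. The basic Gaussian bound only sees the orbit distance $d(x,y)$. Integrating it in time gives $N_k(x,y)\le C_K\bigl(1+\log^+\frac{1}{d(x,y)}\bigr)$, which is singular at every $x=gy$, not just at $x=y$. The diagonal-only form of your (2) is true, but the paper proves it with the refined Dziuba\'nski--Hejna factor $\Lambda$ through Lemma~\ref{lem:reflected-orbit-boundedness}, not with the basic bound.

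\emph{Why you don't need it.} The orbit-distance majorant already suffices for all three assertions:
\[
e^{cN_k(x,y)}\le e^{cC_K}\Bigl(1+\sum_{g\in G}|x-gy|^{-cC_K}\Bigr),
\]
which is integrable over $\Omega$ uniformly in $y$ once $cC_K<2$. The same majorant makes $N_\Omega g$ bounded for bounded $g$.

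\emph{A related small point.} Your (2) is stated only for $x\in\Omega$, so it does not give $N_\Omega f\in L^1_{\mathrm{loc}}(\mathbb R^2,d\mu_k)$, which the BMO statement requires. The orbit-distance majorant covers this too. Use symmetry to place the volume factor $\mu_k(B(y,\sqrt t))$ at the regular pole $y$; the bound then holds for all $x\in\mathbb R^2$, uniformly in $y\in\overline\Omega$. This is exactly the estimate \eqref{eq:newton-log-majorant}.
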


The proof of the exponential estimate uses the uniform logarithmic
majorant, positivity of the Newton kernel, and Jensen's inequality.  We do
not claim a sharp exponential constant.

The rest of the paper is organized as follows.
Section~\ref{sec:dunkl-bmo} recalls the Dunkl setting, proves
Theorem~\ref{thm:main-classification}, and gives the refined estimate at
reflected orbit points.  Section~\ref{sec:applications} proves the
orbit-balance theorem, off-diagonal regularity, and the local atomic
results.  Section~\ref{sec:wente} proves the Dunkl--CLMS theorem and the
intrinsic and Euclidean-source Newton--Wente estimates, including
Theorem~\ref{thm:intro-wente}.  Section~\ref{sec:localized-newton} proves
the localized $L^1$-to-BMO bound and
Theorem~\ref{thm:intro-brezis-merle}.

\section{Dunkl setting and BMO classification}
\label{sec:dunkl-bmo}

\subsection{Basic Dunkl notation in the plane}

Let $R\subset\R^2\setminus\{0\}$ be a reduced finite root system,
let $R_+$ be a positive subsystem, and let $G$ be the associated reflection group. For $\alpha\in R$,
reflection across $H_\alpha=\alpha^\perp$ is given by
\[
 \sigma_\alpha x=x-2\frac{\langle x,\alpha\rangle}{|\alpha|^2}\alpha.
\]
For $E\subset\R^2$, write $\mathcal O(E)=\bigcup_{\sigma\in G}\sigma(E)$; in particular,
$\mathcal O(x)=\{\sigma(x):\sigma\in G\}$.

A multiplicity is a $G$-invariant map $k:R\to(0,\infty)$. As explained in the introduction, this
entails no loss of generality after the roots of zero multiplicity have been discarded. Put
\[
 \gamma=\sum_{\alpha\in R_+}k(\alpha),
 \qquad Q=2+2\gamma,
\]
and define
\[
 \mathcal H=\bigcup_{\alpha\in R}H_\alpha,
 \qquad \R^2_{\mathrm{reg}}=\R^2\setminus\mathcal H.
\]
We call each connected component of $\R^2_{\mathrm{reg}}$ an open Weyl
chamber, and we call its closure a closed Weyl chamber.
The Dunkl weight and measure are
\[
 w_k(x)=\prod_{\alpha\in R_+}|\langle\alpha,x\rangle|^{2k(\alpha)},
 \qquad d\mu_k(x)=w_k(x)\,dx.
\]
All implicit constants may depend on the fixed root system $R$ and multiplicity $k$.
Several sources used below state their results for normalized root systems,
with $|\alpha|^2=2$.  This entails no restriction here.  Set
\[
 \begin{aligned}
  \widetilde R
  &=\left\{\frac{\sqrt2\,\alpha}{|\alpha|}:\alpha\in R\right\},\\
  \widetilde k\left(\frac{\sqrt2\,\alpha}{|\alpha|}\right)
  &=k(\alpha),\\
  a_{R,k}
  &=\prod_{\alpha\in R_+}
    \left(\frac{\sqrt2}{|\alpha|}\right)^{2k(\alpha)}.
 \end{aligned}
\]
Then $\widetilde R$ is normalized and has the same reflection group as $R$,
and $d\mu_{\widetilde k}=a_{R,k}\,d\mu_k$.
The standard volume estimate \cite[(2.3)]{DHatomic} is
\begin{equation}\label{eq:volume-estimate}
 \mu_k(B(x,r))\simeq r^2\prod_{\alpha\in R_+}
 \bigl(|\langle\alpha,x\rangle|+r\bigr)^{2k(\alpha)}.
\end{equation}
In particular, $\mu_k$ is doubling,
\[
 w_k(rx)=r^{2\gamma}w_k(x),
 \qquad \mu_k(B(0,r))=r^Q\mu_k(B(0,1)),
\]
and $\mu_k(B(x,r))\gtrsim r^Q$ for every $x$ and $r>0$.

If $y\neq0$ lies on the reflecting line $H$ and $\kappa_H=k(\alpha)>0$ for $H=H_\alpha$, then locally
\begin{equation}\label{eq:local-weight-line}
 w_k(y+u)=c_y(u)|u_\perp|^{2\kappa_H},
\end{equation}
where $c_y$ is positive and smooth. If $y\in\R^2_{\mathrm{reg}}$, the weight is bounded above and below by positive
constants near $y$.

Unless explicitly stated otherwise, $\BMO(\mu_k)$ below is the Euclidean-ball space
\[
 \|f\|_{\BMO(\mu_k)}=
 \sup_B\frac1{\mu_k(B)}\int_B|f-f_B|\,d\mu_k,
 \qquad
 f_B=\frac1{\mu_k(B)}\int_Bf\,d\mu_k.
\]
The displayed quantity is a seminorm; whenever a BMO space is used as a
normed space, functions differing by a constant are identified.
For an open set $U\subset\R^2$, we write
$f\in\BMO_{\mathrm{loc}}(U,\mu_k)$ if
$f\in L^1_{\mathrm{loc}}(U,d\mu_k)$ and the supremum in the preceding
display, restricted to Euclidean balls $B\subset B_0$, is finite for every
Euclidean ball $B_0\Subset U$.
This weighted Euclidean BMO space is used throughout. The orbit-metric space $\BMOG$ is
introduced in Section~\ref{subsec:orbit-balance}.

\subsection{Dunkl operators and the Dunkl Laplacian}

The Dunkl operators were introduced
in \cite{Dunkl1989}. For $\xi\in\R^2$, they are given by
\[
 T_\xi f(x)=\partial_\xi f(x)+
 \sum_{\alpha\in R_+}k(\alpha)\langle\alpha,\xi\rangle
 \frac{f(x)-f(\sigma_\alpha x)}{\langle\alpha,x\rangle}.
\]
With respect to an orthonormal basis $(e_1,e_2)$, the Dunkl Laplacian is
\[
 \Delta_k=T_{e_1}^2+T_{e_2}^2.
\]
For sufficiently smooth functions one has the expanded form
\[
 \Delta_k f(x)=\Delta f(x)+\sum_{\alpha\in R_+}k(\alpha)
 \left(
 \frac{2\langle\nabla f(x),\alpha\rangle}{\langle\alpha,x\rangle}
 -|\alpha|^2\frac{f(x)-f(\sigma_\alpha x)}{\langle\alpha,x\rangle^2}
 \right).
\]
The operator is symmetric with respect to $d\mu_k$ on natural test classes.
Since the defining quotient in $T_\xi$ is unchanged when a root is multiplied
by a positive scalar,
\[
 T_\xi^{\widetilde R,\widetilde k}=T_\xi^{R,k},
 \qquad
 \Delta_{\widetilde k}=\Delta_k.
\]

The intertwining operator $V_k$ is characterized on polynomials by
\[
 T_\xi V_k=V_k\partial_\xi,
 \qquad V_k1=1.
\]
Following \cite{GLR}, set $C(y)=\conv\mathcal O(y)$. R\"osler's positivity theorem \cite{RoslerPositivity} gives a probability
measure $\mu_y^k$, supported in $C(y)$, such that
\[
 V_kf(y)=\int_{C(y)}f(z)\,d\mu_y^k(z).
\]
The fine structure of $\mu_y^k$ is one of the main difficulties in obtaining sharp Green estimates for
general root systems.

\subsection{Dunkl heat kernel in the plane}

We use the notation $\Gamma_k(t,x,y)$ of \cite{GLR}; the same
kernel is denoted by $h_t(x,y)$ in \cite{ADH,DHheat}. For fixed $x\in\R^2$, the Dunkl kernel $y\mapsto E(x,y)$ is the
unique solution of
\[
\begin{cases}
	T_\xi f=\langle\xi,x\rangle f,& \forall\xi\in\R^2,\\
	f(0)=1.
\end{cases}
\]
The heat semigroup has the kernel representation \cite[Section~4]{RoslerHeat}
\[
H_tf(x)=e^{t\Delta_k}f(x)=\int_{\R^2}\Gamma_k(t,x,y)f(y)\,d\mu_k(y),
\]
where
\[
\Gamma_k(t,x,y)=c_k^{-1}(2t)^{-1-\gamma}
e^{-(|x|^2+|y|^2)/(4t)}
E\!\left(\frac{x}{\sqrt{2t}},\frac{y}{\sqrt{2t}}\right).
\]
Here $c_k=\int_{\R^2}e^{-|x|^2/2}\,d\mu_k(x)$.  Since
$c_{\widetilde k}=a_{R,k}c_k$, the heat kernel relative to
$d\mu_{\widetilde k}$ satisfies
\[
 \Gamma_k(t,x,y)=a_{R,k}\Gamma_{\widetilde k}(t,x,y).
\]
Consequently, the heat-kernel estimates in \cite{ADH,DHheat} transfer to the
present convention.  The same holds for the Hardy-space identification in
\cite{DHatomic}, the Riesz-commutator bounds in \cite{DHcommutators}, and the
fundamental-solution result in \cite{GallardoRejeb}.  The heat semigroup and
Riesz transforms are unchanged, while
\[
 \|f\|_{L^p(d\mu_{\widetilde k})}
 =a_{R,k}^{1/p}\|f\|_{L^p(d\mu_k)},
 \qquad
 \|b\|_{\BMO(\mu_{\widetilde k})}
 =\|b\|_{\BMO(\mu_k)}
\]
for $1\leq p<\infty$.  The nontangential maximal functions also coincide, so
the corresponding Hardy $H^1$ norms differ by the factor $a_{R,k}$.
Set
\[
d(x,y)=\min_{\sigma\in G}|x-\sigma(y)|.
\]
We use only the following consequences of the basic estimates in
\cite[Theorems~4.1 and~4.4]{ADH}.

\begin{theorem}\label{thm:basic-heat-kernel}
	There are $c,C>0$ such that, for $x,y\in\R^2$ and $t>0$,
	\begin{equation}\label{eq:basic-heat-size}
		\frac{c}{\mu_k(B(x,\sqrt t))}e^{-C|x-y|^2/t}
		\leq \Gamma_k(t,x,y)
		\leq \frac{C}{\mu_k(B(x,\sqrt t))}e^{-cd(x,y)^2/t}.
	\end{equation}
	Moreover, if $|y-y'|<\sqrt t/2$, then
	\begin{equation}\label{eq:basic-heat-regularity}
		|\Gamma_k(t,x,y)-\Gamma_k(t,x,y')|
		\leq C\frac{|y-y'|}{\sqrt t}\frac1{\mu_k(B(x,\sqrt t))}
		e^{-cd(x,y)^2/t}.
	\end{equation}
	The same estimate holds in the first spatial variable by symmetry. Here $G$ and $d$ are the
	reflection group and orbit distance fixed above.
\end{theorem}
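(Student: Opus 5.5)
The estimates in Theorem~\ref{thm:basic-heat-kernel} are not proved from scratch. They are stated for normalized root systems in \cite[Theorems~4.1 and~4.4]{ADH}, and we transfer them to the present normalization using the identity $\Gamma_k(t,x,y)=a_{R,k}\Gamma_{\widetilde k}(t,x,y)$ and the relation $d\mu_{\widetilde k}=a_{R,k}\,d\mu_k$. Passing from $(R,k)$ to $(\widetilde R,\widetilde k)$ does not change the reflection group. Hence the orbit distance $d$ is the same for both. By \eqref{eq:volume-estimate}, the ball volumes satisfy $\mu_{\widetilde k}(B(x,\sqrt t))=a_{R,k}\,\mu_k(B(x,\sqrt t))$. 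The factor $a_{R,k}$ therefore cancels in every quotient $\Gamma_k/\mu_k(B(x,\sqrt t))^{-1}$, and only the constants $c,C$ change.

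For the upper bound in \eqref{eq:basic-heat-size}, we take the Gaussian upper estimate of \cite[Theorem~4.1]{ADH}, $\Gamma(t,x,y)\lesssim \max\{\mu(B(x,\sqrt t)),\mu(B(y,\sqrt t))\}^{-1}e^{-cd(x,y)^2/t}$. We then replace the maximum by $\mu_k(B(x,\sqrt t))^{-1}$, which can only increase the bound. If the cited form has an extra factor such as $(1+|x-y|/\sqrt t)^{-2}$, we simply discard it, since it is at most one.

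For the lower bound, \cite{ADH} gives $\Gamma(t,x,y)\gtrsim \mu(B(x,\sqrt t))^{-1}e^{-C|x-y|^2/t}$. If the cited version involves $\mu(B(y,\sqrt t))$ or a maximum of the two volumes, we use doubling together with \eqref{eq:volume-estimate}:
\[
\mu_k(B(y,\sqrt t))\le \mu_k(B(x,\sqrt t+|x-y|))\lesssim (1+|x-y|/\sqrt t)^{Q}\mu_k(B(x,\sqrt t)).
\]
The polynomial loss $(1+|x-y|/\sqrt t)^{Q}$ is then absorbed into the Gaussian factor by enlarging $C$. The same absorption converts a $\mu_k(B(y,\sqrt t))$ normalization in the upper bound into one at $x$. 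In that case one uses $|x-y|\ge d(x,y)$ in the wrong direction, so instead we apply doubling with $d(x,y)$. The quantity $\mu_k(B(\cdot,\sqrt t))$ is $G$-invariant, so $\mu_k(B(y,\sqrt t))=\mu_k(B(\sigma y,\sqrt t))$. Choosing $\sigma$ with $|x-\sigma y|=d(x,y)$ gives the comparison with a factor $(1+d(x,y)/\sqrt t)^Q$, which is absorbed by shrinking $c$.

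The regularity estimate \eqref{eq:basic-heat-regularity} is the Hölder/Lipschitz estimate of \cite[Theorem~4.4]{ADH}, valid for $|y-y'|\le\sqrt t/2$. It is normalized in the same way, and the $G$-invariance argument above moves the volume factor to the point $x$. Symmetry in the first variable follows from $\Gamma_k(t,x,y)=\Gamma_k(t,y,x)$, which is evident from the symmetry $E(x,y)=E(y,x)$ in the explicit formula.

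The only step that needs care is this bookkeeping: interchanging the volume normalizations at $x$ and $y$ in the presence of the orbit distance rather than the Euclidean distance. The $G$-invariance of the weighted ball volume is what makes this exchange possible.
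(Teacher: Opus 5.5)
Your proposal is correct and is essentially the paper's argument. The paper also does not prove these bounds; it quotes \cite[Theorems~4.1 and~4.4]{ADH} after the normalization transfer $\Gamma_k=a_{R,k}\Gamma_{\widetilde k}$, $d\mu_{\widetilde k}=a_{R,k}\,d\mu_k$. Under this transfer $a_{R,k}$ cancels exactly, so the constants do not even change, and the identity $\mu_{\widetilde k}(B)=a_{R,k}\mu_k(B)$ comes from the definition of the measures rather than from \eqref{eq:volume-estimate}. Your extra bookkeeping is sound and simply makes explicit what the paper leaves implicit: you move a volume normalization from $y$ to $x$ using the $G$-invariance of $\mu_k(B(\cdot,\sqrt t))$ and polynomial doubling, so that the loss $(1+d(x,y)/\sqrt t)^{Q}$ is absorbed into $e^{-cd(x,y)^2/t}$.
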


\subsection{Newton and Green kernels}

From now on we assume $Q=2+2\gamma>2$. The global
Newton kernel is
\[
 N_k(x,y)=\int_0^\infty\Gamma_k(t,x,y)\,dt.
\]
The symmetry and $G$-covariance of the heat kernel pass to the Newton kernel:
\[
 N_k(x,y)=N_k(y,x),
 \qquad N_k(\sigma x,\sigma y)=N_k(x,y),
 \qquad \sigma\in G.
\]
In dimension two, the representation of Graczyk--Luks--R\"osler \cite{GLR} is
\[
 N_k(x,y)=C_k\int_{C(y)}
 \frac{d\mu_y^k(z)}{\bigl(|x|^2+|y|^2-2\langle x,z\rangle\bigr)^\gamma}.
\]
Here
\[
 C_k=\frac1{2\gamma d_k},
 \qquad d_k=\int_{S^1}w_k(\theta)\,d\sigma(\theta).
\]
Since $Q>2$, the defining heat integral converges at infinity, so $N_k$ is a genuine zero-
resolvent kernel.  Integrating the heat-kernel scaling above gives
$N_k=a_{R,k}N_{\widetilde k}$.  Hence
\cite[Theorem~6.1]{GallardoRejeb}, together with the symmetry of $N_k$, gives
the distributional fundamental-solution identity in the present normalization:
\begin{equation}\label{eq:fundamental-solution}
 \int_{\R^2}N_k(x,y)(-\Delta_k\varphi)(x)\,d\mu_k(x)=\varphi(y),
 \qquad \varphi\in C_c^\infty(\R^2).
\end{equation}
Thus no large-time subtraction or normalization is involved.

If $y=0$, then $C(y)=\{0\}$ and $\mu_0^k=\delta_0$. Therefore
\begin{equation}\label{eq:newton-origin}
 N_k(x,0)=C_k|x|^{-2\gamma}.
\end{equation}

\subsubsection{The unit-ball Green kernel}

Let $B=\{x\in\R^2:|x|<1\}$. Graczyk--Luks--R\"osler \cite[Theorem~3.1]{GLR} prove the exact identity
\begin{equation}\label{eq:newton-green}
 G_k(x,y)=N_k(x,y)-K_k[N_k(\cdot,y)](x),
\end{equation}
where, in dimension two,
\begin{equation}\label{eq:green-correction}
 K_k[N_k(\cdot,y)](x)=C_k\int_{C(y)}
 \frac{d\mu_y^k(z)}{\bigl(1+|x|^2|y|^2-2\langle x,z\rangle\bigr)^\gamma}.
\end{equation}
For fixed $y\in B$, the correction in \eqref{eq:green-correction} is bounded and $\Delta_k$-harmonic in $B$. Consequently $G_k(\cdot,y)$
and $N_k(\cdot,y)$ have the same local BMO classification at the pole. This exact decomposition is
the only Green-kernel input needed below. At the origin it gives
\[
 G_k(x,0)=C_k\bigl(|x|^{-2\gamma}-1\bigr),
 \qquad x\in B\setminus\{0\}.
\]

\subsection{Regular poles: a basic heat-kernel proof}

\begin{proposition}\label{prop:regular-poles-bmo}
If $a\in\R^2_{\mathrm{reg}}$, then $N_k(\cdot,a)\in\BMO(\mu_k)$. Moreover, for every compact set
$K\Subset\R^2_{\mathrm{reg}}$,
\begin{equation}\label{eq:uniform-regular-poles-bmo}
 \sup_{a\in K}\|N_k(\cdot,a)\|_{\BMO(\mu_k)}<\infty.
\end{equation}
\end{proposition}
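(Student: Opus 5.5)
We use only the heat-kernel bounds of Theorem~\ref{thm:basic-heat-kernel}, integrated in time, and split every Euclidean ball $B=B(x_0,r)$ according to its position relative to the orbit $\mathcal O(a)$. Fix $K\Subset\R^2_{\mathrm{reg}}$. Then there is $\delta>0$ such that, for all $a\in K$, the $|G|$ orbit points $\sigma a$ are $4\delta$-separated and lie at distance at least $4\delta$ from $\mathcal H$. Consequently $w_k\simeq1$ on $B(\sigma a,2\delta)$, uniformly in $a\in K$, and $\mu_k(B(x,s))\simeq s^2$ for $x\in B(\sigma a,\delta)$ and $s\le\delta$. All constants below depend only on $K$, $R$ and $k$.

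\emph{Step 1 (logarithmic majorant).} Integrate the upper bound in \eqref{eq:basic-heat-size}, using $\mu_k(B(x,\sqrt t))\gtrsim t^{Q/2}$ for large $t$ and $\mu_k(B(x,\sqrt t))\simeq t$ for $t\le\delta^2$ near the orbit. This gives
\[
 N_k(x,a)\lesssim 1+\log_+\frac{1}{d(x,a)},
\]
and $N_k(x,a)\lesssim d(x,a)^{2-Q}$ for $d(x,a)\ge\delta$. Similarly, integrate the regularity estimate \eqref{eq:basic-heat-regularity} in the first variable. Split the time integral at $t=4|x-x'|^2$: for $t\ge 4|x-x'|^2$ use the regularity bound, and for smaller $t$ use the size bound for each point. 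For $|x-x'|\le d(x,a)/2$ this yields
\[
 |N_k(x,a)-N_k(x',a)|\lesssim \frac{|x-x'|}{d(x,a)}\cdot\frac{d(x,a)^2}{\mu_k(B(x,d(x,a)))}.
\]
The second factor is $\lesssim1$ when $d(x,a)\le\delta$ and decays when $d(x,a)$ is large.

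\emph{Step 2 (case split on balls).} \emph{Case (A):} $d(x_0,a)\ge 2r$. Then every $x\in B$ satisfies $d(x,a)\ge d(x_0,a)/2\ge r$. Step 1 gives oscillation on $B$ at most $\lesssim r/d(x_0,a)\cdot\sup_B d^2/\mu_k(B(x,d))\lesssim1$, hence $\frac1{\mu_k(B)}\int_B|f-f(x_0)|\,d\mu_k\lesssim1$.

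\emph{Case (B):} $d(x_0,a)<2r$ and $r\le\delta/4$. Then $B$ lies in $B(\sigma a,\delta)$ for exactly one $\sigma$, where $w_k\simeq1$ and $N_k(x,a)\simeq\log(1/|x-\sigma a|)+O(1)$. More precisely, we need oscillation control and not only the upper bound, so we compare $f$ with the constant $\log(1/r)$:
\[
 \frac1{|B|}\int_B\Bigl|N_k(x,a)-C\log\tfrac1r\Bigr|\,dx\lesssim1 .
\]
To get this, bound $N_k(x,a)\ge0$ from above by $\log(r/|x-\sigma a|)+\log(1/r)+C$, and from below on most of $B$ by the lower heat bound, which gives $N_k(x,a)\gtrsim\int_{|x-\sigma a|^2}^{\delta^2}t^{-1}\,dt$. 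The lower-bound constants $c,C$ differ from the upper ones, and this mismatch is the obstacle. We therefore avoid comparing with $\log$ directly. Instead we use a dyadic annuli argument: on annuli $A_j=\{2^{-j-1}r\le|x-\sigma a|<2^{-j}r\}$, Step 1's regularity bound controls the oscillation by $O(1)$ plus a telescoping sum of averages between adjacent annuli, each of size $O(1)$ by Case (A). This gives $|f_{A_j}-f_{A_0}|\lesssim j$, and summing $j\,2^{-2j}$ is finite. The crude majorant handles the region inside $B(\sigma a,2^{-J}r)$.

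\emph{Case (C):} $r>\delta/4$. On $B$ the function is bounded outside the $\delta$-neighbourhoods of the orbit, by Step 1. The neighbourhoods have $\mu_k$-mass $\simeq\delta^2$ and there $\int N_k\,d\mu_k\lesssim\delta^2\log(1/\delta)$. Since $\mu_k(B)\gtrsim r^Q\gtrsim\delta^Q$, it follows that $\frac1{\mu_k(B)}\int_B N_k\,d\mu_k\lesssim1$ whenever $r\lesssim1$. For large $r$, $N_k\lesssim\min(1+\log_+,d^{2-Q})$ is integrable against $\mu_k$ on $B(0,2r)$ with total mass $\lesssim r^2\ll r^Q$, so the average tends to zero.

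Every constant depends only on $\delta$, which gives \eqref{eq:uniform-regular-poles-bmo}. The main obstacle is Case (B), the oscillation near the logarithmic pole, where only two-sided estimates with unequal constants are available. The telescoping dyadic argument is designed to circumvent this.
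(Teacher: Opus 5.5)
Your proof is correct, and its skeleton matches the paper's: a logarithmic majorant from the upper Gaussian bound, a split according to the size and position of the ball, and positivity together with $\mu_k(B)\gtrsim r^Q$ for large balls. The genuine difference is how you treat small balls.

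The paper never derives a regularity estimate for $N_k$ itself. Small balls away from the orbit are handled by $\sup_B N_k(\cdot,a)\lesssim1$. For a small ball $B(z,r)$ near an orbit point $b$, the paper compares $N_k(\cdot,a)$ with the large-time part of the kernel at the centre, $c_B=\int_{4r^2}^\infty\Gamma_k(t,z,a)\,dt$. The short-time size bound and the heat-kernel regularity for $t\geq4r^2$ then give directly $|N_k(y,a)-c_B|\lesssim1+\log^+(2r/|y-b|)$, and bounded oscillation follows after one integration. The mismatch between upper and lower Gaussian constants that worried you never arises, because no lower bound is used.

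You instead integrate the heat-kernel regularity into the Lipschitz-type bound $|N_k(x,a)-N_k(x',a)|\lesssim|x-x'|/d(x,a)$ for $|x-x'|\leq d(x,a)/2$. This is uniform because $d^2/\mu_k(B(x,d))$ is bounded, so it settles every ball with $d(x_0,a)\geq2r$ at once. Near the pole you then run the classical chaining argument over dyadic annuli for functions with $|\nabla f|\lesssim1/|x-b|$. This costs more bookkeeping than the paper's choice of $c_B$. In return it gives a regularity estimate for $N_k(\cdot,a)$ valid at all points, a global-in-$x$ version of Proposition~\ref{prop:off-diagonal-regularity}, which the paper proves later and only inside a chamber.

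Two small repairs are needed in Case (B). First, $B\subset B(\sigma a,3r)$ rather than $B(\sigma a,r)$, so your annuli should start at radius $3r$. Second, do not switch to the crude majorant on an inner ball $B(\sigma a,2^{-J}r)$ with $J$ fixed. Against the constant $f_{A_0}$, which is of size $\log(1/r)$, that leaves a term of order $4^{-J}\log(1/r)$, which is not uniform in $r$. Simply chain over all $j\geq0$; every point other than $\sigma a$ lies in some annulus, and $\sum_j(1+j)4^{-j}<\infty$.
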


\begin{proof}
Since $a\in\R^2_{\mathrm{reg}}$, choose $\rho_a>0$ so that the balls $B(b,4\rho_a)$, $b\in\mathcal O(a)$, are pairwise disjoint
and stay a positive distance from $\mathcal H$. On their union, $w_k$ is bounded above and below by
positive constants.

By symmetry, the upper bound in \eqref{eq:basic-heat-size} and the volume estimate \eqref{eq:volume-estimate} give
\begin{equation}\label{eq:newton-log-majorant}
 0\leq N_k(y,a)\leq C_a\left(1+\log^+\frac{\rho_a}{d(y,a)}\right),
 \qquad d(y,a)=\min_{\sigma\in G}|y-\sigma(a)|.
\end{equation}
Indeed, the integral over $0<t<\rho_a^2$ is bounded by
\[
 C_a\int_0^{\rho_a^2}t^{-1}e^{-cd(y,a)^2/t}\,dt,
\]
while the integral over $\rho_a^2\leq t<\infty$ is finite because
$\mu_k(B(a,\sqrt t))\gtrsim t^{Q/2}$ and $Q>2$.

Let $B=B(z,r)$. First suppose $r<\rho_a/16$. If $B$ does not meet
$\bigcup_{b\in\mathcal O(a)}B(b,\rho_a/8)$, then \eqref{eq:newton-log-majorant} bounds $N_k(\cdot,a)$ on $B$, and hence
\[
 \frac1{\mu_k(B)}\int_B|N_k(\cdot,a)-(N_k(\cdot,a))_B|\,d\mu_k
 \leq 2\sup_BN_k(\cdot,a)\leq C_a.
\]
Otherwise $B\subset B(b,\rho_a/4)$ for one $b\in\mathcal O(a)$. Set
\[
 c_B=\int_{4r^2}^\infty\Gamma_k(t,z,a)\,dt.
\]
For $y\in B$, the size and spatial estimates give
\begin{align*}
 |N_k(y,a)-c_B|
 &\leq \int_0^{4r^2}\Gamma_k(t,y,a)\,dt
     +\int_{4r^2}^\infty|\Gamma_k(t,y,a)-\Gamma_k(t,z,a)|\,dt\\
 &\leq C_a\left(1+\log^+\frac{2r}{|y-b|}\right).
\end{align*}
Here $d(y,a)=|y-b|$ on $B(b,\rho_a/4)$; for the second integral,
$|y-z|<r\leq\sqrt t/2$, and
\[
 r\int_{4r^2}^{\rho_a^2}t^{-3/2}\,dt
 +r\int_{\rho_a^2}^\infty t^{-(Q+1)/2}\,dt\leq C_a.
\]
Since $w_k$ is comparable with a positive constant on this neighborhood,
\[
 \int_B\log^+\frac{2r}{|y-b|}\,d\mu_k(y)\leq C_a\mu_k(B).
\]
Therefore
\begin{align*}
 &\frac1{\mu_k(B)}
   \int_B|N_k(\cdot,a)-(N_k(\cdot,a))_B|\,d\mu_k\\
 &\qquad\leq \frac2{\mu_k(B)}
   \int_B|N_k(\cdot,a)-c_B|\,d\mu_k
 \leq C_a.
\end{align*}

It remains to consider $r\geq\rho_a/16$. Since the logarithmic term in
\eqref{eq:newton-log-majorant} is supported in the finite union
$\bigcup_{b\in\mathcal O(a)}B(b,\rho_a)$ and the weight is bounded there,
\[
 \int_{\R^2}\log^+\frac{\rho_a}{d(y,a)}\,d\mu_k(y)\leq C_a.
\]
Since $r\geq\rho_a/16$, positivity and \eqref{eq:newton-log-majorant} give
\[
 \int_BN_k(y,a)\,d\mu_k(y)\leq C_a\mu_k(B)+C_a.
\]
Hence
\[
 \frac1{\mu_k(B)}\int_B|N_k(\cdot,a)-(N_k(\cdot,a))_B|\,d\mu_k
 \leq\frac2{\mu_k(B)}\int_BN_k(\cdot,a)\,d\mu_k\leq C_a.
\]

If $a$ ranges in a compact $K\Subset\R^2_{\mathrm{reg}}$, the quantities
$\dist(K,\mathcal H)$ and $\min_{a\in K,\,\sigma\neq e}|a-\sigma(a)|$ are
positive. Thus the radius, the local weight bounds, and all constants above can be chosen
uniformly. This proves \eqref{eq:uniform-regular-poles-bmo}.
\end{proof}

As a consequence, we may prove the following pointwise bounds of the Newton kernel.

\begin{corollary}\label{cor:regular-pole-log-bounds}
Let $a\in\R^2_{\mathrm{reg}}$. There are $\rho_a,c_a,C_a>0$ such that
\[
 c_a\log\frac{\rho_a}{|x-a|}-C_a
 \leq N_k(x,a)
 \leq C_a\left(1+\log\frac{\rho_a}{|x-a|}\right)
\]
whenever $0<|x-a|<\rho_a/4$.
\end{corollary}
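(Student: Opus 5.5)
The plan is to obtain both inequalities directly from the time integral $N_k(x,a)=\int_0^\infty\Gamma_k(t,x,a)\,dt$, using the two-sided Gaussian bounds \eqref{eq:basic-heat-size}. We take $\rho_a$ as in the proof of Proposition~\ref{prop:regular-poles-bmo}, shrinking it if necessary. Then the balls $B(b,4\rho_a)$, $b\in\mathcal O(a)$, are pairwise disjoint, they stay away from $\mathcal H$, and $w_k\simeq1$ on their union.

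\textbf{Upper bound.} This is already contained in \eqref{eq:newton-log-majorant}. Suppose $|x-a|<\rho_a/4$. For $\sigma(a)\neq a$ we have $|x-\sigma(a)|\geq|a-\sigma(a)|-|x-a|\geq 8\rho_a-\rho_a/4>|x-a|$, because the balls of radius $4\rho_a$ about distinct orbit points are disjoint. Hence $d(x,a)=|x-a|$. Moreover $\rho_a/|x-a|>4$, so $\log^+$ coincides with $\log$, and the upper inequality follows.

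\textbf{Lower bound.} We use the lower estimate in \eqref{eq:basic-heat-size}, written with the pole $a$ in the first slot after invoking symmetry. This gives $\Gamma_k(t,x,a)\geq c\,\mu_k(B(a,\sqrt t))^{-1}e^{-C|x-a|^2/t}$. Since $\Gamma_k\geq0$, we restrict the time integral to $|x-a|^2\leq t\leq\rho_a^2$. On this range $\sqrt t\leq\rho_a$, so by \eqref{eq:volume-estimate} and the choice of $\rho_a$ we have $\mu_k(B(a,\sqrt t))\simeq t\,w_k(a)\simeq_a t$. The exponential is at least $e^{-C}$. It follows that
\[
 N_k(x,a)\geq c_a\int_{|x-a|^2}^{\rho_a^2}\frac{dt}{t}
 =2c_a\log\frac{\rho_a}{|x-a|},
\]
which is even stronger than needed, with no subtracted constant. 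The term $-C_a$ in the statement only gives room, for example if one prefers to apply the lower bound with $\mu_k(B(x,\sqrt t))$ centred at $x$. In that case one checks that $\mu_k(B(x,\sqrt t))\simeq t$ as well, since $B(x,\sqrt t)\subset B(a,2\rho_a)$, where the weight is comparable to a constant.

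\textbf{Main obstacle.} The only delicate point is the normalization. We must confirm that the volume in \eqref{eq:basic-heat-size} is comparable to $t$ uniformly for $t\leq\rho_a^2$ near the regular point, and that $d(x,a)=|x-a|$ in the stated range. Both follow from the separation built into the choice of $\rho_a$, so no genuine difficulty is expected. The constants can be chosen uniformly for $a$ in a compact subset of $\R^2_{\mathrm{reg}}$, exactly as in the proof of Proposition~\ref{prop:regular-poles-bmo}.
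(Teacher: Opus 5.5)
Your proposal is correct and follows the paper's argument. The upper bound is read off from \eqref{eq:newton-log-majorant} once $d(x,a)=|x-a|$ near $a$. The lower bound comes from integrating the Euclidean Gaussian lower bound in \eqref{eq:basic-heat-size} against $\mu_k(B(a,\sqrt t))\simeq t$ over a time interval ending at $\rho_a^2$. The only difference is cosmetic: the paper starts this interval at $16|x-a|^2$ and so picks up the harmless constant $-C_a$, whereas your cutoff at $|x-a|^2$ avoids it.
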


\begin{proof}
The upper bound is \eqref{eq:newton-log-majorant}; near $a$ the orbit distance equals $|x-a|$. Shrink $\rho_a$ so
that $\mu_k(B(a,\sqrt t))\simeq t$ for $0<t<\rho_a^2$, with comparison constants depending on $a$. Put
$r=|x-a|<\rho_a/4$. By symmetry and the Euclidean Gaussian lower bound in \eqref{eq:basic-heat-size},
\begin{align*}
 N_k(x,a)
 &\geq c_a\int_{16r^2}^{\rho_a^2}\frac1t e^{-Cr^2/t}\,dt
 \geq c_a\int_{16r^2}^{\rho_a^2}\frac{dt}{t}\\
 &=2c_a\log\frac{\rho_a}{4r}
 \geq c_a'\log\frac{\rho_a}{r}-C_a.
\end{align*}
In particular, $N_k(x,a)\to\infty$ as $x\to a$, with logarithmic order.
\end{proof}

\subsection{Poles on reflecting lines: the non-BMO result}

\begin{theorem}\label{thm:singular-poles-not-bmo}
If $y\in\mathcal H$, then $N_k(\cdot,y)\notin\BMO(\mu_k)$.
\end{theorem}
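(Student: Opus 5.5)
We argue by contradiction via a lower bound on small-ball averages near the pole, combined with a John--Nirenberg-type growth bound for $\BMO(\mu_k)$ functions on doubling spaces. Two cases arise. If $y=0$, then \eqref{eq:newton-origin} gives $N_k(x,0)=C_k|x|^{-2\gamma}$ explicitly. If $y\in\mathcal H\setminus\{0\}$, we use the Euclidean Gaussian lower bound in \eqref{eq:basic-heat-size} together with the volume estimate \eqref{eq:volume-estimate}. For small $r>0$ one has $\mu_k(B(y,\sqrt t))\simeq t\cdot t^{\kappa_H}$ when $t<\rho_y^2$, because exactly one factor $|\langle\alpha,y\rangle|$ vanishes and the others are bounded below near $y$. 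Hence, for $|x-y|\le r$,
\[
 N_k(x,y)\ \ge\ c\int_{r^2}^{\rho_y^2}t^{-1-\kappa_H}e^{-C r^2/t}\,dt\ \gtrsim\ r^{-2\kappa_H}
\]
for $r$ small. Writing $s=2\gamma$ at the origin and $s=2\kappa_H$ on a line, in both cases $N_k(\cdot,y)\ge c\,r^{-s}$ on $B(y,r)$ with $s>0$.

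Next we compare averages on dyadic balls. Put $B_j=B(y,2^{-j})$ and $m_j=(N_k(\cdot,y))_{B_j}$. For $f\in\BMO(\mu_k)$, doubling gives the standard bound $|f_{B_{j+1}}-f_{B_j}|\lesssim\|f\|_{\BMO}$, and therefore $m_j\le m_0+Cj\|f\|_{\BMO}$. This growth is only linear in $j$. On the other hand, the pointwise lower bound gives $m_j\ge c\,2^{js}$, which grows exponentially in $j$. The two are incompatible as $j\to\infty$, so $N_k(\cdot,y)\notin\BMO(\mu_k)$.

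We also need $N_k(\cdot,y)\in L^1_{\mathrm{loc}}(d\mu_k)$, so that the averages $m_j$ make sense and the argument is not vacuous. If instead $N_k(\cdot,y)$ failed to be locally integrable, it would not be in BMO at all and the theorem would hold trivially. Local integrability does hold: the upper bound in \eqref{eq:basic-heat-size} integrated in $t$ gives $N_k(x,y)\lesssim\int_0^\infty\mu_k(B(x,\sqrt t))^{-1}e^{-cd(x,y)^2/t}\,dt$, which is integrable against $d\mu_k$ near $y$. At the origin this can be checked directly, since $|x|^{-2\gamma}w_k$ is homogeneous of degree $0$ and hence locally integrable in the plane.

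The main obstacle is the lower bound at a pole on a line. The time integral must be cut at $t\simeq r^2$, and the volume estimate must be seen to behave like $t^{1+\kappa_H}$ uniformly for $x\in B(y,r)$. The volume should be evaluated at the centre $x$, as in \eqref{eq:basic-heat-size}. For $|x-y|\le r\le\sqrt t$ we have $|\langle\alpha,x\rangle|\lesssim\sqrt t$ for the vanishing root, so $\mu_k(B(x,\sqrt t))\lesssim t^{1+\kappa_H}$. This upper volume bound is exactly what the lower heat-kernel estimate requires.
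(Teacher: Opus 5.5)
Your proof is correct and is essentially the paper's argument. You derive a power-type lower bound $N_k(\cdot,y)\gtrsim r^{-s}$ on $B(y,r)$, explicitly at the origin and, on a line, from the Euclidean Gaussian lower bound together with $\mu_k(B(x,\sqrt t))\lesssim t^{1+\kappa_H}$ for $|x-y|\le r\le\sqrt t$. This contradicts the at most logarithmic growth of concentric dyadic averages of a $\BMO(\mu_k)$ function on a doubling space.

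Your paragraph on local integrability is harmless but unnecessary. Membership in $\BMO(\mu_k)$ already presupposes $L^1_{\mathrm{loc}}$, so the averages $m_j$ are finite under the contradiction hypothesis.
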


\begin{proof}
At the origin, \eqref{eq:newton-origin} and homogeneity give
\begin{equation}\label{eq:origin-average-growth}
 \frac1{\mu_k(B(0,r))}\int_{B(0,r)}N_k(x,0)\,d\mu_k(x)=cr^{-2\gamma}.
\end{equation}
Now let $y\neq0$ lie on the reflecting line $H$ and write $\kappa_H>0$ for its multiplicity. By
\eqref{eq:local-weight-line},
\[
 \mu_k(B(y,r))\simeq r^{2+2\kappa_H}
\]
for small $r$. If $\rho=|z-y|$ is small, then the lower estimate in \eqref{eq:basic-heat-size}, integrated only over
$\rho^2<t<2\rho^2$, gives
\[
 N_k(z,y)\geq c\int_{\rho^2}^{2\rho^2}t^{-1-\kappa_H}\,dt
 \geq c\rho^{-2\kappa_H}.
\]
Using the local form of the weight and scaling in $u=z-y$, we obtain
\begin{equation}\label{eq:line-average-growth}
 \frac1{\mu_k(B(y,r))}\int_{B(y,r)}N_k(z,y)\,d\mu_k(z)
 \geq cr^{-2\kappa_H}.
\end{equation}
For a nonnegative BMO function on a doubling space, telescoping the averages over concentric dyadic balls gives
\[
 |F_{B(y,r)}-F_{B(y,R)}|
 \leq C\left(1+\log\frac Rr\right)\|F\|_{\BMO}
 \qquad(0<r<R).
\]
The power growth obtained above contradicts this logarithmic bound.
\end{proof}

\begin{corollary}\label{cor:bmo-norm-lower-bound}
For $a\in\R^2_{\mathrm{reg}}$,
\begin{equation}\label{eq:bmo-norm-lower-bound}
 \|N_k(\cdot,a)\|_{\BMO(\mu_k)}\geq\frac c{w_k(a)}.
\end{equation}
Consequently the norm tends to infinity along every sequence of regular points $a_j$ for which
$w_k(a_j)\to0$.
\end{corollary}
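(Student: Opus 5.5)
The plan is to compare small-ball averages of $F=N_k(\cdot,a)$ at two scales around the pole $a$, and to use the logarithmic telescoping bound from the proof of Theorem~\ref{thm:singular-poles-not-bmo}:
\[
 |F_{B(a,r)}-F_{B(a,R)}|\leq C\bigl(1+\log(R/r)\bigr)\|F\|_{\BMO(\mu_k)}.
\]
Choose $R=\delta_a\simeq\min_\alpha|\langle\alpha,a\rangle|$, a fixed multiple of $\dist(a,\mathcal H)$, and $r=\theta R$ with $\theta\in(0,1)$ a small absolute constant. Then the logarithmic factor is an absolute constant, and it suffices to show
\[
 F_{B(a,\theta R)}-F_{B(a,R)}\gtrsim \frac1{w_k(a)}.
\]

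For the lower bound on the small ball, take $x\in B(a,\theta R)$ and integrate the lower Gaussian bound in \eqref{eq:basic-heat-size} over $t\in(\theta^2R^2,R^2)$. On this range \eqref{eq:volume-estimate} gives $\mu_k(B(x,\sqrt t))\simeq t\,w_k(a)$, because $|\langle\alpha,x\rangle|\simeq|\langle\alpha,a\rangle|\gtrsim\sqrt t$ once $R$ is a small multiple of the distance to $\mathcal H$. Hence
\[
 N_k(x,a)\geq \frac{c}{w_k(a)}\int_{\theta^2R^2}^{R^2}\frac{dt}{t}\,e^{-C\theta^2R^2/t}\geq \frac{c\log(1/\theta)}{w_k(a)}.
\]
So $F_{B(a,\theta R)}\geq c\log(1/\theta)/w_k(a)$.

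For the upper bound on the large ball, split the heat integral at $t=R^2$ and treat the two pieces separately.
\begin{itemize}
\item For $t<R^2$, the upper bound in \eqref{eq:basic-heat-size} with $\mu_k(B(x,\sqrt t))\simeq t\,w_k(a)$ gives a contribution of at most
\[
 \frac{C}{w_k(a)}\Bigl(1+\log^+\frac{R}{d(x,a)}\Bigr).
\]
Its $\mu_k$-average over $B(a,R)$ is $\lesssim 1/w_k(a)$, since the weight is essentially constant there.
\item For $t\geq R^2$, bound the tail $\int_{R^2}^\infty\Gamma_k(t,x,a)\,dt$ by the same quantity $T$ at every $x\in B(a,\theta R)\cup B(a,R)$. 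This uses \eqref{eq:basic-heat-regularity}: the variation in $x$ is $\lesssim\int_{R^2}^\infty \frac{R}{\sqrt t}\,\frac{dt}{\mu_k(B(a,\sqrt t))}\lesssim 1/w_k(a)$, because $\mu_k(B(a,\sqrt t))\gtrsim t\,w_k(a)$ for $t\geq R^2$ by \eqref{eq:volume-estimate}. The common value $T$ cancels in the difference of the two averages.
\end{itemize}
This gives
\[
 F_{B(a,\theta R)}-F_{B(a,R)}\geq \frac{c\log(1/\theta)-C}{w_k(a)}.
\]
Fixing $\theta$ small makes the right-hand side $\gtrsim 1/w_k(a)$, which proves \eqref{eq:bmo-norm-lower-bound}.

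The main obstacle is keeping every constant independent of $a$. The lower-bound constant, the small-$t$ upper-bound constant and the tail-variation constant must all scale like $1/w_k(a)$ and no worse, and the subtracted constant $C$ must stay fixed as $\theta\to0$. The lower-bound constant grows like $\log(1/\theta)$, while the upper-bound constants do not depend on $\theta$, so a fixed small $\theta$ wins. The sequence statement follows directly from \eqref{eq:bmo-norm-lower-bound}.
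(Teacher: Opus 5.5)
Your argument is correct, but it is organized differently from the paper's proof.

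\textbf{The paper's route.} The paper uses only the lower bound. With $B_j=B(a,2^{-j}r_a)$ and $m_j$ the $\mu_k$-average of $N_k(\cdot,a)$ over $B_j$, the lower Gaussian estimate gives $m_j\geq cj/w_k(a)$. Dyadic telescoping gives $|m_j-m_0|\leq Cj\|N_k(\cdot,a)\|_{\BMO(\mu_k)}$. Dividing by $j$ and letting $j\to\infty$ at fixed $a$ removes $m_0$. That quantity is finite, but it is not controlled uniformly in $a$, since it contains the large-time tail. So the paper never needs an upper bound for $N_k$ on the large ball, and it never uses the regularity estimate.

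\textbf{Your route.} You compare two fixed scales, $\theta R$ and $R\simeq\dist(a,\mathcal H)$. You therefore must bound $F_{B(a,R)}$ from above with error of order $1/w_k(a)$. This is where the upper Gaussian bound and the tail-variation estimate come in, and it is why the uniformity bookkeeping you flag as the main obstacle becomes necessary. In exchange you obtain a more localized statement: the oscillation between an explicit pair of concentric balls at scale comparable to $\dist(a,\mathcal H)$ is already $\gtrsim 1/w_k(a)$.

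You could have avoided all the upper bounds with the paper's device. Fix $a$, let $\theta\to0$, and divide by $\log(1/\theta)$; the finite but $a$-dependent term $F_{B(a,R)}$ then disappears.

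\textbf{One small repair.} Estimate \eqref{eq:basic-heat-regularity} requires $|x-a|<\sqrt t/2$. This fails for $x\in B(a,R)$ with $|x-a|\geq R/2$ when $R^2\leq t<4|x-a|^2$. Either of two fixes works:
\begin{itemize}
\item Split the heat integral at $4R^2$ instead of $R^2$, shrinking $R$ so that $\mu_k(B(x,\sqrt t))\simeq t\,w_k(a)$ still holds for $t<4R^2$.
\item Handle the window $R^2\leq t\leq 4R^2$ with the size estimate. Since $\mu_k(B(x,\sqrt t))\gtrsim t\,w_k(a)$ for all $t$ and all $x\in B(a,R)$, this costs only another $O(1/w_k(a))$.
\end{itemize}
With either fix, your inequality $F_{B(a,\theta R)}-F_{B(a,R)}\geq (c\log(1/\theta)-C)/w_k(a)$ holds as stated.
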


\begin{proof}
Choose $r_a>0$ smaller than a fixed multiple of $\dist(a,\mathcal H)$. Then $w_k\simeq w_k(a)$ on $B(a,r_a)$.
The lower heat-kernel estimate gives
\[
 N_k(y,a)\geq\frac c{w_k(a)}\log\frac{r_a}{|y-a|},
 \qquad |y-a|<r_a/4.
\]
If $B_j=B(a,2^{-j}r_a)$ and $m_j=(N_k(\cdot,a))_{B_j}$, then
$m_j\geq cj/w_k(a)$ for all large $j$. On the other
hand, the doubling property and the BMO definition give
$|m_j-m_0|\leq Cj\|N_k(\cdot,a)\|_{\BMO(\mu_k)}$.
Divide by $j$ and let $j\to\infty$ to obtain \eqref{eq:bmo-norm-lower-bound}.
\end{proof}

\begin{corollary}\label{cor:green-newton-local-bmo}
For every $y\in B$,
\[
 G_k(\cdot,y)\in\BMO_{\mathrm{loc}}(B,\mu_k)
 \quad\Longleftrightarrow\quad
 N_k(\cdot,y)\in\BMO_{\mathrm{loc}}(B,\mu_k).
\]
\end{corollary}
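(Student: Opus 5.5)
The plan is to use the exact decomposition \eqref{eq:newton-green}, $G_k(\cdot,y)=N_k(\cdot,y)-h_y$ with $h_y=K_k[N_k(\cdot,y)]$. Since $\BMO_{\mathrm{loc}}(B,\mu_k)$ is a vector space (the local oscillation seminorm is subadditive), both implications follow once we know $h_y\in\BMO_{\mathrm{loc}}(B,\mu_k)$. That in turn follows from $h_y\in L^\infty_{\mathrm{loc}}(B)$: for any Euclidean ball $B'\subset B_0\Subset B$,
\[
 \frac1{\mu_k(B')}\int_{B'}|h_y-(h_y)_{B'}|\,d\mu_k\leq 2\sup_{B_0}|h_y|.
\]
Local integrability transfers in the same way. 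If $N_k(\cdot,y)\in L^1_{\mathrm{loc}}(B,d\mu_k)$ and $h_y$ is locally bounded, then $G_k(\cdot,y)\in L^1_{\mathrm{loc}}(B,d\mu_k)$, and conversely.

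The key step is a concrete bound on $h_y$ from \eqref{eq:green-correction}. For $x\in \overline{B_0}$ with $|x|\leq s<1$ and $z\in C(y)$, we have $|z|\leq|y|$ because $C(y)$ is the convex hull of the orbit and $G$ acts by orthogonal maps. Hence
\[
 1+|x|^2|y|^2-2\langle x,z\rangle\geq 1+|x|^2|y|^2-2|x||y|=(1-|x||y|)^2\geq(1-s)^2>0.
\]
Since $\mu_y^k$ is a probability measure, this gives $0\leq h_y(x)\leq C_k(1-s)^{-2\gamma}$ uniformly on $B_0$, with a constant depending only on $B_0$. This uses nothing beyond $|y|<1$, and it does not need the harmonicity of the correction.

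The main point requiring care is bookkeeping rather than estimation. $\BMO_{\mathrm{loc}}(B,\mu_k)$ is defined through balls $B'\subset B_0\Subset B$, so the bound above must be uniform on each such $B_0$, and the display shows that it is. One should also check that $N_k(\cdot,y)$ is finite almost everywhere, so that the difference is well defined $\mu_k$-a.e. This is automatic, because $N_k(\cdot,y)$ is a positive integral of the heat kernel and is finite away from $\mathcal O(y)$, which is a finite set.
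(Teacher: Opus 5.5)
Your proposal is correct and is essentially the paper's argument: use the decomposition \eqref{eq:newton-green}, note that the correction $K_k[N_k(\cdot,y)]$ is bounded, and observe that subtracting a (locally) bounded function neither creates nor destroys local BMO or local integrability. Your lower bound $1+|x|^2|y|^2-2\langle x,z\rangle\geq(1-|x||y|)^2$ for $z\in C(y)$ makes explicit the boundedness that the paper only asserts, and in fact gives the uniform bound $C_k(1-|y|)^{-2\gamma}$ on all of $B$.
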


\begin{proof}
The correction term in \eqref{eq:newton-green} is bounded in $B$. Subtracting a bounded function neither
creates nor removes local BMO.
\end{proof}

\begin{proof}[Proof of Theorem~\ref{thm:main-classification}]
The Newton-kernel assertions follow from \eqref{eq:newton-origin}, Theorem~\ref{thm:singular-poles-not-bmo}, and Proposition~\ref{prop:regular-poles-bmo}. The estimates \eqref{eq:origin-average-growth} and \eqref{eq:line-average-growth} show that the failures of BMO at singular poles are
local. The Green-kernel assertions now follow from Corollary~\ref{cor:green-newton-local-bmo}.
\end{proof}

\subsection{Refined estimates at reflected orbit points}

The refined factor is introduced only
now, to distinguish an actual pole from its reflected orbit points. Following \cite[(1.6)--(1.7)]{DHheat},
let $n(x,y)\in\{0,1,2\}$ be the minimal number of reflections needed to move $y$ into a closed
Weyl chamber containing $x$, and let $\Lambda(x,y,t)$ be the Dziuba\'nski--Hejna factor defined there.
In the irreducible full-rank planar case, \cite[Theorem~1.2]{DHheat} gives
an equivalent two-sided heat-kernel estimate with a simpler three-case
factor $\Lambda_D$.  For a rank-one root system only the cases
$n(x,y)=0,1$ occur.  We continue to use the general factor $\Lambda$ below.
We need only the following
direct consequence of its definition. If $n(x,y)\geq1$, every admissible sequence is nonempty,
so every summand contains the factor $(1+|x-y|/\sqrt t)^{-2}$. Consequently, if $n(x,y)\geq1$ and
$|x-y|\geq c_0>0$, then for $0<t\leq1$,
\begin{equation}\label{eq:lambda-reflected-bound}
 \Lambda(x,y,t)\leq C_{c_0}t.
\end{equation}
We shall use the following sharp two-sided estimates of Dziuba\'nski and Hejna \cite[Theorem~1.1]{DHheat}.

\begin{theorem}\label{thm:refined-heat-kernel}
For every $0<c_u<1/4<c_l<\infty$, there are $C_u,C_l>0$ such that
\begin{equation}\label{eq:refined-heat-kernel}
\begin{aligned}
 \frac{C_l e^{-c_ld(x,y)^2/t}\Lambda(x,y,t)}
      {\mu_k(B(x,\sqrt t))}
 &\leq\Gamma_k(t,x,y),\\
 \Gamma_k(t,x,y)
 &\leq \frac{C_u e^{-c_ud(x,y)^2/t}\Lambda(x,y,t)}
      {\mu_k(B(x,\sqrt t))}.
\end{aligned}
\end{equation}
for all $x,y\in\R^2$ and $t>0$.
\end{theorem}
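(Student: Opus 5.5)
The statement is the specialization to the plane of \cite[Theorem~1.1]{DHheat}. The only work is to transfer it from the normalized root system used there to the present convention. I would not reprove the two-sided bound, since its derivation is the main content of \cite{DHheat}. Instead I would apply that theorem to the normalized pair $(\widetilde R,\widetilde k)$ introduced in the subsection on Dunkl notation, with $N=2$. Note that $\widetilde R$ is again a reduced root system in $\R^2$ with $|\widetilde\alpha|^2=2$ and multiplicity $\widetilde k>0$. For every $0<c_u<1/4<c_l<\infty$ this gives constants $C_u,C_l$ such that
\[
 \frac{C_l e^{-c_ld(x,y)^2/t}\Lambda(x,y,t)}{\mu_{\widetilde k}(B(x,\sqrt t))}
 \leq\Gamma_{\widetilde k}(t,x,y)
 \leq\frac{C_u e^{-c_ud(x,y)^2/t}\Lambda(x,y,t)}{\mu_{\widetilde k}(B(x,\sqrt t))}.
\]
Here $\Gamma_{\widetilde k}$ is the kernel denoted $h_t(x,y)$ in \cite{DHheat}, taken relative to $d\mu_{\widetilde k}$.

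Second, I would check that every geometric ingredient on the right-hand side is unchanged under the normalization. The reflections satisfy $\sigma_{\widetilde\alpha}=\sigma_\alpha$, so the group $G$ is the same, and hence so are the orbit distance $d(x,y)$, the Weyl chambers, the count $n(x,y)$ and the admissible sequences of reflections defining $\Lambda$. The factor $\Lambda$ of \cite[(1.6)--(1.7)]{DHheat} is assembled from these reflections and from ratios of the form $(1+|x-y|/\sqrt t)^{-2}$ and similar Euclidean quantities. It therefore depends only on $G$ and the Euclidean metric, not on the lengths of the roots.

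This is the step I expect to require the most care. If some version of $\Lambda$ carried a root-dependent weight, I would check that it enters only through the hyperplanes $\alpha^\perp$, which are scale-invariant. Any remaining harmless bounded factor would then be absorbed into $C_u$ and $C_l$.

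Third, I would convert the measures and kernels. Since $d\mu_{\widetilde k}=a_{R,k}\,d\mu_k$, we have $\mu_{\widetilde k}(B(x,\sqrt t))=a_{R,k}\mu_k(B(x,\sqrt t))$. Also $\Gamma_k=a_{R,k}\Gamma_{\widetilde k}$, as recorded after the heat-kernel formula. Multiplying the displayed inequality by $a_{R,k}$, the factors $a_{R,k}$ cancel exactly. This yields \eqref{eq:refined-heat-kernel} with the same constants $C_u,C_l$ and the same exponents $c_u,c_l$, for all $x,y\in\R^2$ and $t>0$.
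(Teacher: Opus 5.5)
Your proposal is correct and is essentially the paper's own argument. The paper presents the result as \cite[Theorem~1.1]{DHheat} ``in the present notation.'' It relies on the earlier observation that $\Gamma_k=a_{R,k}\Gamma_{\widetilde k}$ and $d\mu_{\widetilde k}=a_{R,k}\,d\mu_k$, so the factors $a_{R,k}$ cancel exactly as in your third step. Your additional check is right: $d$, $n(x,y)$ and $\Lambda$ depend only on the reflections $\sigma_{\widetilde\alpha}=\sigma_\alpha$ and on Euclidean distances, so they are unchanged by the normalization, and the paper leaves this implicit.
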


This is \cite[Theorem~1.1]{DHheat} in the present notation. Notice that neither this theorem nor $\Lambda$ was
used in Proposition~\ref{prop:regular-poles-bmo}.

We first use the refined factor to rule out singular behavior at reflected orbit points distinct
from the actual pole.

\begin{lemma}\label{lem:reflected-orbit-boundedness}
Let $a\in\R^2_{\mathrm{reg}}$. There are $\rho_a,C_a>0$ such that
\begin{equation}\label{eq:reflected-orbit-boundedness}
 \sup_{x\in B(b,\rho_a)}N_k(x,a)\leq C_a
\end{equation}
for every distinct point $b\in\mathcal O(a)\setminus\{a\}$.
\end{lemma}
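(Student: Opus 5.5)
We plan to write $N_k(x,a)=\int_0^\infty\Gamma_k(t,x,a)\,dt$, split the time integral at $t=1$, and bound each piece uniformly for $x\in B(b,\rho_a)$ with $b\in\mathcal O(a)\setminus\{a\}$. First we fix $\rho_a$. Since $a$ is regular, the orbit points are pairwise separated and lie at positive distance from $\mathcal H$. We choose $\rho_a>0$ so that the balls $B(b,2\rho_a)$, $b\in\mathcal O(a)$, are pairwise disjoint, contained in distinct open Weyl chambers, and at distance at least $2\rho_a$ from $\mathcal H$. Then for every $x\in B(b,\rho_a)$ with $b\neq a$ we have $|x-a|\geq c_0:=\min_{b\neq a}|b-a|-\rho_a>0$. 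Moreover, $x$ lies in the open chamber containing $b$, which differs from the chamber of $a$. Hence $a$ is not in any closed chamber containing $x$ (as $x$ is regular, there is exactly one such chamber), so $n(x,a)\geq1$. We also have $\mu_k(B(x,\sqrt t))\simeq t$ for $0<t\leq\rho_a^2$, uniformly in such $x$, and $\mu_k(B(x,\sqrt t))\gtrsim t$ for all $t\leq1$, using the volume estimate \eqref{eq:volume-estimate} and the lower weight bounds near the orbit.

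For the large-time part $t\geq1$ we use only the basic upper bound \eqref{eq:basic-heat-size} together with $\mu_k(B(x,\sqrt t))\gtrsim t^{Q/2}$. Since $Q>2$, this gives $\int_1^\infty\Gamma_k(t,x,a)\,dt\leq C\int_1^\infty t^{-Q/2}\,dt\leq C$, independently of $x$.

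For the small-time part $0<t\leq1$ we invoke the refined upper estimate \eqref{eq:refined-heat-kernel} of Theorem~\ref{thm:refined-heat-kernel}. Because $n(x,a)\geq1$ and $|x-a|\geq c_0$, the bound \eqref{eq:lambda-reflected-bound} gives $\Lambda(x,a,t)\leq C_{c_0}t$. Combined with $\mu_k(B(x,\sqrt t))\gtrsim t$, this yields
\[
 \Gamma_k(t,x,a)\leq C\,\frac{t}{t}\,e^{-c_u d(x,a)^2/t}\leq C
 \qquad(0<t\leq1),
\]
so $\int_0^1\Gamma_k(t,x,a)\,dt\leq C$. This is the key point. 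Without $\Lambda$, the orbit distance $d(x,a)=|x-b|$ vanishes at $x=b$, and the basic bound would only produce the logarithmic majorant \eqref{eq:newton-log-majorant}. The reflection factor $\Lambda$ is what removes the $t^{-1}$ singularity.

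The main obstacle is justifying $n(x,a)\geq1$ uniformly for $x\in B(b,\rho_a)$, together with the lower bound $\mu_k(B(x,\sqrt t))\gtrsim t$ for small $t$ that is uniform over these balls. Both follow from choosing $\rho_a$ below the distance of the orbit to $\mathcal H$, as above. We also have to check that the constant in \eqref{eq:lambda-reflected-bound} depends only on $c_0$; this is exactly how that bound is stated. Summing the two time ranges gives \eqref{eq:reflected-orbit-boundedness}. Since the orbit is finite, the constant can be taken uniform over $b\in\mathcal O(a)\setminus\{a\}$.
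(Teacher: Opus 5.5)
Your proposal is correct and follows the paper's proof essentially step for step. The large-time tail is controlled by $\mu_k(B(x,\sqrt t))\gtrsim t^{Q/2}$ with $Q>2$, and the small-time part by $n(x,a)\geq1$, $|x-a|\geq c_0$, \eqref{eq:lambda-reflected-bound}, and the refined upper estimate. The only difference is that you split at $t=1$ rather than at $\rho_a^2$ (the paper takes $\rho_a<1$). This is harmless, since the volume estimate gives $\mu_k(B(x,\sqrt t))\gtrsim t$ for all $t$ once $x$ stays a fixed distance from $\mathcal H$.
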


\begin{proof}
Choose $0<\rho_a<1$ so that the closed balls $B(b,2\rho_a)$, $b\in\mathcal O(a)$, are pairwise disjoint
and stay a positive distance from $\mathcal H$. The contribution of $t\geq\rho_a^2$ is uniformly bounded:
\[
 \int_{\rho_a^2}^\infty\Gamma_k(t,x,a)\,dt
 \leq C_a\int_{\rho_a^2}^\infty t^{-Q/2}\,dt<\infty.
\]
For $b\in\mathcal O(a)\setminus\{a\}$, one has $|x-a|\geq c_a$ and $n(x,a)\geq1$ on $B(b,\rho_a)$. By
\eqref{eq:lambda-reflected-bound}, \eqref{eq:refined-heat-kernel}, and local volume comparability,
\[
 \Gamma_k(t,x,a)\leq C_at^{-1}\Lambda(x,a,t)\leq C_a,
 \qquad 0<t<\rho_a^2.
\]
This small-time bound is integrable and proves \eqref{eq:reflected-orbit-boundedness}.
\end{proof}

\section{Applications of Euclidean BMO}
\label{sec:applications}

\subsection{Orbit balance for atomic Newton potentials}
\label{subsec:orbit-balance}

We first pass from Euclidean balls to
the quotient geometry of the reflection group. Orbit balls see all reflected copies of a regular
pole simultaneously.

\begin{definition}\label{def:orbit-bmo}
For a Euclidean ball $B=B(x,r)$, set
\[
 \mathcal O(B)=\bigcup_{\sigma\in G}B(\sigma(x),r)=\{z:d(z,x)<r\}.
\]
For $f\in L^1_{\mathrm{loc}}(d\mu_k)$, write $f_{\mathcal O(B)}$ for its weighted average over $\mathcal O(B)$ and define
\[
 \|f\|_{\BMOG}=\sup_B\frac1{\mu_k(\mathcal O(B))}
 \int_{\mathcal O(B)}|f-f_{\mathcal O(B)}|\,d\mu_k.
\]
This is the space $\BMOG(\R^2)$ of \cite[Definition~7.2]{JiuLi}.
\end{definition}

The following inclusion is \cite[Proposition~7.4(ii)]{JiuLi}. We include the short proof to record the
constant.

\begin{lemma}\label{lem:invariant-bmo-to-orbit-bmo}
If $f$ is $G$-invariant and $f\in\BMO(\mu_k)$, then $f\in\BMOG$ and
\[
 \|f\|_{\BMOG}\leq2|G|\|f\|_{\BMO(\mu_k)}.
\]
\end{lemma}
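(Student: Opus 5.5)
Fix an orbit ball $\mathcal O(B)$ with $B=B(x,r)$, and note that $\mathcal O(B)=\bigcup_{\sigma\in G}\sigma(B)$, since $\sigma(B(x,r))=B(\sigma x,r)$. The plan is to compare $f$ on $\mathcal O(B)$ with the single constant $c=f_B$. By $G$-invariance of $f$ and of $\mu_k$ (the weight $w_k$ is $G$-invariant because $G$ permutes $R$ up to sign and $k$ is $G$-invariant), for each $\sigma\in G$ the change of variables $z=\sigma y$ gives
\[
 \int_{\sigma(B)}|f-c|\,d\mu_k=\int_B|f-c|\,d\mu_k,
 \qquad \mu_k(\sigma(B))=\mu_k(B).
\]

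First, we bound the oscillation about $c$. Since the union covering $\mathcal O(B)$ has at most $|G|$ pieces,
\[
 \int_{\mathcal O(B)}|f-c|\,d\mu_k
 \leq\sum_{\sigma\in G}\int_{\sigma(B)}|f-c|\,d\mu_k
 =|G|\int_B|f-f_B|\,d\mu_k
 \leq|G|\,\mu_k(B)\,\|f\|_{\BMO(\mu_k)}.
\]
Since $B\subset\mathcal O(B)$, we have $\mu_k(B)\le\mu_k(\mathcal O(B))$, so
\[
 \frac1{\mu_k(\mathcal O(B))}\int_{\mathcal O(B)}|f-c|\,d\mu_k\le|G|\,\|f\|_{\BMO(\mu_k)}.
\]

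Second, we replace $c$ by the orbit average. The standard inequality
\[
 \int_E|f-f_E|\,d\mu_k\le2\int_E|f-c|\,d\mu_k
\]
holds for any constant $c$, because $|f_E-c|\le\frac{1}{\mu_k(E)}\int_E|f-c|\,d\mu_k$. Applying it with $E=\mathcal O(B)$ yields
\[
 \frac1{\mu_k(\mathcal O(B))}\int_{\mathcal O(B)}|f-f_{\mathcal O(B)}|\,d\mu_k\le2|G|\,\|f\|_{\BMO(\mu_k)}.
\]
Taking the supremum over $B$ gives the stated bound; the local integrability required in the definition of $\BMOG$ is immediate.

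The only point needing care is the invariance of $\mu_k$ under $G$. For $\sigma\in G$ and $\alpha\in R_+$, we have $|\langle\alpha,\sigma x\rangle|=|\langle\sigma^{-1}\alpha,x\rangle|$, and $\sigma^{-1}\alpha=\pm\beta$ with $\beta\in R_+$ and $k(\beta)=k(\alpha)$. Hence $w_k(\sigma x)=w_k(x)$, and $\sigma$ preserves Lebesgue measure. The remaining steps are routine.
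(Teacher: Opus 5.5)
Your proof is correct and follows the paper's argument essentially verbatim. Like the paper, you compare with the constant $f_B$, split $\mathcal O(B)$ into the $|G|$ reflected balls, use $G$-invariance of $f$ and $\mu_k$ to reduce each piece to the oscillation on $B$, use $\mu_k(B)\le\mu_k(\mathcal O(B))$, and finish with $\int_E|f-f_E|\,d\mu_k\le 2\int_E|f-c|\,d\mu_k$. Your explicit check that $w_k$ is $G$-invariant is a harmless addition; the paper takes it for granted, and it implicitly uses that $\alpha\mapsto\beta$ is a bijection of $R_+$.
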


\begin{proof}
Fix $B=B(x,r)$. Since $f$ and $\mu_k$ are $G$-invariant, the weighted averages of $f$ over the
balls $B(\sigma(x),r)$ all equal $f_B$. Hence
\begin{align*}
 \inf_c\int_{\mathcal O(B)}|f-c|\,d\mu_k
 &\leq\int_{\mathcal O(B)}|f-f_B|\,d\mu_k\\
 &\leq\sum_{\sigma\in G}\int_{B(\sigma(x),r)}|f-f_{B(\sigma(x),r)}|\,d\mu_k\\
 &\leq|G|\mu_k(B)\|f\|_{\BMO(\mu_k)}.
\end{align*}
Since $B\subset\mathcal O(B)$, one has $\mu_k(B)\leq\mu_k(\mathcal O(B))$. The inequality
\[
 \int_E|f-f_E|\,d\mu_k\leq2\inf_c\int_E|f-c|\,d\mu_k
\]
now proves the assertion.
\end{proof}

For a regular point $a$, define the unnormalized orbit sum
\[
 N_k^G(x,a)=\sum_{b\in\mathcal O(a)}N_k(x,b).
\]
The stabilizer of a regular point is trivial, so this is the group sum in \cite[Remark~5.10, (5.41)]{GLR}.
With the full-space invariant projection convention one instead uses $|G|^{-1}N_k^G$. See
\cite[Section~3.2]{GraczykSawyer} for sharp estimates in type $A$.

The next result shows that orbit BMO detects precisely whether the coefficients
of an atomic source supported on a regular orbit are constant along the orbit.

\begin{theorem}\label{thm:orbit-balance}
Let $a\in\R^2_{\mathrm{reg}}$, let $(c_b)_{b\in\mathcal O(a)}$ be real numbers, and set
\[
 U_c(x)=\sum_{b\in\mathcal O(a)}c_bN_k(x,b).
\]
Then $U_c\in\BMO(\mu_k)$, and
\[
 U_c\in\BMOG
 \quad\Longleftrightarrow\quad
 c_b\text{ is independent of }b\in\mathcal O(a).
\]
In particular,
\[
 N_k(\cdot,a)\in\BMO(\mu_k)\setminus\BMOG,
 \qquad N_k^G(\cdot,a)\in\BMOG.
\]
Moreover, for every compact set $K\Subset\R^2_{\mathrm{reg}}$,
\[
 \sup_{a\in K}\|N_k^G(\cdot,a)\|_{\BMOG}<\infty.
\]
\end{theorem}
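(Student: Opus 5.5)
The plan has four steps: Euclidean BMO membership, sufficiency and the uniform bound, necessity, and the two special cases.

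\textbf{Euclidean BMO membership.} Every $b\in\mathcal O(a)$ is regular. Proposition~\ref{prop:regular-poles-bmo} gives $N_k(\cdot,b)\in\BMO(\mu_k)$ for each such $b$. Since $\BMO(\mu_k)$ is a vector space and the orbit is finite, $U_c\in\BMO(\mu_k)$.

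\textbf{Sufficiency and the uniform orbit bound.} If $c_b\equiv c$, then $U_c=cN_k^G(\cdot,a)$. By $G$-covariance, $N_k(\sigma x,b)=N_k(x,\sigma^{-1}b)$, and summing over the orbit shows that $N_k^G(\cdot,a)$ is $G$-invariant. Lemma~\ref{lem:invariant-bmo-to-orbit-bmo} then gives
\[
\|N_k^G(\cdot,a)\|_{\BMOG}\le 2|G|\sum_{b\in\mathcal O(a)}\|N_k(\cdot,b)\|_{\BMO(\mu_k)}.
\]
If $a$ ranges over a compact $K\Subset\R^2_{\mathrm{reg}}$, the orbit points range over $\mathcal O(K)$. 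This set is again a compact subset of $\R^2_{\mathrm{reg}}$, because $\mathcal H$ is $G$-invariant. Hence \eqref{eq:uniform-regular-poles-bmo}, applied to $\mathcal O(K)$, gives the uniform bound $\sup_{a\in K}\|N_k^G(\cdot,a)\|_{\BMOG}<\infty$.

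\textbf{Necessity.} Suppose $c_{b_1}\neq c_{b_2}$ for some $b_1,b_2\in\mathcal O(a)$. I will test the orbit-BMO quantity on the small orbit balls $\mathcal O(B(a,r))=\bigcup_{b}B(b,r)$ with $r\to0$. For $r<\rho_a$ these balls are disjoint, and on each of them $w_k$ is comparable to a constant independent of $r$, since $w_k$ is $G$-invariant. Consequently each piece carries a fixed proportion of $\mu_k(\mathcal O(B))$, and that proportion is bounded below.

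On the piece $B(b,r)$, write $U_c=c_bN_k(\cdot,b)+\sum_{b'\neq b}c_{b'}N_k(\cdot,b')$. Lemma~\ref{lem:reflected-orbit-boundedness}, after relabelling the pole via covariance, bounds each term with $b'\neq b$ near $b$. Hence
\[
U_c=c_bN_k(\cdot,b)+O(1)\quad\text{on }B(b,r).
\]
Corollary~\ref{cor:regular-pole-log-bounds} gives $N_k(x,b)\ge c\log(1/r)-C$ on $B(b,r)$, using covariance, and it also gives a matching upper average. So the average of $U_c$ over $B(b,r)$ is $c_b L_b(r)+O(1)$, where $L_b(r)\simeq\log(1/r)$.

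To compare different orbit points, I use covariance again: $N_k(\sigma x,\sigma a)=N_k(x,a)$, and $\mu_k$ is $G$-invariant. Therefore $\int_{B(b,r)}N_k(\cdot,b)\,d\mu_k$ does not depend on $b$, so $L_b(r)=L(r)$ is the same for all $b$. Moreover $L(r)\to\infty$ as $r\to0$.

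The mean oscillation over $\mathcal O(B)$ is at least a constant multiple of the spread of the piece averages. Indeed,
\[
\inf_c\int_{\mathcal O(B)}|U_c-c|\,d\mu_k\ \ge\ \sum_b\mu_k(B(b,r))\,\bigl|(U_c)_{B(b,r)}-c\bigr|,
\]
and the right side is $\gtrsim \mu_k(\mathcal O(B))\,|c_{b_1}-c_{b_2}|\,L(r)-C$. This tends to $\infty$, so $U_c\notin\BMOG$.

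\textbf{Special cases.} Taking $c=\delta_{b,a}$ requires $|\mathcal O(a)|\ge2$, which holds because $\gamma>0$ forces $R\neq\emptyset$ and $a$ is regular, so its stabilizer is trivial. This gives $N_k(\cdot,a)\in\BMO(\mu_k)\setminus\BMOG$, while $c\equiv1$ gives $N_k^G(\cdot,a)\in\BMOG$.

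\textbf{Main obstacle.} I expect the main obstacle to be the necessity step, in particular keeping the $O(1)$ error terms uniform in $r$. This rests on Lemma~\ref{lem:reflected-orbit-boundedness} together with the exact equality of the singular averages across the orbit, which covariance provides.
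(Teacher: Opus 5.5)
Your proposal is correct and follows essentially the same route as the paper: Euclidean BMO for each summand, $G$-invariance plus Lemma~\ref{lem:invariant-bmo-to-orbit-bmo} (with compactness of $\mathcal O(K)$) for sufficiency and the uniform bound, and, for necessity, covariance-equalized singular averages $L(r)\gtrsim\log(1/r)$, Lemma~\ref{lem:reflected-orbit-boundedness} for the $O(1)$ cross terms, and Jensen on the disjoint pieces of $\mathcal O(B(a,r))$. One small normalization slip: the error term in your final lower bound should be $C\mu_k(\mathcal O(B))$ rather than $C$, since it comes from $O(1)$ averages multiplied by $\mu_k(B(b,r))$; as written, dividing by $\mu_k(\mathcal O(B))\to0$ would not give divergence, but your preceding line already yields the correctly scaled bound.
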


\begin{proof}
Because $\gamma>0$ and $a$ is regular, $\mathcal O(a)$ contains at least two points. Indeed, reflection in
any root hyperplane moves $a$. Every point of $\mathcal O(a)$ is regular, so Proposition~\ref{prop:regular-poles-bmo} shows that
each summand belongs to $\BMO(\mu_k)$. Hence $U_c\in\BMO(\mu_k)$.

If all coefficients have the same value, then the covariance of $N_k$ shows that $U_c$ is $G$-invariant.
The asserted $\BMOG$ membership follows from Lemma~\ref{lem:invariant-bmo-to-orbit-bmo}.

Conversely, suppose that $c_b\neq c_{b'}$ for two points $b,b'\in\mathcal O(a)$. Choose $\rho>0$ so that the balls
$B(q,2\rho)$, $q\in\mathcal O(a)$, are pairwise disjoint and so that Corollary~\ref{cor:regular-pole-log-bounds} and Lemma~\ref{lem:reflected-orbit-boundedness} apply at
every orbit point. For $0<r<\rho$, put
\[
 A_r=\frac1{\mu_k(B(q,r))}\int_{B(q,r)}N_k(x,q)\,d\mu_k(x).
\]
By the covariance of $N_k$ and the invariance of $\mu_k$, this quantity is independent of $q\in\mathcal O(a)$.
Corollary~\ref{cor:regular-pole-log-bounds} gives
\[
 A_r\geq c\log\frac1r-C,
\]
and hence $A_r\to\infty$ as $r\downarrow0$.

If $p,q\in\mathcal O(a)$ and $p\neq q$, then Lemma~\ref{lem:reflected-orbit-boundedness}, applied with pole $q$, bounds $N_k(\cdot,q)$ uniformly
on $B(p,\rho)$. Since the orbit is finite, it follows that
\[
 (U_c)_{B(q,r)}=c_qA_r+O_{a,c}(1),
 \qquad q\in\mathcal O(a),
\]
uniformly for $0<r<\rho$. Therefore
\[
 \bigl|(U_c)_{B(b,r)}-(U_c)_{B(b',r)}\bigr|
 \geq|c_b-c_{b'}|A_r-O_{a,c}(1)\longrightarrow\infty.
\]
Set $E_r=\mathcal O(B(a,r))$. For small $r$, this is the disjoint union of the balls $B(q,r)$, $q\in\mathcal O(a)$,
and all these balls have the same measure. For every constant $c$, Jensen's inequality and the
triangle inequality give
\begin{align*}
 &\int_{B(b,r)}|U_c-c|\,d\mu_k
 +\int_{B(b',r)}|U_c-c|\,d\mu_k\\
 &\hspace{35mm}\geq\mu_k(B(a,r))
 \bigl|(U_c)_{B(b,r)}-(U_c)_{B(b',r)}\bigr|.
\end{align*}
Taking $c=(U_c)_{E_r}$ and dividing by $\mu_k(E_r)=|\mathcal O(a)|\mu_k(B(a,r))$ shows that the mean oscillation
on $E_r$ tends to infinity. Thus $U_c\notin\BMOG$.

Finally, if $a$ ranges over $K\Subset\R^2_{\mathrm{reg}}$, then $\bigcup_{\sigma\in G}\sigma K$ is compact in $\R^2_{\mathrm{reg}}$. The uniform Euclidean-
BMO estimate \eqref{eq:uniform-regular-poles-bmo}, followed by Lemma~\ref{lem:invariant-bmo-to-orbit-bmo}, gives the final assertion.
\end{proof}
\subsection{Off-diagonal regularity in a chamber}

The following estimate is obtained by integrating \eqref{eq:basic-heat-regularity}.

\begin{proposition}\label{prop:off-diagonal-regularity}
Let $D$ be an open Weyl chamber and let $K\Subset U\Subset D$. There is $C_{K,U}>0$ such
that, for $x,y,y'\in K$, $x\neq y$, and $2|y-y'|\leq|x-y|$,
\begin{equation}\label{eq:newton-regularity-y}
 |N_k(x,y')-N_k(x,y)|\leq C_{K,U}\frac{|y-y'|}{|x-y|}.
\end{equation}
The same estimate holds in the first variable for $x,x',y\in K$, $x\neq y$:
\begin{equation}\label{eq:newton-regularity-x}
 |N_k(x',y)-N_k(x,y)|\leq C_{K,U}\frac{|x'-x|}{|x-y|}
\end{equation}
whenever $2|x'-x|\leq|x-y|$.
\end{proposition}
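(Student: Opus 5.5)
Write $N_k(x,y')-N_k(x,y)=\int_0^\infty\bigl(\Gamma_k(t,x,y')-\Gamma_k(t,x,y)\bigr)\,dt$ and split the time integral at $t_0=4|y-y'|^2$. The second statement, \eqref{eq:newton-regularity-x}, follows from \eqref{eq:newton-regularity-y} by the symmetry $N_k(x,y)=N_k(y,x)$ together with the symmetric version of \eqref{eq:basic-heat-regularity}. So it suffices to treat the $y$-variable.

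\emph{Geometric preliminaries.} Since $K\Subset U\Subset D$ and $D$ is an open Weyl chamber, we have $\delta=\dist(U,\mathcal H)>0$. For $x,y\in U$ and $\sigma\neq e$, the segment from $x$ to $\sigma(y)$ crosses a reflecting line. From this one gets $|x-\sigma(y)|\geq|x-y|$, because for a reflection $\sigma_\alpha$ with $x,y$ on the same side of $H_\alpha$ one has $|x-\sigma_\alpha y|\geq|x-y|$, and chamber points are on the same side of every root line. Hence $d(x,y)=|x-y|$ for $x,y\in D$. In particular, for $x,y,y'\in K$ we have $d(x,y)=|x-y|$ and $d(x,y')=|x-y'|\simeq|x-y|$, since $2|y-y'|\leq|x-y|$. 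Moreover $w_k\simeq1$ on a neighborhood of $K$. By \eqref{eq:volume-estimate}, this gives $\mu_k(B(x,\sqrt t))\simeq t$ for $t\leq T$, with $T$ a fixed time depending on $K,U$, and $\mu_k(B(x,\sqrt t))\gtrsim t^{Q/2}$ for all $t$.

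\emph{Large times $t\geq t_0$.} Here $|y-y'|\leq\sqrt t/2$, so \eqref{eq:basic-heat-regularity} applies and bounds the contribution by
\[
 C|y-y'|\int_{t_0}^\infty t^{-1/2}\frac{e^{-c|x-y|^2/t}}{\mu_k(B(x,\sqrt t))}\,dt .
\]
Split this at $T$. On $t\leq T$ the integrand is $\lesssim t^{-3/2}e^{-c|x-y|^2/t}$, and
\[
 \int_0^\infty t^{-3/2}e^{-c|x-y|^2/t}\,dt\simeq|x-y|^{-1}.
\]
On $t\geq T$ the integrand is $\lesssim t^{-(Q+1)/2}$, which is integrable. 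Its contribution is then $\lesssim|y-y'|\leq C_{K,U}|y-y'|/|x-y|$, since $|x-y|\leq\operatorname{diam}K$.

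\emph{Small times $t<t_0$.} Bound each heat kernel separately by the upper estimate in \eqref{eq:basic-heat-size}:
\[
 \int_0^{t_0}\bigl(\Gamma_k(t,x,y)+\Gamma_k(t,x,y')\bigr)\,dt
 \lesssim\int_0^{t_0}t^{-1}e^{-c|x-y|^2/(4t)}\,dt,
\]
using $d(x,y')\geq|x-y|/2$. Since $t_0\leq|x-y|^2$, we have $e^{-s}\lesssim s^{-1/2}$ with $s=c|x-y|^2/(4t)$. The integrand is therefore $\lesssim t^{-1/2}|x-y|^{-1}$, and integrating gives $\sqrt{t_0}\,|x-y|^{-1}=2|y-y'|/|x-y|$. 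Here $t_0\leq T$ is needed to use $\mu_k\simeq t$; if $t_0>T$, the excess part is handled as before by $t^{-Q/2}$ decay.

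The only delicate point is justifying $d(x,y)=|x-y|$ inside a chamber, i.e.\ that reflected points are no closer. I would prove it via the same-side reflection inequality $|x-\sigma_\alpha y|^2-|x-y|^2=4\langle x,\alpha\rangle\langle y,\alpha\rangle/|\alpha|^2>0$ for a single reflection. For a general $\sigma\in G$, I would use the standard fact that the closed chamber is a fundamental domain, so that $|x-\sigma y|\geq|x-y|$ for $x,y$ in the same closed chamber. Alternatively, it suffices to have $d(x,y)\gtrsim|x-y|$ for $x,y\in K$. This follows from compactness when $|x-y|$ is not small, using that $|x-\sigma y|$ is bounded below on $K\times K$ for $\sigma\neq e$. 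When $|x-y|$ is small, $\sigma y$ stays far from $x$.
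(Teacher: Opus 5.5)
Your proposal is correct and follows the paper's proof. You split the time integral at $4|y-y'|^2$, use the Gaussian size bound with $d(x,y')\geq|x-y|/2$ below the cutoff and the regularity estimate above it, invoke $\mu_k(B(x,\sqrt t))\simeq t$ for bounded $t$ and the global $t^{Q/2}$ lower bound for large $t$, and obtain the first-variable estimate by symmetry. The only difference is cosmetic: your single cutoff $T\geq(\operatorname{diam}K)^2$ replaces the paper's case distinctions $r\gtrless\delta/4$ and $s\gtrless\delta/16$.
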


\begin{proof}
Put $r=|x-y|$, $s=|y-y'|$, and $\delta=\dist(U,\mathcal H)$. Points in the same open Weyl chamber
satisfy $d(x,y)=|x-y|$ and $d(x,y')\geq r/2$. Split the heat integral at $4s^2$. On $0<t<4s^2$, use
the two size estimates; on $t\geq4s^2$, the condition $s\leq\sqrt t/2$ permits the use of
\eqref{eq:basic-heat-regularity}. If $r<\delta/4$, local volume comparability gives
\[
 \int_0^{4s^2}\bigl(\Gamma_k(t,x,y)+\Gamma_k(t,x,y')\bigr)\,dt\leq C\frac sr,
\]
\[
 \int_{4s^2}^{\delta^2}|\Gamma_k(t,x,y)-\Gamma_k(t,x,y')|\,dt
 \leq Cs\int_{4s^2}^{\delta^2}t^{-3/2}e^{-cr^2/t}\,dt
 \leq C\frac sr.
\]
For $t\geq\delta^2$, the lower volume bound gives an additional term bounded by $C_{K,U}s$, which is
at most $C_{K,U}s/r$ because $r$ is bounded on $K$.

Suppose now that $r\geq\delta/4$.  Then
$d(x,y)=r\geq\delta/4$ and $d(x,y')\geq r/2\geq\delta/8$.  If
$s<\delta/16$, the same splitting argument, using Gaussian decay below
$\delta^2$ and the regularity estimate above $4s^2$, gives a bound by
$C_{K,U}s$.  If $s\geq\delta/16$, integrating the size estimate gives a
uniform bound for both Newton kernels on this compact off-orbit region, and this bound is
at most $C_{K,U}s/r$ because $r$ is bounded on $K$.  Thus in both cases the
difference is bounded by $C_{K,U}s/r$.  This proves
\eqref{eq:newton-regularity-y}; symmetry gives
\eqref{eq:newton-regularity-x}.
\end{proof}

\subsection{Local atomic Hardy results}
\label{subsec:local-atomic-hardy}

The atoms below are localized versions of the Coifman--
Weiss atoms \cite{CoifmanWeiss}. For the global atomic characterization of the Hardy space associated with the
Dunkl Laplacian, see \cite{DHatomic}.

For a bounded open set $\Omega\subset\R^2$, write
\[
 \delta_\Omega:=\dist(\overline\Omega,\mathcal H).
\]
Thus $\delta_\Omega>0$ means precisely that $\Omega$ is uniformly separated from all reflecting lines.

\begin{definition}\label{def:local-atoms}
Let $\Omega\subset\R^2$ be open. A local $L^\infty$-atom in $\Omega$ is a function $a$ for which there
is a Euclidean ball $B\Subset\Omega$ such that
\[
 \supp a\subset B,
 \qquad \int_Ba\,d\mu_k=0,
 \qquad \|a\|_{L^\infty(d\mu_k)}\leq\mu_k(B)^{-1}.
\]
The space $H^1_{\mathrm{at}}(\Omega,\mu_k)$ consists of all sums
\[
 f=\sum_j\lambda_ja_j,
 \qquad \sum_j|\lambda_j|<\infty,
\]
where $a_j$ are local atoms in $\Omega$ and the series converges in $L^1(\Omega,d\mu_k)$. The space is equipped
with the usual atomic norm.
\end{definition}

\subsubsection{Atomic potential estimate}

For $f$ supported in $\Omega$, define formally
\[
 N_\Omega f(x)=\int_\Omega N_k(x,y)f(y)\,d\mu_k(y),
 \qquad x\in\Omega.
\]
Under the separation hypothesis below, the expression for atoms and $H^1_{\mathrm{at}}$-functions is
understood through the $H^1$--BMO pairing. In the necessity argument, ordinary atom potentials are
evaluated only at regular points.

\begin{theorem}\label{thm:atomic-newton-potential}
Let $\Omega\subset\R^2$ be bounded and open, and assume
\[
 \delta_\Omega=\dist(\overline\Omega,\mathcal H)>0.
\]
Then $N_\Omega$ extends to a bounded operator
\[
 N_\Omega:H^1_{\mathrm{at}}(\Omega,\mu_k)\longrightarrow L^\infty(\Omega)
\]
with
\[
 \|N_\Omega f\|_{L^\infty(\Omega)}
 \leq C_\Omega\|f\|_{H^1_{\mathrm{at}}(\Omega,\mu_k)}.
\]
Moreover, for every $f\in H^1_{\mathrm{at}}(\Omega,\mu_k)$ and every $x\in\Omega$,
\[
 N_\Omega f(x)=\langle N_k(x,\cdot),f\rangle_{H^1\text{--}\mathrm{BMO}},
\]
and the definition is independent of the chosen atomic decomposition.
\end{theorem}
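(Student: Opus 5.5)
The plan is to reduce to a uniform estimate on single atoms and then pass to sums by linearity. Since $\overline\Omega$ is compact and $\delta_\Omega>0$, the set $K=\overline\Omega$ is a compact subset of $\R^2_{\mathrm{reg}}$. Proposition~\ref{prop:regular-poles-bmo} applies at every pole $x\in K$, and by symmetry $N_k(x,\cdot)=N_k(\cdot,x)$. This gives
\[
 M_\Omega:=\sup_{x\in\overline\Omega}\|N_k(x,\cdot)\|_{\BMO(\mu_k)}<\infty .
\]
Fix $x\in\Omega$ and a local atom $a$ supported in a ball $B\Subset\Omega$. I first check that the integral $\int N_k(x,y)a(y)\,d\mu_k(y)$ converges absolutely. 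Near $x$, the weight is comparable to a constant and \eqref{eq:newton-log-majorant} gives a logarithmic singularity, which is integrable. Near the other orbit points, Lemma~\ref{lem:reflected-orbit-boundedness} gives boundedness. Since $a\in L^\infty$ has compact support, the integral is finite.

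Next I use the cancellation $\int a\,d\mu_k=0$ to subtract the average $c=(N_k(x,\cdot))_B$:
\[
 \Bigl|\int_B N_k(x,y)a(y)\,d\mu_k(y)\Bigr|
 =\Bigl|\int_B\bigl(N_k(x,y)-c\bigr)a(y)\,d\mu_k(y)\Bigr|
 \le\frac1{\mu_k(B)}\int_B|N_k(x,\cdot)-c|\,d\mu_k\le M_\Omega .
\]
So $|N_\Omega a(x)|\le M_\Omega$, uniformly in the atom and in $x\in\Omega$. This is the core estimate. For $f=\sum_j\lambda_ja_j$, I set $N_\Omega f(x)=\sum_j\lambda_jN_\Omega a_j(x)$. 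The series converges absolutely with bound $M_\Omega\sum_j|\lambda_j|$, and taking the infimum over decompositions gives the norm bound with $C_\Omega=M_\Omega$.

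The main obstacle is showing that this definition does not depend on the atomic decomposition, and identifying it with the $H^1$--BMO pairing. The plan is to realize $\langle N_k(x,\cdot),f\rangle$ as a genuine duality pairing between $H^1_{\mathrm{at}}(\Omega,\mu_k)$ and a BMO function. For a single atom, the pairing with $b=N_k(x,\cdot)$ is the absolutely convergent integral above, and it is unchanged when a constant is added to $b$. I would use the standard Coifman--Weiss duality \cite{CoifmanWeiss} on the doubling space $(\R^2,\mu_k)$ in the following form: the functional $g\mapsto\int bg\,d\mu_k$, defined on finite atomic combinations, satisfies $|\int bg\,d\mu_k|\le C\|b\|_{\BMO}\|g\|_{H^1_{\mathrm{at}}}$. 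Local atoms in $\Omega$ are in particular Coifman--Weiss atoms, so this functional extends continuously to $H^1_{\mathrm{at}}(\Omega,\mu_k)$.

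Independence then follows. Suppose $\sum\lambda_ja_j=\sum\lambda_j'a_j'$ in $L^1$. Then $h=\sum\lambda_ja_j-\sum\lambda_j'a_j'$ is an absolutely convergent atomic series equal to $0$, and the partial sums tend to $0$ in $H^1_{\mathrm{at}}$. Continuity of the extended functional then gives $\sum\lambda_jN_\Omega a_j(x)=\sum\lambda_j'N_\Omega a_j'(x)$.

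The delicate point is that the extended functional is given by the pointwise sums, which holds because each term is the pairing with an atom. To keep this within $\Omega$, I can truncate $b$ by replacing $N_k(x,\cdot)$ with $\min(N_k(x,\cdot),T)$. This truncation keeps the BMO norm at most $M_\Omega$, and the integral against $h$ then converges absolutely in $L^1$. I would then let $T\to\infty$ using dominated convergence on each atom.
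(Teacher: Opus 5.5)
Your proposal is correct and follows essentially the paper's route. It uses the uniform bound $M_\Omega$ from Proposition~\ref{prop:regular-poles-bmo} and symmetry, the cancellation estimate $|N_\Omega a(x)|\le M_\Omega$ for a single atom, absolute summation of the atomic series, and independence of the decomposition by treating local atoms as global Coifman--Weiss atoms with the continuous $H^1$--BMO pairing. Your extra truncation argument with $\min(N_k(x,\cdot),T)$ is a valid self-contained justification of the same independence step; the paper uses the analogous truncation device only later, in the proof of Theorem~\ref{thm:dunkl-CLMS}.
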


\begin{proof}
Since $\overline\Omega$ is compact and disjoint from $\mathcal H$, \eqref{eq:uniform-regular-poles-bmo} and symmetry give
\[
 M_\Omega:=\sup_{x\in\overline\Omega}\|N_k(x,\cdot)\|_{\BMO(\mu_k)}<\infty.
\]
After extension by zero, every local atom is a global Coifman--Weiss
$(1,\infty)$-atom on $(\mathbb R^2,|\cdot|,\mu_k)$.  Hence
$H^1_{\mathrm{at}}(\Omega,\mu_k)$ embeds continuously into the global
Coifman--Weiss atomic Hardy space, with norm at most the local atomic norm.
In particular, the $H^1$--BMO pairing below is well defined and depends only
on the function, not on its local atomic representation.
Let $a$ be a local atom supported in $B\Subset\Omega$. For fixed $x\in\Omega$, define
\[
 N_\Omega a(x):=\langle N_k(x,\cdot),a\rangle_{H^1\text{--}\mathrm{BMO}}.
\]
The cancellation of $a$ gives the concrete formula
\[
 N_\Omega a(x)=\int_B
 \bigl(N_k(x,y)-(N_k(x,\cdot))_B\bigr)a(y)\,d\mu_k(y).
\]
Consequently,
\begin{align*}
 |N_\Omega a(x)|
 &\leq\mu_k(B)^{-1}\int_B
 \bigl|N_k(x,y)-(N_k(x,\cdot))_B\bigr|\,d\mu_k(y)\\
 &\leq\|N_k(x,\cdot)\|_{\BMO(\mu_k)}\leq M_\Omega.
\end{align*}
Taking the supremum over $x\in\Omega$ gives
\[
 \|N_\Omega a\|_{L^\infty(\Omega)}\leq M_\Omega
\]
for every local atom.

If $f=\sum_{j=1}^N\lambda_ja_j$ is a finite atomic sum, set
$N_\Omega f=\sum_{j=1}^N\lambda_jN_\Omega a_j$. Then
\[
 \|N_\Omega f\|_{L^\infty(\Omega)}\leq M_\Omega\sum_{j=1}^N|\lambda_j|.
\]
For a general atomic representation $f=\sum_j\lambda_ja_j$, the series
\[
 \sum_j\lambda_jN_\Omega a_j
\]
converges absolutely in $L^\infty(\Omega)$. This defines $N_\Omega f$ and gives the asserted bound after taking
the infimum over all atomic decompositions. For each fixed $x$, the same calculation says that
the value equals the $H^1$--BMO pairing with $N_k(x,\cdot)$; hence it does not depend on the chosen
decomposition.
\end{proof}

\begin{corollary}\label{cor:continuous-atomic-potentials}
Assume $\Omega\subset\R^2$ is bounded and $\delta_\Omega>0$. Then $N_\Omega a$ has a continuous representative on $\Omega$ for every local atom $a$. If $f=\sum_j\lambda_ja_j$ with $\sum_j|\lambda_j|<\infty$, then $N_\Omega f$ is the
uniform limit on $\Omega$ of continuous finite atomic potentials.
\end{corollary}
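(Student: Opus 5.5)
The plan is to first establish continuity of $N_\Omega a$ for a single local atom $a$ supported in $B\Subset\Omega$, and then pass to general atomic sums using the uniform bound of Theorem~\ref{thm:atomic-newton-potential}. For the atom I would use the concrete formula from that proof,
\[
 N_\Omega a(x)=\int_B\bigl(N_k(x,y)-(N_k(x,\cdot))_B\bigr)a(y)\,d\mu_k(y).
\]
Since $a$ has mean zero, the constant $(N_k(x,\cdot))_B$ integrates to zero. Hence $N_\Omega a(x)=\int_B N_k(x,y)a(y)\,d\mu_k(y)$, an absolutely convergent integral: $a$ is bounded, and by \eqref{eq:newton-log-majorant} together with the uniform regular-pole constants on the compact set $\overline\Omega\Subset\R^2_{\mathrm{reg}}$, $N_k(x,\cdot)$ is dominated on $B$ by $C(1+\log^+(\rho/d(x,y)))$.

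To prove continuity at $x_0\in\Omega$, take $x\to x_0$. For each $y\neq$ orbit points of $x_0$, the map $x\mapsto N_k(x,y)$ is continuous. This follows from \eqref{eq:basic-heat-regularity}: one splits the heat integral at a small time $\varepsilon$. The short-time part is uniformly small, because $d(x,y)$ stays bounded away from zero. The long-time part is handled by the Lipschitz estimate in $x$ together with the tail bound from $Q>2$. The integrand therefore converges for $\mu_k$-a.e.\ $y$. Domination then comes from a single majorant: $N_k(x,y)\le C(1+\sum_{b\in\mathcal O(x_0)}\log^+(\rho/|y-b|))$ for $|x-x_0|$ small. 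Its $\log^+$ singularities are locally integrable, but the majorant has to be uniform in $x$. I would secure this uniformity by the standard trick of showing uniform integrability rather than a fixed dominant. Uniform integrability holds because $\int_{E}\log^+(\rho/|y-\sigma x|)\,dy\to0$ as $|E|\to0$, uniformly in $x$, and the weight is bounded near $\Omega$. Vitali's theorem then gives $N_\Omega a(x)\to N_\Omega a(x_0)$. Alternatively, one can simply cite Proposition~\ref{prop:off-diagonal-regularity} away from the diagonal and handle the diagonal ball by the uniform logarithmic bound.

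For $f=\sum_j\lambda_ja_j$, the finite partial sums $S_n=\sum_{j\le n}\lambda_jN_\Omega a_j$ are continuous on $\Omega$. Theorem~\ref{thm:atomic-newton-potential} gives $\|N_\Omega f-S_n\|_{L^\infty(\Omega)}\le M_\Omega\sum_{j>n}|\lambda_j|\to0$, and since this convergence holds pointwise, it is uniform on $\Omega$ in the genuine sup sense. The uniform limit is therefore continuous, which gives both assertions.

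The main obstacle is the dominated/uniformly integrable convergence near the logarithmic pole, and at reflected orbit points when $B$ contains them. Note that $B\subset\Omega$ may contain reflected copies of $x_0$ if $\Omega$ meets several chambers. The log majorant over the whole orbit handles this case, since Lemma~\ref{lem:reflected-orbit-boundedness} even shows boundedness there.
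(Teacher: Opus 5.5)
Your proof is correct. It differs from the paper in how it handles distinct reflected orbit points.

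The paper splits the $y$-integral into two parts. Near the diagonal it uses uniform integrability of the logarithmic majorant. On the complement it uses joint continuity of the explicit heat kernel with dominated convergence in $t$. At a pair with $x_0\neq y_0=\sigma x_0$ it invokes the sharp Dziuba\'nski--Hejna bound \eqref{eq:refined-heat-kernel} together with \eqref{eq:lambda-reflected-bound}. This gives $\Gamma_k(t,x,y)\le C$ for small $t$, and hence that $N_k$ is bounded and jointly continuous near such pairs.

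You instead use the fact that \eqref{eq:newton-log-majorant} is stated with the orbit distance. This gives $N_k(x,\cdot)\le C\bigl(1+\sum_{\sigma\in G}\log^+(\rho/|\cdot-\sigma x|)\bigr)$, with constants uniform for $x\in\overline\Omega$. That family is uniformly integrable on $B$ whether or not $B$ contains reflected copies of $x_0$. Combined with continuity of $x\mapsto N_k(x,y)$ at $x_0$ for every $y\notin\mathcal O(x_0)$, which you derive from \eqref{eq:basic-heat-size} and \eqref{eq:basic-heat-regularity}, Vitali's theorem gives continuity.

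Your route treats the diagonal and all reflected orbit points in one step and uses only Theorem~\ref{thm:basic-heat-kernel}. A logarithmic overestimate at reflected points is harmless here, because it is still uniformly integrable. The paper's route yields the stronger fact that $N_k$ stays bounded near reflected orbit points. The paper does need that (via Lemma~\ref{lem:reflected-orbit-boundedness}) for Theorem~\ref{thm:orbit-balance}, but this corollary does not. Your passage to atomic sums via the pointwise bound $M_\Omega$ is the same as the paper's.

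One caution about the ``alternative'' you mention. Citing Proposition~\ref{prop:off-diagonal-regularity} away from the diagonal would not suffice by itself. That proposition requires all points to lie in a single open Weyl chamber, whereas here $\Omega$ need not be connected and may meet several chambers. Your main Vitali argument does not depend on it.
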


\begin{proof}
Let $a$ be supported in $B\Subset\Omega$. The representative constructed above is
\[
 N_\Omega a(x)=\int_BN_k(x,y)a(y)\,d\mu_k(y),
\]
and the logarithmic local singularity makes the integral absolutely convergent. Near $B$, split the
integral into a small neighborhood of the diagonal and its complement. Uniform integrability
of the logarithmic majorant controls the first part.  The explicit heat-kernel formula shows that
$\Gamma_k(t,x,y)$ is jointly continuous in the spatial variables, and the basic size estimate gives
an integrable majorant on compact sets where the orbit distance is positive.  This gives continuity
of the second part and also proves continuity away from $B$ unless a distinct reflected orbit point
is present.

It remains to consider a distinct reflected orbit point, which is not covered
by Proposition~\ref{prop:off-diagonal-regularity}.  Fix
$x_0\in\Omega\setminus\overline B$ and $y_0\in\overline B$ with
$d(x_0,y_0)=0$ and $x_0\neq y_0$.  Since $\overline\Omega$ is separated from
$\mathcal H$, there are neighborhoods $V\ni x_0$ and $W\ni y_0$, contained
in fixed Weyl chambers, and constants $c_0>0$ and $0<t_0\leq1$ such that
$|x-y|\geq c_0$ and $n(x,y)\geq1$ on $V\times W$.  Local volume
comparability, \eqref{eq:lambda-reflected-bound}, and
\eqref{eq:refined-heat-kernel} give
$\Gamma_k(t,x,y)\leq C$ on $V\times W$ for $0<t\leq t_0$.  For $t\geq t_0$,
\eqref{eq:basic-heat-size} and \eqref{eq:volume-estimate} give
$\Gamma_k(t,x,y)\leq Ct^{-Q/2}$.  Since the heat kernel is jointly
continuous and $Q>2$, dominated convergence shows
that $N_k$ is jointly continuous on $V\times W$.  The orbit of $x_0$ is
finite, so the same argument at the finitely many points of
$\mathcal O(x_0)\cap\overline B$ justifies dominated convergence in the
$y$-integral.  This proves continuity at every distinct reflected orbit point.

The assertion for $f=\sum_j\lambda_ja_j$ follows from the uniform convergence in the preceding
theorem.
\end{proof}

\subsubsection{H\"older regularity of local potentials}

\begin{corollary}\label{cor:holder-local-potentials}
Let $D$ be an open Weyl chamber. Let $\Omega\subset D$ be bounded and open
with
\[
 \delta_\Omega=\dist(\overline\Omega,\mathcal H)>0.
\]
Let $1<p\leq\infty$. If $f\in L^p(\Omega,d\mu_k)$, then $N_\Omega f$ has a H\"older continuous representative on $\overline\Omega$.
More precisely,
\[
 \|N_\Omega f\|_{C^\alpha(\overline\Omega)}
 \leq C_{\Omega,p,\alpha}\|f\|_{L^p(\Omega,d\mu_k)}
\]
for every
\[
 0<\alpha<\min\left\{1,2-\frac2p\right\}.
\]
\end{corollary}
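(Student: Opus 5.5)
We prove a uniform sup bound and a uniform H\"older bound for $N_\Omega f(x)=\int_\Omega N_k(x,y)f(y)\,d\mu_k(y)$. Both use the local logarithmic behaviour of $N_k$ on $\overline\Omega$ and the off-diagonal regularity of Proposition~\ref{prop:off-diagonal-regularity}. Because $\overline\Omega\subset D$ is compact and separated from $\mathcal H$, $w_k$ is comparable to a positive constant on a neighbourhood of $\overline\Omega$. So $L^p(\Omega,d\mu_k)$ and $L^p(\Omega,dx)$ coincide with comparable norms, and we may work with Lebesgue measure. Fix compact sets $K=\overline\Omega\Subset U\Subset D$.

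\emph{Sup bound.} For $x,y\in\overline\Omega$ in the same chamber, $d(x,y)=|x-y|$. By \eqref{eq:newton-log-majorant}, taken uniformly over poles in $K$ as in Proposition~\ref{prop:regular-poles-bmo}, we get
\[
 0\leq N_k(x,y)\leq C\Bigl(1+\log^+\frac{1}{|x-y|}\Bigr),\qquad x,y\in\overline\Omega .
\]
Reflected orbit points of $y$ lie outside $D$, so they do not enter. The right-hand side lies in $L^{p'}(\Omega)$ uniformly in $x$ for every $p'<\infty$. H\"older's inequality then gives $\|N_\Omega f\|_{L^\infty}\leq C\|f\|_{L^p}$.

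\emph{H\"older seminorm.} Fix $x,x'\in\overline\Omega$ with $h=|x-x'|$ small, and split $\Omega$ into $E_1=\{y:|x-y|<2h\}$ and $E_2=\Omega\setminus E_1$.
\begin{itemize}
\item On $E_1$, bound $|N_k(x,y)|+|N_k(x',y)|$ by the logarithmic majorant. The set $E_1$ lies in $\{|x'-y|<3h\}$, so H\"older's inequality gives a bound $C\|f\|_p\,\|1+\log^+(1/|\cdot|)\|_{L^{p'}(B(0,3h))}\leq C h^{2/p'}\log(1/h)\|f\|_p$.
\item On $E_2$, we have $2|x'-x|\leq|x-y|$, so \eqref{eq:newton-regularity-x} gives $|N_k(x',y)-N_k(x,y)|\leq Ch/|x-y|$. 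The contribution is therefore at most $Ch\|f\|_p\,\bigl\|\,|x-\cdot|^{-1}\mathbf 1_{2h\leq|x-\cdot|\leq \mathrm{diam}\,\Omega}\bigr\|_{L^{p'}}$.
\end{itemize}
A radial computation in the plane gives that this norm is $\lesssim h^{2/p'-1}$ if $p'>2$, $\lesssim(\log(1/h))^{1/2}$ if $p'=2$, and $\lesssim 1$ if $p'<2$.

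Since $2/p'=2-2/p$, the two contributions give $|N_\Omega f(x)-N_\Omega f(x')|\lesssim h^{\min\{1,2-2/p\}}\log(1/h)\,\|f\|_p$. The logarithm is absorbed by taking $\alpha$ strictly below $\min\{1,2-2/p\}$. For $p=\infty$ we take $p'=1$ and obtain $h\log(1/h)$. For large $h$ the sup bound suffices. Continuity of this representative, and its extension to $\overline\Omega$, follow from the uniform estimate.

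The main obstacle is legitimacy of using \eqref{eq:newton-regularity-x} with $x,x',y\in K=\overline\Omega$. The proposition requires $K\Subset U\Subset D$, and $\overline\Omega\subset D$ holds because $\delta_\Omega>0$ and $\Omega\subset D$. A second point is that the logarithmic majorant must be uniform in the pole $y\in\overline\Omega$. This follows from the uniform choice of $\rho_a$ and $C_a$ over compacts, noted at the end of the proof of Proposition~\ref{prop:regular-poles-bmo}. Beyond this, only the bookkeeping of exponents at $p'=2$ needs care.
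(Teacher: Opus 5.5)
Your proof is correct and follows essentially the same route as the paper. Both arguments use the uniform logarithmic majorant and H\"older's inequality on a ball of radius comparable to $h$ around $x$, and both apply the first-variable estimate \eqref{eq:newton-regularity-x} from Proposition~\ref{prop:off-diagonal-regularity} with $K=\overline\Omega$ on the complement. The differences are cosmetic: you take the near set to be $B(x,2h)$ instead of $B(x,4h)\cup B(x',4h)$, and you estimate the far part by a single radial $L^{p'}$ computation instead of summing over dyadic annuli.
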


\begin{proof}
Because $\delta_\Omega>0$, the measure $d\mu_k=w_k\,dx$ is comparable with Lebesgue measure on $\overline\Omega$.
Choose an open set $U$ such that
$\overline\Omega\Subset U\Subset D$.  For $f\in L^p(\Omega,d\mu_k)$ and
$x\in\overline\Omega$, define
\[
 N_\Omega f(x)=\int_\Omega N_k(x,y)f(y)\,d\mu_k(y).
\]
This integral is absolutely convergent, uniformly in
$x\in\overline\Omega$: near $y=x$ the compact-uniform regular-pole bound is
logarithmic, and away from $x$ the kernel is bounded on compact subsets of
the chamber.  In particular,
\[
 \sup_{x\in\overline\Omega}|N_\Omega f(x)|
 \leq C_{\Omega,p}\|f\|_{L^p(\Omega,d\mu_k)}.
\]

Let $x,x'\in\overline\Omega$, set $h=|x-x'|$, and split
\[
 \Omega=E_1\cup E_2,
 \qquad E_1=B(x,4h)\cup B(x',4h),
 \qquad E_2=\Omega\setminus E_1.
\]
Let $p'$ be the conjugate exponent, with the convention $p'=1$ when $p=\infty$. If $h$ is bounded
below by a fixed positive number, the estimate follows from the $L^p\to L^\infty$ bound obtained
from the logarithmic kernel estimate. Hence assume $h$ is small. Then
\[
 N_\Omega f(x)-N_\Omega f(x')
 =\int_\Omega\bigl(N_k(x,y)-N_k(x',y)\bigr)f(y)\,d\mu_k(y).
\]
On $E_1$, the logarithmic regular-pole bound and H\"older's inequality give
\[
 \int_{E_1}\bigl(|N_k(x,y)|+|N_k(x',y)|\bigr)|f(y)|\,d\mu_k(y)
 \leq Ch^{2/p'}(1+|\log h|)\|f\|_{L^p}.
\]
This is $O(h^\alpha)\|f\|_{L^p}$ for every $\alpha<2/p'=2-2/p$.

On $E_2$, Proposition~\ref{prop:off-diagonal-regularity}, applied with
$K=\overline\Omega$ and the set $U$ chosen above, gives
\[
 |N_k(x,y)-N_k(x',y)|\leq C\frac{|x-x'|}{|x-y|}
\]
whenever $2|x-x'|\leq|x-y|$. Summing over dyadic annuli gives
\[
 \int_{E_2}|N_k(x,y)-N_k(x',y)|\,|f(y)|\,d\mu_k(y)
 \leq Ch^\alpha\|f\|_{L^p}
\]
for every $\alpha<\min\{1,2/p'\}$. Indeed, on the annulus
$2^jh\leq|x-y|<2^{j+1}h$, the kernel difference
contributes $2^{-j}$, while H\"older's inequality contributes $(2^jh)^{2/p'}$; summing the resulting geometric series yields $h^\alpha$ for every $\alpha<\min\{1,2/p'\}$. Combining the near and far estimates proves
the claim on $\overline\Omega$.
\end{proof}

\subsubsection{Sharpness and necessity}

The following zero-mean radial test captures the blow-up as a
regular evaluation point approaches a reflecting line.

\begin{lemma}\label{lem:radial-test}
There exist a radial function $q\in C_c^\infty(B(0,1))$, with
\[
 \int_{\R^2}q(z)\,dz=0,
\]
and constants $c,c_0>0$, depending only on $R$ and $k$, such that
\begin{equation}\label{eq:radial-test-lower}
 \int_{B(0,1)}N_k(p,p+rz)q(z)\,dz\geq\frac c{w_k(p)}
\end{equation}
whenever $p\in\R^2_{\mathrm{reg}}$ and $0<r<c_0\dist(p,\mathcal H)$.
\end{lemma}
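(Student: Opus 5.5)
The idea is to let the zero mean of $q$ cancel the large-time part of the heat integral, and to get the logarithmic gain from the small-time Gaussian lower bound on a tiny inner bump. I take
\[
 q=q_1-q_2,
\]
where $q_1\geq0$ is a radial smooth bump supported in $\{|z|<\varepsilon\}$, $q_2\geq0$ is a radial smooth bump supported in the annulus $\{1/2<|z|<1\}$, and $\int q_1\,dz=\int q_2\,dz=1$. Then $q$ is radial, lies in $C_c^\infty(B(0,1))$, and has zero Lebesgue mean. The parameter $\varepsilon\in(0,1/4)$ will be fixed at the end, depending only on $R$ and $k$. By Fubini,
\[
 \int_{B(0,1)} N_k(p,p+rz)\,q(z)\,dz=\int_0^\infty\!\!\int\Gamma_k(t,p,p+rz)\,q(z)\,dz\,dt,
\]
and I split the time integral at $t=r^2$.

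Two geometric inputs come first. Set $\delta=\dist(p,\mathcal H)$.
\begin{enumerate}
\item[(1)] By \eqref{eq:volume-estimate}, each factor satisfies $|\langle\alpha,p\rangle|+\sqrt t\geq|\langle\alpha,p\rangle|$. Hence $\mu_k(B(p,\sqrt t))\gtrsim t\,w_k(p)$ for \emph{all} $t>0$, with constants depending only on $R,k$. For $\sqrt t\leq c_1\delta$ one also has $\mu_k(B(p,\sqrt t))\simeq t\,w_k(p)$.
\item[(2)] For $\sigma\neq e$, $|p-\sigma p|\geq2\delta$, since a reflection moves $p$ by twice its distance to the reflecting line, and a rotation in $G$ moves $p$ at least as much. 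So if $c_0$ is small and $r<c_0\delta$, then for $|z|<1$ we have $d(p,p+rz)=r|z|$ and $r\leq c_1\delta$.
\end{enumerate}

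\textbf{Small times, $t<r^2$.} For the inner bump I use the lower bound in \eqref{eq:basic-heat-size} together with (1), restricted to $\varepsilon^2r^2<t<r^2$. For $|z|<\varepsilon$ the exponent $C r^2|z|^2/t$ is then bounded, so
\[
 \int_0^{r^2}\!\!\int\Gamma_k\,q_1\geq\frac{c}{w_k(p)}\int_{\varepsilon^2r^2}^{r^2}\frac{dt}{t}=\frac{c\log(1/\varepsilon^2)}{w_k(p)}.
\]
For the annulus bump, the upper bound in \eqref{eq:basic-heat-size} with $d=r|z|\geq r/2$ gives
\[
 \int_0^{r^2}\!\!\int\Gamma_k\,q_2\leq\frac{C}{w_k(p)}\int_0^{r^2}t^{-1}e^{-cr^2/(4t)}\,dt\leq\frac{C}{w_k(p)}.
\]
This constant does not depend on $\varepsilon$.

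\textbf{Large times, $t\geq r^2$.} Here I use the zero mean to replace $\Gamma_k(t,p,p+rz)$ by $\Gamma_k(t,p,p+rz)-\Gamma_k(t,p,p)$. For $t\geq4r^2$, \eqref{eq:basic-heat-regularity} and (1) bound this difference by $C\,r\,t^{-1/2}/(t\,w_k(p))$. For $r^2\leq t<4r^2$, the size bound alone gives $C/(t\,w_k(p))$. Integrating,
\[
 \Bigl|\int_{r^2}^\infty\!\!\int\Gamma_k\,q\Bigr|\leq\frac{C}{w_k(p)}\Bigl(1+r\int_{4r^2}^\infty t^{-3/2}\,dt\Bigr)\leq\frac{C}{w_k(p)}.
\]
This is uniform in $\varepsilon$ because $\int(q_1+q_2)=2$, and it needs no cutoff at $\delta$ thanks to the global lower volume bound in (1). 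Combining the two ranges,
\[
 \int_{B(0,1)} N_k(p,p+rz)\,q(z)\,dz\geq\frac{2c\log(1/\varepsilon)-C}{w_k(p)},
\]
and choosing $\varepsilon$ small gives \eqref{eq:radial-test-lower}.

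The main obstacle is uniformity as $p$ approaches $\mathcal H$, where $w_k(p)\to0$. Every constant must be independent of $p$ and $\delta$. This is why the volume lower bound $\mu_k(B(p,\sqrt t))\gtrsim t\,w_k(p)$ for all $t$, and the orbit separation $|p-\sigma p|\geq2\delta$, are the key points to check carefully.
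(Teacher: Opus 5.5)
Your proof is correct and follows the paper's argument. The zero mean, together with the regularity estimate and the global bound $\mu_k(B(p,\sqrt t))\gtrsim t\,w_k(p)$, neutralizes large times. The Gaussian lower bound on an inner disc of radius $\varepsilon$ (the paper's $e^{-L}$) produces $\log(1/\varepsilon)/w_k(p)$, and the annular negative part costs only $O(1/w_k(p))$. The one difference is the choice of $q$: your two-bump function suffices for the lemma as stated, whereas the paper takes $q=h(|z|)h'(|z|)/|z|$ so that Lemma~\ref{lem:Jacobian-realization} can later write $q$ as the Jacobian of two compactly supported functions $U,V$, which your $q$ would need a separate argument to provide.
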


\begin{proof}
Fix a large number $L$. Choose $h\in C_c^\infty([0,\infty))$ such that $h(s)=s$ near zero, $h'\geq0$ on
$(0,e^{-L})$, $h$ is a positive constant on $[e^{-L},1/2]$, $h'\leq0$ on $(1/2,3/4)$, and $h=0$ on $[3/4,\infty)$.
The transitions may be chosen flat at their endpoints. For $z\neq0$, put
\[
 q(z)=\frac{h(|z|)h'(|z|)}{|z|},
\]
and set $q(0)=1$. Since $h(s)=s$ near zero, $q$ is smooth at the origin; the flat transitions make
$q\in C_c^\infty(B(0,1))$. Thus $q\geq0$ on $|z|<e^{-L}$, $q\leq0$ on $1/2<|z|<3/4$, and $q=0$ elsewhere. If
$H$ is the value of $h$ on $[e^{-L},1/2]$, then
\[
 \int_{\{q>0\}}q\,dz=-\int_{\{q<0\}}q\,dz=\pi H^2=:M_L,
 \qquad \int q\,dz=0.
\]

Write $w_p=w_k(p)$ and fix $A_0>16$. The root system is finite, so after decreasing $c_0$, the
ball $B(p,\sqrt{A_0}r)$ stays in the open Weyl chamber containing $p$. In one closed chamber the identity
minimizes orbit distance. The volume formula therefore gives
\[
 \mu_k(B(p,\sqrt t))\simeq w_pt,
 \qquad d(p,y)=|p-y|
\]
for $0<t<A_0r^2$ and $y\in B(p,r)$. Hence the basic heat-kernel estimates yield
\begin{equation}\label{eq:local-two-sided-heat}
 \frac c{w_pt}e^{-C|p-y|^2/t}
 \leq\Gamma_k(t,p,y)
 \leq\frac C{w_pt}e^{-c|p-y|^2/t}
\end{equation}
in this range. Decompose
\[
 N_k(p,y)=S(y)+R(y),
 \qquad S(y)=\int_0^{A_0r^2}\Gamma_k(t,p,y)\,dt.
\]
The lower estimate in \eqref{eq:local-two-sided-heat} gives
\[
 S(p+rz)\geq\frac{cL-C}{w_p}
 \qquad(|z|<e^{-L}),
\]
whereas the upper estimate gives
\[
 S(p+rz)\leq\frac C{w_p}
 \qquad(1/2<|z|<3/4).
\]
For the contribution $R$ of $t\geq A_0r^2$, \eqref{eq:basic-heat-regularity} and the global consequence
$\mu_k(B(p,\sqrt t))\gtrsim w_pt$ of \eqref{eq:volume-estimate} give
\[
 |R(p+rz)-R(p)|
 \leq Cr|z|\int_{A_0r^2}^\infty\frac{dt}{w_pt^{3/2}}
 \leq\frac C{w_p},
 \qquad |z|<1.
\]
Using $\int q\,dz=0$ to remove the constant part of $R$, and absorbing the negative short-time
contribution and the large-time oscillation into $C$, we obtain
\[
 \int_{B(0,1)}N_k(p,p+rz)q(z)\,dz
 \geq\frac{M_L}{w_p}(cL-C).
\]
Choose $L$ once and for all so that the last bracket is positive. This proves \eqref{eq:radial-test-lower}.
\end{proof}

\begin{theorem}\label{thm:atomic-necessity}
Let $\Omega\subset\R^2$ be bounded and open. If
\[
 \delta_\Omega=\dist(\overline\Omega,\mathcal H)=0,
\]
then there are local atoms $a_j$ in $\Omega$ such that
\begin{equation}\label{eq:atomic-blowup}
 \|a_j\|_{H^1_{\mathrm{at}}(\Omega,\mu_k)}\leq1,
 \qquad \|N_\Omega a_j\|_{L^\infty(\Omega)}\longrightarrow\infty.
\end{equation}
Consequently, there is no bounded extension $H^1_{\mathrm{at}}(\Omega,\mu_k)\to L^\infty(\Omega)$ that agrees with the Newton
potentials of local atoms at regular evaluation points.
\end{theorem}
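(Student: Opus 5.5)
The plan is to turn the radial test of Lemma~\ref{lem:radial-test} into atoms, evaluated at regular points of $\Omega$ that approach $\mathcal H$.

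\textbf{Choice of evaluation points.} Since $\delta_\Omega=0$ and $\Omega$ is open, I first choose regular points $p_j\in\Omega\setminus\mathcal H$ with $\dist(p_j,\mathcal H)\to0$. This is possible because $\mathcal H$ is a finite union of lines and is therefore closed with empty interior. As $p_j$ approaches a reflecting line with positive multiplicity, $w_k(p_j)\to0$.

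\textbf{Construction of the atoms.} I then fix radii $0<r_j<c_0\dist(p_j,\mathcal H)$ with $B(p_j,2r_j)\Subset\Omega$, and I would try
\[
 a_j(y)=\lambda_j\,q\!\left(\frac{y-p_j}{r_j}\right)w_k(y)^{-1}w_k(p_j),
\]
where $q$ is the radial function from Lemma~\ref{lem:radial-test}. It is cleaner, though, to keep the Lebesgue form $a_j(y)=\lambda_j q((y-p_j)/r_j)/w_k(y)$. Then $a_j\,d\mu_k=\lambda_j q((y-p_j)/r_j)\,dy$, so the cancellation $\int a_j\,d\mu_k=0$ follows exactly from $\int q\,dz=0$. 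On $B_j=B(p_j,r_j)$ we have $w_k\simeq w_k(p_j)$, because $r_j$ is a small multiple of the distance to $\mathcal H$. Also $\mu_k(B_j)\simeq w_k(p_j)r_j^2$, so choosing $\lambda_j=c_1 r_j^{-2}$ with $c_1$ small (depending only on $\|q\|_\infty$ and the comparability constants) gives $\|a_j\|_\infty\le\mu_k(B_j)^{-1}$. Each $a_j$ is therefore a local atom, and its atomic norm is at most $1$.

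\textbf{Lower bound at the center.} Next I evaluate the potential at $x=p_j$, which is a regular point. By Corollary~\ref{cor:continuous-atomic-potentials} and the logarithmic majorant \eqref{eq:newton-log-majorant} (applied at the regular pole $p_j$), $N_\Omega a_j(p_j)$ is the absolutely convergent integral
\[
 \int N_k(p_j,y)a_j(y)\,d\mu_k(y)=c_1\int_{B(0,1)}N_k(p_j,p_j+r_jz)q(z)\,dz\ \ge\ \frac{c\,c_1}{w_k(p_j)},
\]
by \eqref{eq:radial-test-lower} after the substitution $y=p_j+r_jz$. This value agrees with the $H^1$--BMO pairing by the cancellation formula.

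To pass from one point to the $L^\infty$ norm, I use continuity: Corollary~\ref{cor:continuous-atomic-potentials} requires $\delta_\Omega>0$, which fails here. I would instead argue directly. Around $p_j$ the kernel is integrable uniformly in $x$ near $p_j$ thanks to the log majorant with constants uniform on a small compact neighborhood of $p_j$. Since $\Gamma_k$ is jointly continuous, dominated convergence gives continuity of $N_\Omega a_j$ at $p_j$. Hence $|N_\Omega a_j|\ge c/(2w_k(p_j))$ on a neighborhood of positive measure, and
\[
 \|N_\Omega a_j\|_{L^\infty(\Omega)}\gtrsim\frac1{w_k(p_j)}\to\infty.
\]

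\textbf{Consequence and main obstacle.} The final statement then follows: any bounded extension agreeing with atom potentials at regular points would force $|N_\Omega a_j(p_j)|\le C$, contradicting the lower bound. I expect the main obstacle to be this continuity and regular-point evaluation step, together with checking that the weight comparability on $B_j$ holds with constants independent of $j$. Both points should follow because every ball used stays inside a fixed multiple of its own distance to $\mathcal H$.
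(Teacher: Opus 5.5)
Your proposal is correct and follows essentially the same route as the paper. It uses the same atoms $c_*r_j^{-2}q((y-p_j)/r_j)/w_k(y)$ supported on balls of radius $r_j<c_0\dist(p_j,\mathcal H)$, the lower bound $c/w_k(p_j)$ at the center from Lemma~\ref{lem:radial-test}, and continuity near the regular point $p_j$, which comes from the local logarithmic majorant, to pass from the point value to the $L^\infty$ norm. As you noticed, the appeal to Corollary~\ref{cor:continuous-atomic-potentials} is not needed (and not available when $\delta_\Omega=0$); the logarithmic bound at $p_j$ gives both absolute convergence and continuity.
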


\begin{proof}
Choose $p_j\in\Omega\setminus\mathcal H$ such that
\[
 \delta_j:=\dist(p_j,\mathcal H)\longrightarrow0.
\]
After passing to a subsequence, $p_j\to p_*\in\overline\Omega\cap\mathcal H$, and therefore $w_k(p_j)\to0$. Put
$d_j=\dist(p_j,\partial\Omega)$ and choose
\[
 0<r_j<c_0\delta_j,
 \qquad 2r_j<d_j,
\]
where $c_0$ is as in Lemma~\ref{lem:radial-test}. Define
\[
 q_j(y)=r_j^{-2}q\!\left(\frac{y-p_j}{r_j}\right),
\]
where $q$ is the function in Lemma~\ref{lem:radial-test}. Then
$\int q_j\,dy=0$, and
\[
 \supp q_j\subset B(p_j,r_j)\Subset\Omega.
\]
On $B(p_j,r_j)$, the weight is comparable with $w_k(p_j)$ and
$\mu_k(B(p_j,r_j))\simeq w_k(p_j)r_j^2$. Hence a
sufficiently small constant $c_*>0$, independent of $j$, makes
\[
 a_j(y)=
 \begin{cases}
 \displaystyle c_*\frac{q_j(y)}{w_k(y)},&y\in B(p_j,r_j),\\[6pt]
 0,&y\notin B(p_j,r_j),
 \end{cases}
\]
a local atom: its weighted integral vanishes because $\int q_j\,dy=0$, and its $L^\infty$ norm is at most
$\mu_k(B(p_j,r_j))^{-1}$. Changing variables and using Lemma~\ref{lem:radial-test}, we get
\begin{align*}
 N_\Omega a_j(p_j)
 &=c_*\int_\Omega N_k(p_j,y)q_j(y)\,dy\\
 &=c_*\int_{B(0,1)}N_k(p_j,p_j+r_jz)q(z)\,dz
 \geq\frac c{w_k(p_j)}.
\end{align*}
For each fixed $j$, this potential is continuous near $p_j$: the source is bounded and supported in
a ball separated from $\mathcal H$, and the local kernel singularity is logarithmic. Hence its essential
supremum is at least its value at $p_j$. Since $w_k(p_j)\to0$, \eqref{eq:atomic-blowup} follows.
\end{proof}

The preceding sufficiency and necessity results give the exact geometric criterion.

\begin{corollary}\label{cor:atomic-geometric-criterion}
For every bounded open set $\Omega\subset\R^2$, the natural atom-level Newton map
admits a bounded extension
\[
 H^1_{\mathrm{at}}(\Omega,\mu_k)\longrightarrow L^\infty(\Omega)
\]
if and only if $\dist(\overline\Omega,\mathcal H)>0$.
\end{corollary}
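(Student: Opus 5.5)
This is a two-directional statement, and both directions are already in hand. The plan is simply to combine Theorem~\ref{thm:atomic-newton-potential}, which gives sufficiency, with Theorem~\ref{thm:atomic-necessity}, which gives necessity. The only care needed is to make precise what ``the natural atom-level Newton map'' means, so that the two theorems speak about the same object.

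I would fix the following interpretation. A bounded extension is a bounded linear operator $T:H^1_{\mathrm{at}}(\Omega,\mu_k)\to L^\infty(\Omega)$ with two properties. First, for each local atom $a$, $Ta$ agrees with the Newton potential of $a$. Second, when $\dist(\overline\Omega,\mathcal H)=0$, this agreement holds at least at regular evaluation points near the support of $a$, where the potential is continuous.

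\emph{Sufficiency.} Assume $\delta_\Omega>0$. Then Theorem~\ref{thm:atomic-newton-potential} directly produces such an operator with norm at most $M_\Omega=\sup_{x\in\overline\Omega}\|N_k(x,\cdot)\|_{\BMO(\mu_k)}$. This quantity is finite by \eqref{eq:uniform-regular-poles-bmo}. For a single atom, the operator is given by the concrete integral $\int_B N_k(x,y)a(y)\,d\mu_k(y)$, so it is indeed the natural atom-level map. By Corollary~\ref{cor:continuous-atomic-potentials}, this value is continuous, so there is no ambiguity in the representative.

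\emph{Necessity.} Assume $\delta_\Omega=0$ and suppose, for contradiction, that a bounded extension $T$ exists. Take the atoms $a_j$ of Theorem~\ref{thm:atomic-necessity}. They satisfy $\|a_j\|_{H^1_{\mathrm{at}}}\le 1$, so $\|Ta_j\|_{L^\infty}\le\|T\|$.

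The main obstacle is the passage from a pointwise lower bound to an $L^\infty$ lower bound. To handle it, I would recall from that proof that $N_\Omega a_j$ is continuous near $p_j$, with $N_\Omega a_j(p_j)\ge c/w_k(p_j)$. Since $Ta_j=N_\Omega a_j$ on a neighborhood of $p_j$ consisting of regular points, continuity gives $\|Ta_j\|_{L^\infty}\ge c/w_k(p_j)$. This tends to $\infty$ because $w_k(p_j)\to0$, which contradicts the bound $\|T\|$.

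Combining the two directions gives the equivalence.
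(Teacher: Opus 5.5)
Your proposal is correct and follows the paper's own argument: the corollary combines Theorem~\ref{thm:atomic-newton-potential} for sufficiency with Theorem~\ref{thm:atomic-necessity} for necessity. In the necessity direction, continuity of $N_\Omega a_j$ near $p_j$ turns the pointwise lower bound $c/w_k(p_j)$ into an $L^\infty$ lower bound, and your reading of the ``natural atom-level map'' as agreement with atom potentials at regular evaluation points matches how the paper formulates that theorem.
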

\section{A Wente-type Newton-potential estimate}\label{sec:wente}

The first result in this section is an intrinsic Dunkl version of the
Coifman--Lions--Meyer--Semmes Jacobian estimate, which is then applied to the
global Dunkl Newton kernel.  We subsequently consider the Euclidean Jacobian.
For data supported in an open Weyl chamber, a change of unknown identifies the two
source measures, and the corresponding energies are related by an exact
identity.  All functions in the Jacobian statements below are real-valued.

\subsection{The Dunkl Hardy space and its dual}

For $f\in L^1(\mathbb R^2,d\mu_k)$, put
\[
  \mathcal M_{\Delta_k}f(x)
  =\sup_{t>0}\ \sup_{|x-y|<t}
       \big|e^{t^2\Delta_k}f(y)\big|.
\]
The Dunkl Hardy space is
\[
  H_k^1(\mathbb R^2)=H_{\Delta_k}^1(\mathbb R^2)
  =\big\{f\in L^1(d\mu_k):
       \mathcal M_{\Delta_k}f\in L^1(d\mu_k)\big\},
\]
with norm $\|f\|_{H_k^1}=\|\mathcal M_{\Delta_k}f\|_{L^1(d\mu_k)}$.
The theorem of Dziuba\'nski and Hejna \cite[Theorem~1.5]{DHatomic} identifies this space
with the Coifman--Weiss atomic Hardy space, with equivalent norms, on
\[
  (\mathbb R^2,|\cdot|,\mu_k).
\]
Consequently, Coifman--Weiss duality
\cite[Theorem~B, p.~593]{CoifmanWeiss}, with $p=1$, gives an identification
with equivalent norms,
\[
  (H_k^1)^*\simeq\operatorname{BMO}_k,
  \qquad
  \operatorname{BMO}_k
  :=\operatorname{BMO}(\mu_k)/\{\text{constant functions}\},
\]
where
\[
  \|b\|_{\operatorname{BMO}_k}
  =\sup_B\frac1{\mu_k(B)}
       \int_B|b-b_{B,k}|\,d\mu_k,
  \qquad
  b_{B,k}=\frac1{\mu_k(B)}\int_B b\,d\mu_k,
\]
the supremum is over Euclidean balls, and
\[
 \|\Lambda_b\|_{(H_k^1)^*}\simeq
 \|b\|_{\operatorname{BMO}_k},
 \qquad
 \Lambda_b(f)=\int_{\mathbb R^2}f b\,d\mu_k.
\]
The pairing is initially defined on finite atomic sums and then extended
continuously.  This is not the orbit space
$\operatorname{BMO}_G$.  The latter is a proper subspace by
Theorem~\ref{thm:orbit-balance}, and it is not the dual space used below.

\subsection{A Dunkl--CLMS estimate in dimension two}

Write
\[
  L_k=-\Delta_k,
  \qquad
  R_j=T_{e_j}L_k^{-1/2},
  \qquad
  \nabla_k u=(T_{e_1}u,T_{e_2}u).
\]
For smooth functions $u,v$, define
\[
  \mathcal J_k(u,v)
  =T_{e_1}u\,T_{e_2}v-T_{e_2}u\,T_{e_1}v.
\]

Han--Lee--Li--Wick \cite{HanLeeLiWick} proved the first commutator theorem
for the Dunkl Riesz transforms.  The upper bound in that paper was stated
with the orbit BMO norm.  The later theorem of Dziuba\'nski--Hejna
\cite[(1.4) and Theorem~3.1]{DHcommutators}, applied with $p=2$ to the Dunkl
Riesz transforms, gives the Euclidean-ball estimate
\[
  \|[b,R_j]h\|_{L^2(d\mu_k)}
  \leq C_k\|b\|_{\operatorname{BMO}_k}
          \|h\|_{L^2(d\mu_k)},
  \qquad [b,R_j]=bR_j-R_jb.
\]
This later upper bound is the one needed here.

\begin{theorem}\label{thm:dunkl-CLMS}
For $u,v\in C_c^\infty(\mathbb R^2)$,
\[
  \mathcal J_k(u,v)\in H_k^1(\mathbb R^2)
\]
and
\[
  \|\mathcal J_k(u,v)\|_{H_k^1}
  \leq C_k
  \|\nabla_k u\|_{L^2(d\mu_k)}
  \|\nabla_k v\|_{L^2(d\mu_k)}.
\]
Consequently, the Jacobian map extends continuously to the completion of
$C_c^\infty(\mathbb R^2)$ in the Dunkl gradient norm.
\end{theorem}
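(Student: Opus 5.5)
We prove the estimate by duality, following the commutator route of Coifman--Lions--Meyer--Semmes. By the identification $(H_k^1)^*\simeq\BMO_k$ recalled above, together with the atomic characterization of \cite{DHatomic}, it suffices to show that for every bounded compactly supported $b\in\BMO_k$ (a class that is weak-$*$ adequate for testing $H^1$ membership of an $L^1$ function with $\int\mathcal J_k\,d\mu_k=0$)
\[
 \Bigl|\int_{\R^2}\mathcal J_k(u,v)\,b\,d\mu_k\Bigr|
 \le C_k\|b\|_{\BMO_k}\|\nabla_ku\|_{L^2(d\mu_k)}\|\nabla_kv\|_{L^2(d\mu_k)} .
\]
First we note that $\mathcal J_k(u,v)\in L^1(d\mu_k)$ by Cauchy--Schwarz. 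To pass from this duality bound to the maximal-function definition of $H_k^1$, we use the standard fact that an $L^1$ function whose pairings against a norming subclass of $\BMO_k$ are controlled lies in the atomic space, since the atomic space is a dual of VMO in the Coifman--Weiss setting. Alternatively, we can test directly with $b=\Gamma_k(t^2,y,\cdot)$-type kernels appearing in $\mathcal M_{\Delta_k}$, but the duality route is cleaner.

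The key algebraic step is to write the Jacobian as a sum of commutators. Set $f=L_k^{1/2}u$ and $g=L_k^{1/2}v$, so that $T_{e_j}u=R_jf$, $T_{e_j}v=R_jg$, and $\|f\|_{L^2(d\mu_k)}=\|\nabla_ku\|_{L^2(d\mu_k)}$ by $\sum_jR_j^*R_j=I$ (Plancherel for the Dunkl transform). The Dunkl Riesz transforms are skew-adjoint on $L^2(d\mu_k)$ and commute with each other, because they are Dunkl multipliers $-i\xi_j/|\xi|$. Hence $R_1R_2g-R_2R_1g=0$, and
\[
 \int \bigl(R_1f\,R_2g-R_2f\,R_1g\bigr)b\,d\mu_k
 =-\int f\,\bigl(R_1(bR_2g)-R_2(bR_1g)\bigr)d\mu_k
 =\int f\,\bigl([b,R_1]R_2g-[b,R_2]R_1g\bigr)d\mu_k ,
\]
where the commuting of $R_1,R_2$ lets us insert the vanishing term $b(R_1R_2g-R_2R_1g)$. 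Applying Cauchy--Schwarz and the Dziuba\'nski--Hejna commutator bound $\|[b,R_j]\|_{L^2\to L^2}\le C_k\|b\|_{\BMO_k}$ quoted above, together with the $L^2(d\mu_k)$-boundedness of $R_j$, gives the displayed pairing estimate.

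We must take care not to replace the Dunkl product rule by the Euclidean one; the argument above never differentiates $b$ or a product, and uses only skew-adjointness and commutativity of the $R_j$. These properties hold on $L^2(d\mu_k)$ through the Dunkl transform, since $T_{e_j}$ becomes multiplication by $i\xi_j$. For $u\in C_c^\infty$, the functions $f,g$ lie in $L^2(d\mu_k)$ because $\widehat u$ is Schwartz-like and $|\xi|\,\widehat u\in L^2$.

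The main obstacle is the passage from the duality inequality to genuine membership in $H_k^1$, with norm control in the maximal-function norm. Our first approach will be to invoke the Coifman--Weiss theory, in which $H^1_{\mathrm{at}}$ on a space of homogeneous type is the dual of $\mathrm{VMO}$, and to note that compactly supported smooth $b$ are dense in $\mathrm{VMO}$, so the pairing bound yields an $H^1_{\mathrm{at}}$ element equal to $\mathcal J_k(u,v)$. The norm equivalence of \cite{DHatomic} then converts this to $\|\cdot\|_{H_k^1}$. The final assertion follows by bilinearity and density: the Cauchy property in the Dunkl gradient norm transfers to $H_k^1$.
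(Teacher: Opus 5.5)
Your proof is correct, and its analytic core is the paper's: the same substitution $f=L_k^{1/2}u$, $g=L_k^{1/2}v$, the same identity $\int b\,\mathcal J_k(u,v)\,d\mu_k=\langle f,[b,R_1]R_2g-[b,R_2]R_1g\rangle$ for bounded $b$, and the same Dziuba\'nski--Hejna commutator bound. The difference is in the last step, which you treat as the main obstacle but which the paper handles almost for free.

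For $u,v\in C_c^\infty(\R^2)$, the functions $T_{e_j}u$ and $T_{e_j}v$ are smooth and supported in the finite orbit of the supports. So $\mathcal J_k(u,v)$ is bounded and compactly supported. Its $\mu_k$-integral vanishes, by skew-symmetry and commutativity of the Dunkl operators; this also follows from your own identity with $b\equiv1$, since $[1,R_j]=0$. Hence $\mathcal J_k(u,v)$ is a constant multiple of a single Coifman--Weiss atom and lies in $H_k^1$ with no further work.

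The paper then gets the norm bound from the already quoted duality $(H_k^1)^*\simeq\operatorname{BMO}_k$. It extends the pairing estimate from bounded $b$ to all $b\in\operatorname{BMO}_k$ by truncating, $b_N=\max\{-N,\min\{b,N\}\}$, using $\|b_N\|_{\operatorname{BMO}_k}\le2\|b\|_{\operatorname{BMO}_k}$. Dominated convergence then closes the argument, which is legitimate because $\mathcal J_k(u,v)\in L^\infty_c$.

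You instead test only against compactly supported bounded $b$ and invoke the Coifman--Weiss duality $\mathrm{VMO}^*=H^1$ on $(\R^2,|\cdot|,\mu_k)$. You then identify the representing $H^1$ element with $\mathcal J_k(u,v)$ by testing against $C_c$ functions. This is valid. It has the advantage of needing no truncation and of working for a general $L^1$ function not known in advance to be an atom multiple, as would happen for Sobolev data.

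The costs are that it imports an extra duality theorem the paper never uses. Also, your first formulation, that control of pairings against a ``norming subclass'' gives membership, is not a membership criterion by itself; only the $\mathrm{VMO}$ version makes it rigorous. Since the theorem is stated for $C_c^\infty$ data and the extension is by density, the paper's atom observation is the more economical route.
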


\begin{proof}
Put
\[
  f=L_k^{1/2}u,
  \qquad
  g=L_k^{1/2}v.
\]
The Dunkl-transform multiplier identities \cite{ADH} give
\[
  T_{e_j}u=R_jf,
  \qquad
  T_{e_j}v=R_jg,
\]
and
\[
  \|f\|_{L^2(d\mu_k)}=\|\nabla_k u\|_{L^2(d\mu_k)},
  \qquad
  \|g\|_{L^2(d\mu_k)}=\|\nabla_k v\|_{L^2(d\mu_k)}.
\]
The operators $R_1,R_2$ commute and are skew-adjoint on $L^2(d\mu_k)$.
For a bounded measurable function $b$, skew-adjointness and commutation give
\begin{align*}
 \int_{\mathbb R^2}b\,\mathcal J_k(u,v)\,d\mu_k
 &=\int_{\mathbb R^2}b
       \big(R_1fR_2g-R_2fR_1g\big)\,d\mu_k\\
 &=\big\langle f,
       -R_1(bR_2g)+R_2(bR_1g)\big\rangle_{L^2(d\mu_k)}\\
 &=\big\langle f,
       [b,R_1]R_2g-[b,R_2]R_1g
     \big\rangle_{L^2(d\mu_k)}.
\end{align*}
In the last line the two terms containing $bR_1R_2g$ cancel.  The
commutator estimate and the $L^2$ boundedness of the Riesz transforms now
give
\[
 \left|\int_{\mathbb R^2}b\,\mathcal J_k(u,v)\,d\mu_k\right|
 \leq C_k\|b\|_{\operatorname{BMO}_k}
       \|\nabla_k u\|_{L^2(d\mu_k)}
       \|\nabla_k v\|_{L^2(d\mu_k)}.
\]
For general $b\in\operatorname{BMO}_k$, let
$b_N=\max\{-N,\min\{b,N\}\}$.  The truncation property of BMO gives
$\|b_N\|_{\operatorname{BMO}_k}\leq2\|b\|_{\operatorname{BMO}_k}$.
Moreover, $|b_N|\leq|b|$, while
$\mathcal J_k(u,v)\in L_c^\infty(d\mu_k)$.  Applying the preceding estimate
to $b_N$ and then using dominated convergence proves the same estimate for
$b$.

The Dunkl operators commute and are skew-symmetric with respect to
$d\mu_k$.  Hence
\begin{align*}
 \int_{\mathbb R^2}\mathcal J_k(u,v)\,d\mu_k
 &=-\int_{\mathbb R^2}u
       \big(T_{e_1}T_{e_2}-T_{e_2}T_{e_1}\big)v\,d\mu_k=0.
\end{align*}
Also, a Dunkl operator maps a compactly supported smooth function to a
smooth function supported in the finite orbit of its support.  Thus
$\mathcal J_k(u,v)$ is bounded, compactly supported, and has integral zero.
It is therefore a constant multiple of a Coifman--Weiss atom, and in
particular it belongs to $H_k^1$.  The $H_k^1$--$\operatorname{BMO}_k$
duality and the preceding estimate prove the required norm bound.

The extension follows by applying the bilinear estimate to
\begin{align*}
 &\mathcal J_k(u^{(n)},v^{(n)})-
   \mathcal J_k(u^{(m)},v^{(m)})\\
 &\quad=\mathcal J_k(u^{(n)}-u^{(m)},v^{(n)})
  +\mathcal J_k(u^{(m)},v^{(n)}-v^{(m)}).
\end{align*}
\end{proof}

\subsection{The intrinsic Newton potential}

\begin{proposition}
\label{prop:intrinsic-Newton-Wente}
Let $K\Subset\mathbb R^2_{\mathrm{reg}}$ be compact.  For
$u,v\in C_c^\infty(\mathbb R^2)$, put
\[
  \Phi_k(u,v)(x)
  =\int_{\mathbb R^2}N_k(x,y)\mathcal J_k(u,v)(y)\,d\mu_k(y).
\]
Then
\[
  \sup_{x\in K}|\Phi_k(u,v)(x)|
  \leq C_{K,k}
  \|\nabla_k u\|_{L^2(d\mu_k)}
  \|\nabla_k v\|_{L^2(d\mu_k)}.
\]
For limits in the Dunkl gradient norm, the pointwise representative on
$K$ is defined by
\[
  \Phi_k(u,v)(x)
  =\big\langle N_k(x,\cdot),\mathcal J_k(u,v)
    \big\rangle_{\operatorname{BMO}_k,H_k^1}.
\]
For smooth data this pairing is the ordinary integral above.  Moreover, for
such data,
\[
  -\Delta_k\Phi_k(u,v)=\mathcal J_k(u,v)
\]
in the $d\mu_k$-distributional sense.  Thus, for every
$\varphi\in C_c^\infty(\mathbb R^2)$,
\[
 \int_{\mathbb R^2}\Phi_k(u,v)(x)(-\Delta_k\varphi)(x)\,d\mu_k(x)
 =\int_{\mathbb R^2}\mathcal J_k(u,v)(y)\varphi(y)\,d\mu_k(y).
\]
\end{proposition}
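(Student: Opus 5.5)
The plan is to combine the compact-uniform regular-pole BMO bound \eqref{eq:uniform-regular-poles-bmo} with Theorem~\ref{thm:dunkl-CLMS} through $H_k^1$--$\operatorname{BMO}_k$ duality, and then to derive the distributional equation from the fundamental-solution identity \eqref{eq:fundamental-solution} by Fubini.

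\emph{Pointwise bound.} Fix $x\in K$. By symmetry of $N_k$ and \eqref{eq:uniform-regular-poles-bmo},
\[
 M_K:=\sup_{x\in K}\|N_k(x,\cdot)\|_{\operatorname{BMO}_k}<\infty .
\]
For $u,v\in C_c^\infty(\mathbb R^2)$, the proof of Theorem~\ref{thm:dunkl-CLMS} shows that $\mathcal J_k(u,v)$ is bounded, compactly supported and has $\int\mathcal J_k(u,v)\,d\mu_k=0$. It is therefore a constant multiple of a Coifman--Weiss atom. I would then check that, for such a function, the duality pairing with $N_k(x,\cdot)$ is the absolutely convergent ordinary integral. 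Absolute convergence holds because $N_k(x,\cdot)\in L^1_{\mathrm{loc}}(d\mu_k)$: the majorant \eqref{eq:newton-log-majorant} is logarithmic near the orbit of $x$ and bounded elsewhere on compact sets. The pairing identity then follows from the definition of the pairing on finite atomic sums. The cancellation also allows one to subtract the average $(N_k(x,\cdot))_B$ over a containing ball, which is harmless. Duality now gives
\[
 |\Phi_k(u,v)(x)|\le C\,\|N_k(x,\cdot)\|_{\operatorname{BMO}_k}\,\|\mathcal J_k(u,v)\|_{H_k^1}
 \le C\,M_K\,C_k\,\|\nabla_k u\|_{L^2(d\mu_k)}\|\nabla_k v\|_{L^2(d\mu_k)} .
\]
For limits in the Dunkl gradient norm, Theorem~\ref{thm:dunkl-CLMS} shows that $\mathcal J_k(u^{(n)},v^{(n)})$ is Cauchy in $H_k^1$. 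The pairing with the fixed element $N_k(x,\cdot)$ therefore converges, uniformly in $x\in K$ by the same bound. This defines the representative, and the limit is independent of the approximating sequences by the bilinear estimate.

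\emph{Distributional equation.} Let $\varphi\in C_c^\infty(\mathbb R^2)$. Write $\int\Phi_k(u,v)(-\Delta_k\varphi)\,d\mu_k$ as a double integral and apply Fubini. Then use the symmetry of $N_k$ and \eqref{eq:fundamental-solution} with the roles of the variables interchanged, namely $\int N_k(x,y)(-\Delta_k\varphi)(x)\,d\mu_k(x)=\varphi(y)$. This yields $\int\mathcal J_k(u,v)\varphi\,d\mu_k$.

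The main obstacle is justifying Fubini, i.e.\ proving
\[
 \iint N_k(x,y)\,|\mathcal J_k(u,v)(y)|\,|\Delta_k\varphi(x)|\,d\mu_k(y)\,d\mu_k(x)<\infty .
\]
Here $y$ may lie on $\mathcal H$, where the pole has power singularities. The function $\Delta_k\varphi$ is bounded, since the difference quotients of a smooth function are smooth, but it is not compactly supported. It is supported in the orbit of $\operatorname{supp}\varphi$, which is still compact. So it suffices to show that $\int_{E}\int_{F}N_k(x,y)\,d\mu_k(x)\,d\mu_k(y)<\infty$ for compact sets $E,F$.

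I would establish this directly from the heat kernel using Tonelli:
\[
 \int_F N_k(x,y)\,d\mu_k(x)=\int_0^\infty\!\!\int_F\Gamma_k(t,x,y)\,d\mu_k(x)\,dt .
\]
For $t\le1$, the inner integral is at most $1$ by sub-Markovianity of the heat semigroup. For $t>1$, it is at most $\mu_k(F)\,C\,t^{-Q/2}$ by \eqref{eq:basic-heat-size} and $\mu_k(B(x,\sqrt t))\gtrsim t^{Q/2}$. Since $Q>2$, the $t$-integral converges, which gives a bound uniform in $y$ and completes the justification.
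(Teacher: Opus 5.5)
Your proposal is correct and follows essentially the same route as the paper. For the pointwise bound you use the compact-uniform regular-pole BMO bound together with Theorem~\ref{thm:dunkl-CLMS} and $H_k^1$--$\operatorname{BMO}_k$ duality, and for the distributional identity you justify Fubini exactly as the paper does, by splitting the heat integral at $t=1$ (Markov property for small $t$, the volume lower bound and $Q>2$ for large $t$). One small wording slip: your phrase ``not compactly supported'' for $\Delta_k\varphi$ should read ``not supported in $\supp\varphi$'', since, as you note, $\Delta_k\varphi$ is supported in the compact set $\mathcal O(\supp\varphi)$.
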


\begin{proof}
Proposition~\ref{prop:regular-poles-bmo} and the symmetry of the Newton
kernel give
\[
  \sup_{x\in K}
  \|N_k(x,\cdot)\|_{\operatorname{BMO}_k}<\infty.
\]
The estimate follows at once from Theorem~\ref{thm:dunkl-CLMS} and
$H_k^1$--$\operatorname{BMO}_k$ duality.  For smooth compactly supported
data, the kernel is locally integrable against $d\mu_k$, so the pairing is
the displayed integral.

For the distributional identity, put
$E=\supp(\Delta_k\varphi)$, which is compact.  By symmetry, the Markov property,
the global volume lower bound, and $Q>2$,
\begin{align*}
 A_E:=\sup_{y\in\mathbb R^2}\int_E N_k(x,y)\,d\mu_k(x)
 &\leq \int_0^1\int_{\mathbb R^2}
          \Gamma_k(t,y,x)\,d\mu_k(x)\,dt\\
 &\quad+C\mu_k(E)\int_1^\infty t^{-Q/2}\,dt<\infty.
\end{align*}
Since $\mathcal J_k(u,v)$ is bounded and compactly supported, this yields
\begin{align*}
 &\int_{\mathbb R^2}|\mathcal J_k(u,v)(y)|
   \left(\int_E N_k(x,y)|\Delta_k\varphi(x)|\,d\mu_k(x)\right)
   d\mu_k(y)\\
 &\qquad\leq
 \|\Delta_k\varphi\|_\infty
 \|\mathcal J_k(u,v)\|_{L^1(d\mu_k)}A_E<\infty.
\end{align*}
Thus Fubini's theorem applies, and the fundamental-solution identity gives
\begin{align*}
 &\int_{\mathbb R^2}\Phi_k(u,v)(x)(-\Delta_k\varphi)(x)\,d\mu_k(x)\\
 &\quad=\int_{\mathbb R^2}\mathcal J_k(u,v)(y)
       \left(\int_{\mathbb R^2}N_k(x,y)
             (-\Delta_k\varphi)(x)\,d\mu_k(x)\right)d\mu_k(y)\\
 &\quad=\int_{\mathbb R^2}\mathcal J_k(u,v)(y)
             \varphi(y)\,d\mu_k(y).
\end{align*}
The pointwise definition and the estimate for gradient-norm limits follow
from convergence in $H_k^1$.
\end{proof}

\subsection{A chamber gauge identity}

\begin{lemma}\label{lem:global-chamber-gauge}
Let $D$ be an open Weyl chamber and put $\rho_k=w_k^{1/2}$.  Let
$F,G\in C_c^\infty(D)$, and define
\[
 u(x)=
 \begin{cases}
   \rho_k(x)^{-1}F(x),&x\in D,\\
   0,&x\notin D,
 \end{cases}
 \qquad
 v(x)=
 \begin{cases}
   \rho_k(x)^{-1}G(x),&x\in D,\\
   0,&x\notin D.
 \end{cases}
\]
Then $u,v\in C_c^\infty(\mathbb R^2)$ and, on $D$,
\[
 T_{e_i}u=\rho_k^{-1}F_{x_i},
 \qquad
 T_{e_i}v=\rho_k^{-1}G_{x_i},
 \qquad i=1,2.
\]
The Dunkl Jacobian vanishes almost everywhere outside $D$, and globally
\[
 \boxed{
   \mathcal J_k(u,v)\,d\mu_k
   =(F_{x_1}G_{x_2}-F_{x_2}G_{x_1})\,dx.}
\]
In particular, the two sides have the same compact support in $D$.  One
also has the exact energy identity
\[
 \|\nabla_k u\|_{L^2(d\mu_k)}^2
 =\|\nabla F\|_{L^2(dx)}^2
  +\sum_{\alpha\in R_+}k(\alpha)^2|\alpha|^2
    \int_D\frac{|F(y)|^2}{\langle\alpha,y\rangle^2}\,dy,
\]
and the analogous identity with $v$ and $G$.
\end{lemma}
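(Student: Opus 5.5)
The plan is a direct computation on each Weyl chamber. It rests on one fact: $G$ acts simply transitively on the open Weyl chambers. Hence for a chamber $D'$ there is at most one root $\alpha\in R_+$ with $\sigma_\alpha D'=D$, and $\sigma_\alpha D\neq D$ for every $\alpha$.

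\emph{Smoothness.} Since $\rho_k$ is smooth and strictly positive on the open chamber $D$, and $\supp F\Subset D$, the function $\rho_k^{-1}F$ is smooth with compact support in $D$. Its zero extension $u$ therefore lies in $C_c^\infty(\mathbb R^2)$, and likewise $v$.

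\emph{Dunkl derivatives on $D$.} For $x\in D$ one has $\sigma_\alpha x\notin D$, so $u(\sigma_\alpha x)=0$. Hence
\[
T_{e_i}u(x)=\partial_iu(x)+\sum_{\alpha\in R_+}k(\alpha)\alpha_i\frac{u(x)}{\langle\alpha,x\rangle}.
\]
On $D$ we have $\partial_i\log\rho_k=\sum_{\alpha\in R_+}k(\alpha)\alpha_i/\langle\alpha,x\rangle$, since $\langle\alpha,x\rangle$ has constant sign there. Therefore
\[
\partial_i(\rho_k^{-1}F)=\rho_k^{-1}F_{x_i}-\rho_k^{-1}F\,\partial_i\log\rho_k,
\]
and the last term cancels the reflection sum exactly. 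This gives $T_{e_i}u=\rho_k^{-1}F_{x_i}$ on $D$, and the same for $v$. Consequently, on $D$,
\[
\mathcal J_k(u,v)\,w_k=\rho_k^{-2}(F_{x_1}G_{x_2}-F_{x_2}G_{x_1})\rho_k^2=F_{x_1}G_{x_2}-F_{x_2}G_{x_1}.
\]

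\emph{Outside $D$.} The set $\mathcal H$ is null, so take $x$ in another open chamber $D'$. There $u$ vanishes near $x$, so $\partial_iu(x)=0$, and only the reflection terms with $\sigma_\alpha x\in D$ survive. By simple transitivity at most one root $\alpha=\alpha_{D'}$ qualifies. Thus
\[
T_{e_i}u(x)=-k(\alpha)\alpha_i\frac{u(\sigma_\alpha x)}{\langle\alpha,x\rangle},
\]
and similarly for $v$. Both Dunkl gradients are then scalar multiples of the same vector $\alpha$, so the $2\times2$ determinant $\mathcal J_k(u,v)(x)$ vanishes. If no such root exists, both gradients are zero. This proves the boxed measure identity and the support statement.

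\emph{Energy identity.} On $D$ the gradient contributes $\int_D\rho_k^{-2}|\nabla F|^2w_k\,dx=\|\nabla F\|_{L^2}^2$. The nonzero contributions outside $D$ come exactly from the chambers $\sigma_\alpha D$, $\alpha\in R_+$; these chambers are pairwise distinct by simple transitivity. On $\sigma_\alpha D$ we have $|\nabla_ku|^2=k(\alpha)^2|\alpha|^2u(\sigma_\alpha x)^2/\langle\alpha,x\rangle^2$. Substitute $y=\sigma_\alpha x$, using that $\langle\alpha,\sigma_\alpha y\rangle=-\langle\alpha,y\rangle$, that $w_k$ is $G$-invariant, and that $u(y)^2w_k(y)=F(y)^2$. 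This turns each piece into $k(\alpha)^2|\alpha|^2\int_D F^2/\langle\alpha,y\rangle^2\,dy$. Summing over $\alpha\in R_+$ gives the stated identity, and the same argument applies to $v$ and $G$.

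The main obstacle is the bookkeeping outside $D$: one must see that each neighbouring chamber receives at most one reflection term, which is what forces the parallel-gradient degeneracy of the Jacobian and the clean sum in the energy identity.
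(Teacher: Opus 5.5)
Your proposal is correct and follows the paper's proof essentially step for step. The shared steps are: smoothness of the zero extension; vanishing of the reflection terms on $D$, so that the logarithmic derivative of $\rho_k$ cancels the remaining sum; simple transitivity, which leaves at most one reflection term (hence parallel Dunkl gradients) on every other chamber; and the change of variables $y=\sigma_\alpha x$ over the pairwise distinct chambers $\sigma_\alpha D$ for the energy identity. Your write-up is slightly more explicit than the paper's, for instance in the formula for $\partial_i\log\rho_k$ and the sign $\langle\alpha,\sigma_\alpha y\rangle=-\langle\alpha,y\rangle$.
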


\begin{proof}
Since $\supp F\cup\supp G\Subset D$ and $\rho_k$
is positive and smooth on $D$, the functions $F/\rho_k$ and $G/\rho_k$
vanish in a neighborhood of $\partial D$.  Their zero extensions therefore
belong to $C_c^\infty(\mathbb R^2)$.

If $x\in D$, then $\sigma_\alpha x\notin D$ for every
$\alpha\in R_+$.  Therefore
\begin{align*}
 T_{e_i}u(x)
 &=u_{x_i}(x)
   +u(x)\sum_{\alpha\in R_+}
       k(\alpha)\frac{\alpha_i}{\langle\alpha,x\rangle}\\
 &=\rho_k(x)^{-1}(\rho_k u)_{x_i}(x)
  =\rho_k(x)^{-1}F_{x_i}(x).
\end{align*}

Now let $x$ lie outside $D$ and outside the reflecting lines.  At most one
reflection $\sigma_\alpha$ can send the chamber containing $x$ to $D$.
Indeed, the reflection group acts simply transitively on the Weyl
chambers.  If there is no such reflection, then $\nabla_k u(x)=0$.  If
$\sigma_\alpha x\in D$, then
\[
 \nabla_k u(x)
 =-k(\alpha)\frac{u(\sigma_\alpha x)}
               {\langle\alpha,x\rangle}\,\alpha.
\]
The same formula holds for $v$.  Hence $\nabla_k u(x)$ and
$\nabla_k v(x)$ are parallel, and their determinant is zero.  Reflecting
lines have $\mu_k$-measure zero.  This proves the support claim.  On $D$,
\[
 \mathcal J_k(u,v)\,d\mu_k
 =\rho_k^{-2}(F_{x_1}G_{x_2}-F_{x_2}G_{x_1})\rho_k^2\,dx
 =(F_{x_1}G_{x_2}-F_{x_2}G_{x_1})\,dx.
\]

The contribution of $D$ to the energy of $u$ is
\[
 \int_D|\nabla_k u|^2\,d\mu_k
 =\int_D|\nabla F|^2\,dx.
\]
For $x=\sigma_\alpha y\in\sigma_\alpha D$, the preceding formula gives
\[
 |\nabla_k u(x)|^2
 =k(\alpha)^2|\alpha|^2
   \frac{|u(y)|^2}{\langle\alpha,y\rangle^2}.
\]
The chambers $\sigma_\alpha D$, $\alpha\in R_+$, are distinct, and
$d\mu_k$ is reflection invariant.  Changing variables and using
$|u|^2d\mu_k=|F|^2dx$ on $D$ proves the energy identity.
\end{proof}

\subsection{Euclidean-source corollary}

The next two results concern the Euclidean Jacobian with respect to Lebesgue
measure.  They are useful on a fixed chamber and give a direct route to the
sharp same-domain criterion below.
The classical estimate goes back to Wente \cite{Wente}.  The use of a BMO
kernel together with a Jacobian estimate is the route used by
Chanillo--Li \cite[Section~2]{ChanilloLi1992}.

We use the following theorem of Coifman, Lions, Meyer, and Semmes
\cite{CLMS}.

\begin{theorem}\label{thm:classical-CLMS}
If $F,G\in W^{1,2}(\mathbb R^2)$, then
\[
 F_{x_1}G_{x_2}-F_{x_2}G_{x_1}\in H^1(\mathbb R^2)
\]
and
\[
 \|F_{x_1}G_{x_2}-F_{x_2}G_{x_1}\|_{H^1(\mathbb R^2)}
 \leq C\|\nabla F\|_{L^2(\mathbb R^2)}
       \|\nabla G\|_{L^2(\mathbb R^2)},
\]
where $H^1(\mathbb R^2)$ is the classical real Hardy space and $C$ is
universal.
\end{theorem}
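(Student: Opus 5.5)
We plan to prove the theorem by the original maximal-function argument of Coifman, Lions, Meyer and Semmes. It uses only the div--curl structure of the Jacobian, the Sobolev--Poincar\'e inequality, and the $L^r$ boundedness of the Hardy--Littlewood maximal operator $M$ for $r>1$.

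\emph{Reduction to smooth data.} Both sides are continuous bilinear expressions in $(\nabla F,\nabla G)\in L^2\times L^2$. Indeed, $F_{x_1}G_{x_2}-F_{x_2}G_{x_1}$ depends continuously on $(\nabla F,\nabla G)$ in $L^1$. Moreover, if $f_j\to f$ in $L^1$ and $\|f_j\|_{H^1}$ stays bounded, then $f\in H^1$ with the same bound. This follows from lower semicontinuity of the grand maximal function: for each fixed $t$ and $x$, $f_j*\phi_t(x)\to f*\phi_t(x)$, and Fatou's lemma applies. So it suffices to treat $F,G\in C_c^\infty(\R^2)$.

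\emph{Div--curl structure and the maximal function.} For smooth $F,G$ we have
\[
 F_{x_1}G_{x_2}-F_{x_2}G_{x_1}=\partial_1(FG_{x_2})-\partial_2(FG_{x_1}),
\]
because the mixed terms $FG_{x_1x_2}$ cancel. Fix a radial bump $\phi\in C_c^\infty(B(0,1))$ with $\int\phi=1$, and let $\phi_t(z)=t^{-2}\phi(z/t)$. We estimate the radial maximal function $\sup_t|J*\phi_t(x)|$, where $J=F_{x_1}G_{x_2}-F_{x_2}G_{x_1}$; by Fefferman--Stein this maximal function characterizes $H^1(\R^2)$.

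Integrating by parts, we may subtract the constant $F_B$, where $B=B(x,t)$, since $\int(G_{x_2}\partial_1\phi_t-G_{x_1}\partial_2\phi_t)=0$. This gives
\[
 |J*\phi_t(x)|\le\frac{C}{t}\cdot\frac1{|B|}\int_B|F-F_B|\,|\nabla G|\,dy,
\]
using $|\nabla\phi_t|\lesssim t^{-3}$ supported in $B$. Apply H\"older with exponents $4$ and $4/3$, then the Sobolev--Poincar\'e inequality
\[
 \Bigl(\frac1{|B|}\int_B|F-F_B|^4\Bigr)^{1/4}\le Ct\Bigl(\frac1{|B|}\int_B|\nabla F|^{4/3}\Bigr)^{3/4},
\]
which holds since $(4/3)^*=4$ in dimension two. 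The factor $t$ cancels the prefactor $1/t$, and we obtain the pointwise bound
\[
 \sup_{t>0}|J*\phi_t(x)|\le C\,\bigl(M|\nabla F|^{4/3}(x)\bigr)^{3/4}\bigl(M|\nabla G|^{4/3}(x)\bigr)^{3/4}.
\]

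\emph{Integration.} By Cauchy--Schwarz,
\[
 \int\sup_t|J*\phi_t|\,dx\le C\,\bigl\|(M|\nabla F|^{4/3})^{3/4}\bigr\|_{L^2}\,\bigl\|(M|\nabla G|^{4/3})^{3/4}\bigr\|_{L^2}.
\]
Each factor is $\|M|\nabla F|^{4/3}\|_{L^{3/2}}^{3/4}$, and the same for $G$. Since $|\nabla F|^{4/3}\in L^{3/2}$ and $M$ is bounded on $L^{3/2}$, each factor is at most $C\|\nabla F\|_{L^2}$, respectively $C\|\nabla G\|_{L^2}$. This proves the estimate, and the density step above extends it to $W^{1,2}$.

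The main obstacle is the cancellation step. Placing $F-F_B$ on one factor makes the scale $t$ disappear exactly. Choosing an exponent strictly below $2$ is also essential: a bound by $M|\nabla F|\cdot M|\nabla G|$ with exponent $2$ would require the maximal operator on $L^1$, which fails.
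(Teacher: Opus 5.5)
Your proposal is correct. It is the original Coifman--Lions--Meyer--Semmes maximal-function argument: the div--curl cancellation lets you subtract $F_B$, then H\"older with exponents $4$ and $4/3$, the Sobolev--Poincar\'e inequality with $(4/3)^*=4$, and the $L^{3/2}$ bound for $M$ finish the estimate. The paper gives no proof of its own and simply cites that theorem; your Fatou/lower-semicontinuity step with the Fefferman--Stein radial maximal characterization correctly extends the smooth case to all of $W^{1,2}(\mathbb R^2)$.
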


\begin{lemma}\label{lem:local-classical-CLMS}
Let $U\Subset U_1\subset\mathbb R^2$ be bounded open sets, with $U_1$
connected and Lipschitz.  Here $\operatorname{BMO}(U_1;dx)$ denotes the
intrinsic BMO space whose seminorm is taken over Euclidean balls contained
in $U_1$.  There is a constant $C_{U,U_1}$ such that, for
$F,G\in W_0^{1,2}(U)$ and $b\in\operatorname{BMO}(U_1;dx)$,
\begin{equation}\label{eq:local-classical-CLMS}
 |\langle b,F_{x_1}G_{x_2}-F_{x_2}G_{x_1}\rangle_U|
 \leq C_{U,U_1}\|b\|_{\operatorname{BMO}(U_1;dx)}
       \|\nabla F\|_{L^2}\|\nabla G\|_{L^2}.
\end{equation}
\end{lemma}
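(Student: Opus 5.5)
The plan is to reduce \eqref{eq:local-classical-CLMS} to the global Theorem~\ref{thm:classical-CLMS} and classical $H^1(\mathbb R^2)$--$\BMO(\mathbb R^2)$ duality. To do this, we extend $b$ from $U_1$ to a function on all of $\mathbb R^2$, with control of its BMO norm, while keeping its values on $U$. Since $F,G\in W_0^{1,2}(U)$, their zero extensions lie in $W^{1,2}(\mathbb R^2)$ with the same gradient norms. The Jacobian $J=F_{x_1}G_{x_2}-F_{x_2}G_{x_1}$ is supported in $\overline U$ and lies in $H^1(\mathbb R^2)$ with norm at most $C\|\nabla F\|_{L^2}\|\nabla G\|_{L^2}$. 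Thus the pairing $\langle b,J\rangle_U$ depends only on $b|_U$.

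First I would reduce to smooth compactly supported $F,G$ in $U$ by density. For such data $J$ is bounded and compactly supported, so the pairing is an honest integral. The bound then extends by continuity, since the $H^1$ norm of $J$ is bilinearly controlled. Next I would extend $b$. Because $U_1$ is bounded, connected and Lipschitz, it is a uniform (Jones) domain. By Jones' extension theorem for BMO, every $b\in\BMO(U_1;dx)$ has an extension $Eb\in\BMO(\mathbb R^2)$ with $\|Eb\|_{\BMO(\mathbb R^2)}\le C_{U_1}\|b\|_{\BMO(U_1;dx)}$.

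If one prefers to avoid citing Jones, a cruder route suffices because $U\Subset U_1$. Let $\delta=\dist(U,\partial U_1)$. Fix a ball $B_0\subset U_1$ and subtract the constant $b_{B_0}$; this is harmless because $\int J\,dx=0$. Then the normalized $b-b_{B_0}$ lies in $L^1(U')$ with norm $\lesssim_{U,U_1}\|b\|_{\BMO(U_1)}$, where $U'=\{x:\dist(x,U)<\delta/2\}$. This follows from chaining balls of radius comparable to $\delta$ inside the connected domain $U_1$. Multiplying by a cutoff $\chi\in C_c^\infty(U')$ with $\chi=1$ on $U$ then gives a function $\chi(b-b_{B_0})\in\BMO(\mathbb R^2)$. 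Its norm is controlled by the local oscillation on balls of radius $<\delta/4$, which lie in $U_1$, together with the $L^1(U')$ bound for larger balls.

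With the extension in hand, $\langle b,J\rangle_U=\int_{\mathbb R^2}\tilde b\,J\,dx$, where $\tilde b$ is either extension. Fefferman duality together with Theorem~\ref{thm:classical-CLMS} then gives the claim. I expect the main obstacle to be the extension step, specifically the chaining estimate that controls the mean of $b$ on small balls near $\partial U$ by $\|b\|_{\BMO(U_1)}$ uniformly. Connectedness of $U_1$ enters only here, through a finite chain of overlapping balls. The chain length depends only on $U$ and $U_1$, because $\overline{U'}$ is a compact subset of $U_1$ and can be covered by finitely many balls of radius $\delta/8$ contained in $U_1$.
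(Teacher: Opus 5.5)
Your main route is the paper's: zero-extend $F,G$, extend $b$ to $\mathbb R^2$ by Jones' BMO extension theorem, and conclude from Theorem~\ref{thm:classical-CLMS} and Fefferman--Stein duality, with density of smooth data in $W_0^{1,2}(U)$ handling well-definedness of the pairing (the paper phrases this as independence of the chosen extension). Your optional cutoff alternative to Jones also works, but your sketch omits the product-rule term $\|\nabla\chi\|_\infty\, r\,|(b-b_{B_0})_B|$ on small balls; it is bounded via the logarithmic growth $|(b-b_{B_0})_B|\lesssim(1+\log(\delta/r))\|b\|_{\BMO(U_1;dx)}$, since means over arbitrarily small balls are not uniformly bounded.
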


\begin{proof}
By the BMO extension theorem for bounded Lipschitz domains \cite{Jones},
$b$ has an extension $\widetilde b\in\operatorname{BMO}(\mathbb R^2)$ such
that
\[
 \|\widetilde b\|_{\operatorname{BMO}(\mathbb R^2;dx)}
 \leq C_{U_1}\|b\|_{\operatorname{BMO}(U_1;dx)}.
\]
Extend $F$ and $G$ by zero to $\mathbb R^2$ and define the left side of
\eqref{eq:local-classical-CLMS} as the global $H^1$--BMO pairing with
$\widetilde b$.  For Sobolev data this is a distributional pairing; no
absolute integrability of the product is asserted.
To see that this is independent of the extension, approximate $F,G$ in
$W_0^{1,2}(U)$ by smooth compactly supported functions.  The pairings of
the smooth Jacobians with the difference of two extensions vanish, while
Theorem~\ref{thm:classical-CLMS} passes this identity to the limit.
Indeed, the difference of the extensions vanishes on $U_1$, while the
smooth approximants are compactly supported in $U\Subset U_1$.  The pairing
agrees with the ordinary integral for smooth data.
Theorem~\ref{thm:classical-CLMS} and the Fefferman--Stein $H^1$--BMO
duality \cite{FeffermanStein} give
\eqref{eq:local-classical-CLMS}.
\end{proof}

\begin{proposition}\label{prop:Euclidean-source-Newton}
Let $D$ be an open Weyl chamber, let
$U\Subset D$ be bounded and open, and let
$K\Subset\mathbb R^2_{\mathrm{reg}}$ be compact.  For
$F,G\in C_c^\infty(U)$, define
\[
 \Psi(x)=\int_U N_k(x,y)
 \bigl(F_{y_1}(y)G_{y_2}(y)-F_{y_2}(y)G_{y_1}(y)\bigr)\,dy,
 \qquad x\in\mathbb R^2.
\]
For general $F,G\in W_0^{1,2}(U)$, define $\Psi$ as the
$L^1_{\mathrm{loc}}(\mathbb R^2,d\mu_k)$ limit under smooth $W^{1,2}$
approximation.  This limit is independent of the approximation.  On $K$,
take the pointwise representative given by the local $H^1$--BMO pairing.
Then
\begin{equation}\label{eq:Euclidean-source-bound}
 \sup_{x\in K}|\Psi(x)|
 \leq C_{K,U,k}
       \|\nabla F\|_{L^2(\mathbb R^2)}
       \|\nabla G\|_{L^2(\mathbb R^2)}.
\end{equation}
Moreover, in the global distributional sense,
\[
 -\Delta_k\Psi\,d\mu_k
 =(F_{x_1}G_{x_2}-F_{x_2}G_{x_1})\,dx.
\]
That is, for every $\varphi\in C_c^\infty(\mathbb R^2)$,
\begin{equation}\label{eq:Euclidean-source-equation}
 \int_{\mathbb R^2}\Psi(x)(-\Delta_k\varphi)(x)\,d\mu_k(x)
 =\int_U
 \bigl(F_{y_1}(y)G_{y_2}(y)-F_{y_2}(y)G_{y_1}(y)\bigr)
 \varphi(y)\,dy.
\end{equation}
\end{proposition}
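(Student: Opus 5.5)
The plan is to reduce Proposition~\ref{prop:Euclidean-source-Newton} to the intrinsic result, Proposition~\ref{prop:intrinsic-Newton-Wente}, through the chamber gauge identity of Lemma~\ref{lem:global-chamber-gauge}. The one difficulty is that the gauged energy carries a Hardy-type term, which we control by the Euclidean gradient.

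\textbf{Smooth data.} For $F,G\in C_c^\infty(U)$ with $U\Subset D$, set $u=\rho_k^{-1}F$ and $v=\rho_k^{-1}G$, extended by zero. Lemma~\ref{lem:global-chamber-gauge} gives $u,v\in C_c^\infty(\R^2)$ and $\mathcal J_k(u,v)\,d\mu_k=(F_{x_1}G_{x_2}-F_{x_2}G_{x_1})\,dx$. Hence $\Psi=\Phi_k(u,v)$ identically. The distributional identity \eqref{eq:Euclidean-source-equation} then follows directly from the equation in Proposition~\ref{prop:intrinsic-Newton-Wente}.

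\textbf{The estimate \eqref{eq:Euclidean-source-bound}.} Proposition~\ref{prop:intrinsic-Newton-Wente} gives
\[
\sup_K|\Psi|\le C_{K,k}\|\nabla_k u\|_{L^2(d\mu_k)}\|\nabla_k v\|_{L^2(d\mu_k)}.
\]
By the energy identity, $\|\nabla_k u\|^2$ equals $\|\nabla F\|_2^2$ plus terms $\int_D|F|^2\langle\alpha,y\rangle^{-2}\,dy$. Since $\supp F\subset U$ and $\dist(U,\mathcal H)=:\delta_U>0$, each of these terms is at most $\delta_U^{-2}|\alpha|^{-2}\|F\|_{L^2(U)}^2$. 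Because $U$ is bounded, the Poincar\'e inequality on $W_0^{1,2}(U)$ bounds $\|F\|_{L^2}$ by $C_U\|\nabla F\|_{L^2}$. Therefore $\|\nabla_k u\|\le C_{U,k}\|\nabla F\|_2$, and likewise for $v$. This proves \eqref{eq:Euclidean-source-bound} for smooth data.

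An alternative route avoids the gauge. One uses that $N_k(x,\cdot)$ lies in $\BMO(\mu_k)$ uniformly in $x\in K$, and that on $U_1$ with $U\Subset U_1\Subset D$ the weight is comparable to a constant, so the weighted and unweighted BMO seminorms agree up to constants. Lemma~\ref{lem:local-classical-CLMS} then applies directly. I would record this as a check, and it also supplies the \emph{local $H^1$--BMO pairing} representative named in the statement.

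\textbf{Extension to $W_0^{1,2}(U)$.} Take smooth approximants $F^{(n)}\to F$ and $G^{(n)}\to G$ in $W^{1,2}$. The map $F\mapsto u$ is linear, and by the bound above it is continuous from $W_0^{1,2}(U)$ into the Dunkl gradient completion. Bilinearity, as in the telescoping at the end of Theorem~\ref{thm:dunkl-CLMS}, then makes $\Psi^{(n)}$ Cauchy uniformly on $K$.

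For $L^1_{\mathrm{loc}}(d\mu_k)$ convergence on all of $\R^2$, pair with $N_k$ and use the $H_k^1$ convergence of the Jacobians. Concretely, for a compact set $E$ the function $y\mapsto\int_E N_k(x,y)\,\operatorname{sgn}(\cdot)\,d\mu_k(x)$ should lie in $\BMO(\mu_k)$ with norm controlled by $A_E$. This step is the main obstacle. I would first try bounding $\int_E|\Psi^{(n)}-\Psi^{(m)}|\,d\mu_k$ by duality against $L^\infty(E)$ test functions $\phi$, which reduces to showing $N_k(\phi\mathbf 1_E)\in\BMO(\mu_k)$. A simpler fallback is $L^1\to L^1_{\mathrm{loc}}$ continuity, using that the Jacobians are supported in the fixed set $U$ and converge in $L^1(dx)$ by H\"older's inequality, together with the finiteness of $A_E$ established in the proof of Proposition~\ref{prop:intrinsic-Newton-Wente}:
\[
\int_E|\Psi^{(n)}-\Psi^{(m)}|\,d\mu_k\le A_E\,\|w_k^{-1}\|_{L^\infty(U)}\,\|J^{(n)}-J^{(m)}\|_{L^1(U)}\to0.
\]
This gives convergence, independence of the approximating sequence, and passage of \eqref{eq:Euclidean-source-equation} to the limit.

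Finally, the pointwise representative on $K$ coincides with this $L^1_{\mathrm{loc}}$ limit, because uniform limits on $K$ agree with $L^1_{\mathrm{loc}}$ limits almost everywhere.
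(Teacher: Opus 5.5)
Your argument is correct, but your main route to the estimate differs from the paper's. The paper does not use the gauge here. It chooses a Lipschitz set $U_1$ with $U\Subset U_1\Subset D$. Because $w_k$ is comparable to a constant on $U_1$, it gets $\sup_{x\in K}\|N_k(x,\cdot)\|_{\BMO(U_1;dx)}<\infty$ from Proposition~\ref{prop:regular-poles-bmo}. It then applies Lemma~\ref{lem:local-classical-CLMS} (classical CLMS, Jones extension, Fefferman--Stein duality) with $b=N_k(x,\cdot)$, which is exactly what you list as your ``alternative route.'' The paper proves \eqref{eq:Euclidean-source-equation} by a separate Fubini argument applied to the smooth density $(F_{x_1}G_{x_2}-F_{x_2}G_{x_1})/w_k$. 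It handles Sobolev data as in your fallback, via $\sup_{y\in U}\int_E N_k(x,y)\,d\mu_k(x)<\infty$ and $L^1$ convergence of the Jacobians.

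Your primary route instead identifies $\Psi=\Phi_k(u,v)$ through Lemma~\ref{lem:global-chamber-gauge}. The distributional equation then follows directly from Proposition~\ref{prop:intrinsic-Newton-Wente}. The only new work is absorbing the Hardy-type term in the energy identity, using $|\langle\alpha,y\rangle|\geq|\alpha|\delta_U$ on $U$ and the Poincar\'e inequality on the bounded set $U$. This is valid, and it makes explicit that the Euclidean-source potential is the intrinsic potential of gauged data.

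The trade-off is in what each route depends on. Yours rests on the Dunkl--CLMS theorem, hence on the Dziuba\'nski--Hejna atomic identification and commutator bounds. Your constant also involves $\delta_U^{-1}$ and the Poincar\'e constant of $U$. The paper's route needs only classical Euclidean Hardy-space theory plus the weighted BMO bound for the kernel, and it needs no Poincar\'e inequality. It also produces directly the local $H^1$--BMO pairing representative named in the statement. Your $H_k^1$--$\BMO_k$ representative coincides with it, since both are uniform limits on $K$ of the smooth potentials.

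Two minor points. The factor $\|w_k^{-1}\|_{L^\infty(U)}$ in your last display is unnecessary when the Jacobians are measured in $L^1(U,dy)$. The BMO-duality approach you flag as the ``main obstacle'' is not needed, because your $L^1$ fallback already gives convergence, independence of the approximation, and passage to the limit in the equation.
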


\begin{proof}
First assume $F,G\in C_c^\infty(U)$.  Choose a bounded Lipschitz set
$U\Subset U_1\Subset D$.  On $U_1$, $d\mu_k$ and $dx$ are comparable.
By Proposition~\ref{prop:regular-poles-bmo} and the symmetry of $N_k$,
\begin{equation}\label{eq:local-kernel-BMO}
 \sup_{x\in K}
 \|N_k(x,\cdot)\|_{\operatorname{BMO}(U_1;dx)}<\infty.
\end{equation}
Indeed, weighted and Lebesgue BMO are equivalent on $U_1$.  Applying
Lemma~\ref{lem:local-classical-CLMS} with $b(y)=N_k(x,y)$ gives, uniformly
for $x\in K$,
\[
 |\Psi(x)|
 \leq C_{K,U,k}\|\nabla F\|_{L^2}\|\nabla G\|_{L^2},
\]
which is \eqref{eq:Euclidean-source-bound}.

For the distributional identity, rewrite
\[
 \Psi(x)=\int_U N_k(x,y)
 \frac{F_{y_1}(y)G_{y_2}(y)-F_{y_2}(y)G_{y_1}(y)}{w_k(y)}
 \,d\mu_k(y).
\]
Since $w_k^{-1}$ is smooth on $D$, the zero extension of
$(F_{x_1}G_{x_2}-F_{x_2}G_{x_1})/w_k$ belongs to
$C_c^\infty(\mathbb R^2)$.  If $E\subset\mathbb R^2$ is compact, the
regular-pole kernel
bounds give
\[
 \sup_{y\in\supp(F_{y_1}G_{y_2}-F_{y_2}G_{y_1})}
 \int_E N_k(x,y)\,d\mu_k(x)<\infty.
\]
This supplies the absolute integrability needed for Fubini's theorem.
Applying the fundamental-solution identity and Fubini's theorem gives
\eqref{eq:Euclidean-source-equation}.

For the passage to Sobolev data, choose
$F^{(n)},G^{(n)}\in C_c^\infty(U)$ converging to $F,G$ in $W^{1,2}$.
Bilinearity and Theorem~\ref{thm:classical-CLMS} show that the Jacobians
converge in
$H^1(\mathbb R^2)$; in particular, they converge in $L^1(U)$.  By
\eqref{eq:local-kernel-BMO} and
Lemma~\ref{lem:local-classical-CLMS}, the corresponding potentials converge
uniformly on $K$, independently of the approximation.

Moreover, the regular-pole kernel bounds give, for every compact
$E\subset\mathbb R^2$,
\[
 \sup_{y\in U}\int_E N_k(x,y)\,d\mu_k(x)<\infty.
\]
Consequently the potentials converge in $L^1(E,d\mu_k)$ as well.  The two
limits agree on compact regular sets.  We may therefore pass to the limit
in \eqref{eq:Euclidean-source-equation}, which proves the distributional
identity for Sobolev data.
An exhaustion of $\mathbb R^2_{\mathrm{reg}}$ by compact sets
shows that the pointwise representatives obtained from different choices
of $K$ agree.
\end{proof}

The radial test used for the atomic obstruction has a compactly supported
Jacobian realization.  This is the only additional ingredient needed for
the negative result.

\begin{lemma}\label{lem:Jacobian-realization}
For the function $q$ fixed in Lemma~\ref{lem:radial-test}, there are
$U,V\in C_c^\infty(B(0,1))$ such that
\[
 U_{z_1}V_{z_2}-U_{z_2}V_{z_1}=q
\]
and
\[
 \int_{\mathbb R^2}\big(|\nabla U|^2+|\nabla V|^2\big)\,dz<\infty.
\]
\end{lemma}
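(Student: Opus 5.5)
The plan is to realize $q$ explicitly as the Jacobian of a map that is radial in modulus and equal to the identity in angle. Recall from the proof of Lemma~\ref{lem:radial-test} that
\[
 q(z)=\frac{h(|z|)h'(|z|)}{|z|},\qquad q(0)=1,
\]
where $h\in C_c^\infty([0,\infty))$, $h(s)=s$ near $0$, and $h=0$ on $[3/4,\infty)$. I would set
\[
 U(z)=h(|z|)\frac{z_1}{|z|},\qquad V(z)=h(|z|)\frac{z_2}{|z|}\qquad(z\neq0),
\]
with $U(0)=V(0)=0$. In polar coordinates $z=(r\cos\theta,r\sin\theta)$ this means $U=h(r)\cos\theta$ and $V=h(r)\sin\theta$. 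In other words, $(U,V)$ is the map $z\mapsto h(|z|)\,z/|z|$.

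The first step is regularity and support. On the punctured plane, $z\mapsto h(|z|)/|z|$ is smooth because $|z|$ is smooth there. Near the origin, $h(r)=r$, so $U=z_1$ and $V=z_2$ on a neighbourhood of $0$, which are smooth. Since $h$ vanishes for $r\geq3/4$, both $U$ and $V$ are supported in $\overline{B(0,3/4)}\subset B(0,1)$. Hence $U,V\in C_c^\infty(B(0,1))$, and the finiteness of $\int(|\nabla U|^2+|\nabla V|^2)\,dz$ is then automatic.

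The second step is the Jacobian identity. In polar coordinates the area form is $dz=r\,dr\,d\theta$, so
\[
 U_{z_1}V_{z_2}-U_{z_2}V_{z_1}=\frac1r\bigl(U_rV_\theta-U_\theta V_r\bigr).
\]
Here $U_r=h'\cos\theta$, $U_\theta=-h\sin\theta$, $V_r=h'\sin\theta$, and $V_\theta=h\cos\theta$. Therefore
\[
 U_rV_\theta-U_\theta V_r=hh'(\cos^2\theta+\sin^2\theta)=hh',
\]
and the Jacobian equals $h(r)h'(r)/r=q(z)$ for $z\neq0$. At the origin both sides are continuous, and the Jacobian of the identity map is $1=q(0)$, so the identity holds everywhere.

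There is no real obstacle here. The only point needing care is smoothness at the origin, and it is handled by the choice $h(s)=s$ near zero that was already built into Lemma~\ref{lem:radial-test}. It is also worth checking that the flat transitions of $h$ preserve smoothness. They do, since $h$ itself is smooth on $[0,\infty)$ and the factor $z/|z|$ is smooth away from $0$.
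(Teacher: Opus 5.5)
Your proposal is correct and follows essentially the same route as the paper: the same map $z\mapsto h(|z|)\,z/|z|$, the same smoothness argument via $h(s)=s$ near $0$, and the same computation giving the Jacobian $hh'/s=q$. The only difference is minor: the paper writes the energy out as $2\pi\int_0^\infty\bigl(h'(s)^2+h(s)^2/s^2\bigr)s\,ds$, whereas you note that it is finite automatically because $U,V\in C_c^\infty$.
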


\begin{proof}
Let $h$ be the profile used in the proof of Lemma~\ref{lem:radial-test}.
For $z\neq0$, set
\[
 U(z)=h(|z|)\frac{z_1}{|z|},
 \qquad
 V(z)=h(|z|)\frac{z_2}{|z|},
\]
and set $U(0)=V(0)=0$.  Since $h(s)=s$ near zero and its transitions are
flat, $U,V\in C_c^\infty(B(0,1))$.  A direct calculation, with $s=|z|$,
gives
\[
 U_{z_1}(z)V_{z_2}(z)-U_{z_2}(z)V_{z_1}(z)
 =\frac{h(s)h'(s)}s=q(z).
\]
Moreover,
\[
 \int_{\mathbb R^2}\big(|\nabla U|^2+|\nabla V|^2\big)\,dz
 =2\pi\int_0^\infty
   \left(h'(s)^2+\frac{h(s)^2}{s^2}\right)s\,ds<\infty.
\]
\end{proof}

\subsection{The intrinsic obstruction at the reflecting lines}

\begin{proposition}\label{prop:intrinsic-Wente-obstruction}
Let $\Omega\subset\mathbb R^2$ be a bounded domain and assume
\[
 \dist(\overline\Omega,\mathcal H)=0.
\]
Then there are
\[
 p_*\in\overline\Omega\cap\mathcal H,
 \qquad
 p_j\in\Omega\cap\mathbb R^2_{\mathrm{reg}},
 \qquad p_j\longrightarrow p_*,
\]
radii $r_j>0$, and functions
$u^{(j)},v^{(j)}\in C_c^\infty(B(p_j,r_j))$ such that
\[
 \overline{B(p_j,r_j)}
 \subset\Omega\cap\mathbb R^2_{\mathrm{reg}},
 \qquad
 \sup_j\big(
   \|\nabla_k u^{(j)}\|_{L^2(d\mu_k)}
  +\|\nabla_k v^{(j)}\|_{L^2(d\mu_k)}
 \big)<\infty,
\]
while
\[
 \int_\Omega N_k(p_j,y)
 \mathcal J_k(u^{(j)},v^{(j)})(y)\,d\mu_k(y)
 \geq\frac{c}{w_k(p_j)}\longrightarrow\infty.
\]
In particular, there is no constant $C_{\Omega,k}<\infty$ such that
\begin{equation}\label{eq:intrinsic-same-domain-failure}
 \left\|
  \int_\Omega N_k(\cdot,y)\mathcal J_k(u,v)(y)\,d\mu_k(y)
 \right\|_{L^\infty(\Omega)}
 \leq C_{\Omega,k}
 \|\nabla_k u\|_{L^2(d\mu_k)}
 \|\nabla_k v\|_{L^2(d\mu_k)}
\end{equation}
for every $u,v\in C_c^\infty(\Omega\setminus\mathcal H)$.
\end{proposition}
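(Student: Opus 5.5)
The plan is to combine the radial test of Lemma~\ref{lem:radial-test} with the Jacobian realization of Lemma~\ref{lem:Jacobian-realization} and the chamber gauge identity of Lemma~\ref{lem:global-chamber-gauge}. This follows the proof of Theorem~\ref{thm:atomic-necessity} closely.

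\textbf{Choice of points and radii.} As in Theorem~\ref{thm:atomic-necessity}, choose $p_j\in\Omega\setminus\mathcal H$ with $\delta_j=\dist(p_j,\mathcal H)\to0$, and pass to a subsequence with $p_j\to p_*\in\overline\Omega\cap\mathcal H$, so that $w_k(p_j)\to0$. Take radii
\[
 0<r_j<\min\{c_0\delta_j,\ \dist(p_j,\partial\Omega)/2\},
\]
where $c_0$ comes from Lemma~\ref{lem:radial-test}, decreased if needed so that $c_0<1/2$. Then $\overline{B(p_j,r_j)}$ lies in $\Omega$ and in the open Weyl chamber $D_j$ containing $p_j$. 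On this ball, $w_k\simeq w_k(p_j)$ uniformly in $j$, and derivatives of $\rho_k=w_k^{1/2}$ obey $|\nabla\rho_k|\lesssim\rho_k/\delta_j$.

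\textbf{Construction of the test functions.} Let $U,V$ be as in Lemma~\ref{lem:Jacobian-realization}, and set
\[
 F_j(y)=U\bigl((y-p_j)/r_j\bigr),\qquad G_j(y)=V\bigl((y-p_j)/r_j\bigr).
\]
These belong to $C_c^\infty(B(p_j,r_j))$. Their Euclidean Jacobian is $r_j^{-2}q((y-p_j)/r_j)$, and by scale invariance of the Dirichlet energy in the plane, $\|\nabla F_j\|_{L^2}=\|\nabla U\|_{L^2}$, and likewise for $G_j$. Define $u^{(j)}=\rho_k^{-1}F_j$ and $v^{(j)}=\rho_k^{-1}G_j$, extended by zero. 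Lemma~\ref{lem:global-chamber-gauge} then gives $u^{(j)},v^{(j)}\in C_c^\infty(B(p_j,r_j))$ and
\[
 \mathcal J_k(u^{(j)},v^{(j)})\,d\mu_k=r_j^{-2}q\bigl((y-p_j)/r_j\bigr)\,dy .
\]
The energy identity expresses $\|\nabla_ku^{(j)}\|^2_{L^2(d\mu_k)}$ as $\|\nabla U\|_2^2$ plus the terms
\[
 \sum_{\alpha\in R_+}k(\alpha)^2|\alpha|^2\int\frac{|F_j|^2}{\langle\alpha,y\rangle^2}\,dy .
\]
On the support, $|\langle\alpha,y\rangle|\gtrsim\delta_j$ (for the lines at distance comparable to $\delta_j$; the rest are even farther). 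Hence each such term is at most $C\|U\|_\infty^2r_j^2/\delta_j^2\le C$, since $r_j<c_0\delta_j$. The same argument applies to $v^{(j)}$, which gives the uniform energy bound.

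\textbf{The lower bound and the conclusion.} The change of variables $y=p_j+r_jz$ gives
\[
 \int_\Omega N_k(p_j,y)\,\mathcal J_k(u^{(j)},v^{(j)})\,d\mu_k=\int_{B(0,1)}N_k(p_j,p_j+r_jz)\,q(z)\,dz .
\]
By Lemma~\ref{lem:radial-test} this is at least $c/w_k(p_j)$, which tends to infinity. To rule out the estimate \eqref{eq:intrinsic-same-domain-failure}, the value at $p_j$ must control the $L^\infty(\Omega)$ norm. The potential is continuous near $p_j$: its source is a smooth function supported in a ball that stays away from $\mathcal H$, and the kernel has only a logarithmic singularity there, as in Corollary~\ref{cor:continuous-atomic-potentials}. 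So the essential supremum is at least the value at $p_j$. Finally, $u^{(j)},v^{(j)}$ lie in $C_c^\infty(\Omega\setminus\mathcal H)$, which contradicts any uniform constant $C_{\Omega,k}$.

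The main obstacle is the uniform energy bound. It depends on the Hardy-type reflection terms in the gauge identity being controlled by $r_j/\delta_j$. This is exactly why the radius must be tied to the distance to $\mathcal H$ and not merely to $\partial\Omega$. One must also check that all root lines, not only the nearest one, give bounded contributions.
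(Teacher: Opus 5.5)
Your proposal is correct and follows the paper's proof essentially step for step. You use the same radii $r_j<\min\{c_0\delta_j,\dist(p_j,\partial\Omega)/2\}$, the same gauged rescalings of the functions from Lemma~\ref{lem:Jacobian-realization}, the radial test for the lower bound, and the energy identity with $|\langle\alpha,y\rangle|\geq\tfrac12|\alpha|\delta_j$ on the ball for the uniform bound. Bounding the reflection terms by $\|U\|_\infty^2r_j^2/\delta_j^2$ instead of $r_j^2\|U\|_{L^2}^2/\delta_j^2$ is only cosmetic. Since $\delta_j$ is the distance to all of $\mathcal H$, every root line satisfies that lower bound directly, so your caveat about the farther lines is automatic.
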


\begin{proof}
Choose $p_j\in\Omega\cap\mathbb R^2_{\mathrm{reg}}$ such that
\[
 \delta_j=\dist(p_j,\mathcal H)\longrightarrow0.
\]
After passing to a subsequence,
$p_j\to p_*\in\overline\Omega\cap\mathcal H$, and hence
$w_k(p_j)\to0$.  Put $d_j=\dist(p_j,\partial\Omega)$ and
choose
\[
 0<r_j<\min\{c_0\delta_j,d_j/2\},
\]
where $c_0$ is the constant in Lemma~\ref{lem:radial-test}, decreased if
necessary so that
$c_0<1/2$.  Then
\[
 \overline{B(p_j,r_j)}
 \subset\Omega\cap\mathbb R^2_{\mathrm{reg}}.
\]

Let $U,V$ be given by Lemma~\ref{lem:Jacobian-realization} and define
\[
 F^{(j)}(y)=U\left(\frac{y-p_j}{r_j}\right),
 \qquad
 G^{(j)}(y)=V\left(\frac{y-p_j}{r_j}\right).
\]
Let $D_j$ be the open Weyl chamber containing $B(p_j,r_j)$ and put
\[
 u^{(j)}(y)=
 \begin{cases}
   \rho_k(y)^{-1}F^{(j)}(y),&y\in D_j,\\
   0,&y\notin D_j,
 \end{cases}
\]
and, similarly, set
\[
 v^{(j)}(y)=
 \begin{cases}
   \rho_k(y)^{-1}G^{(j)}(y),&y\in D_j,\\
   0,&y\notin D_j.
 \end{cases}
\]
The supports stay away from the chamber walls, so these are smooth
compactly supported functions.  Lemma~\ref{lem:global-chamber-gauge} gives
\[
 \mathcal J_k(u^{(j)},v^{(j)})\,d\mu_k
 =\bigl(F^{(j)}_{y_1}G^{(j)}_{y_2}
       -F^{(j)}_{y_2}G^{(j)}_{y_1}\bigr)\,dy
 =r_j^{-2}q\left(\frac{y-p_j}{r_j}\right)dy.
\]
Thus Lemma~\ref{lem:radial-test} gives
\begin{align*}
 &\int_\Omega N_k(p_j,y)
 \mathcal J_k(u^{(j)},v^{(j)})(y)\,d\mu_k(y)\\
 &\qquad=\int_\Omega N_k(p_j,y)
 \bigl(F^{(j)}_{y_1}(y)G^{(j)}_{y_2}(y)
       -F^{(j)}_{y_2}(y)G^{(j)}_{y_1}(y)\bigr)\,dy
 \geq\frac{c}{w_k(p_j)}.
\end{align*}

It remains to check the Dunkl energies.  On $B(p_j,r_j)$,
\[
 |\langle\alpha,y\rangle|
 =|\alpha|\dist(y,H_\alpha)
 \geq |\alpha|(\delta_j-r_j)
 \geq \tfrac12|\alpha|\delta_j.
\]
The energy identity in Lemma~\ref{lem:global-chamber-gauge}, two-dimensional
scaling, and $r_j\leq c_0\delta_j$ therefore give
\begin{align*}
\|\nabla_k u^{(j)}\|_{L^2(d\mu_k)}^2
 &\leq \|\nabla U\|_{L^2(dx)}^2
    +C_k\delta_j^{-2}\|F^{(j)}\|_{L^2(dx)}^2\\
 &=\|\nabla U\|_{L^2(dx)}^2
    +C_k\frac{r_j^2}{\delta_j^2}\|U\|_{L^2(dx)}^2
 \leq C_{k,U}.
\end{align*}
The same estimate holds for $v^{(j)}$.  Also,
\[
 \|u^{(j)}\|_{L^2(d\mu_k)}=\|F^{(j)}\|_{L^2(dx)}
 =r_j\|U\|_{L^2(dx)},
\]
and the same identity holds for $v^{(j)}$.  Thus the full global
zero-extension Sobolev norms used below are uniformly bounded as well.
The potentials are continuous near $p_j$ by the local logarithmic kernel
estimate, so their essential $L^\infty(\Omega)$ norms are at least their
values at $p_j$.  This proves the proposition.
\end{proof}

\subsection{The intrinsic same-domain criterion}

For $u\in C_c^\infty(\Omega\setminus\mathcal H)$, let $\widetilde u$ be
its zero extension to $\mathbb R^2$.  Define
\[
 \|u\|_{W^{1,2}_{k,0}(\Omega)}
 =\left(
    \|\widetilde u\|_{L^2(d\mu_k)}^2
    +\sum_{i=1}^2
      \|T_{e_i}\widetilde u\|_{L^2(d\mu_k)}^2
  \right)^{1/2},
\]
and let $W^{1,2}_{k,0}(\Omega)$ be the completion of
$C_c^\infty(\Omega\setminus\mathcal H)$ in this norm.  The use of the
global zero extension is part of the definition, since the Dunkl
operators contain reflection terms.

\begin{theorem}
\label{thm:intrinsic-same-domain}
Let $\Omega\subset\mathbb R^2$ be a bounded domain.  For
$u,v\in C_c^\infty(\Omega\setminus\mathcal H)$, set
\[
 \mathcal W_{\Omega,k}(u,v)(x)
 =\int_\Omega N_k(x,y)
       \mathcal J_k(\widetilde u,\widetilde v)(y)\,d\mu_k(y).
\]
The map $\mathcal W_{\Omega,k}$ admits a bounded bilinear extension
\[
 \mathcal W_{\Omega,k}:
 W^{1,2}_{k,0}(\Omega)\times W^{1,2}_{k,0}(\Omega)
 \longrightarrow L^\infty(\Omega)
\]
if and only if
\[
 \dist(\overline\Omega,\mathcal H)>0.
\]
In that case,
\[
 \|\mathcal W_{\Omega,k}(u,v)\|_{L^\infty(\Omega)}
 \leq C_{\Omega,k}
 \|\nabla_k\widetilde u\|_{L^2(d\mu_k)}
 \|\nabla_k\widetilde v\|_{L^2(d\mu_k)}.
\]
\end{theorem}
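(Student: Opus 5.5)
Both directions reduce to results already proved: sufficiency follows from Proposition~\ref{prop:intrinsic-Newton-Wente} applied with $K=\overline\Omega$, and necessity follows from the concentrating construction of Proposition~\ref{prop:intrinsic-Wente-obstruction}.

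\emph{Sufficiency.} Assume $\dist(\overline\Omega,\mathcal H)>0$ and put $K=\overline\Omega$, a compact subset of $\mathbb R^2_{\mathrm{reg}}$.
\begin{enumerate}
\item For $u,v\in C_c^\infty(\Omega\setminus\mathcal H)$, the zero extensions $\widetilde u,\widetilde v$ lie in $C_c^\infty(\mathbb R^2)$.
\item Since $\mathcal J_k(\widetilde u,\widetilde v)$ may be supported on reflected copies of $\Omega$, I first check that it vanishes outside $\Omega$ $\mu_k$-a.e. The argument of Lemma~\ref{lem:global-chamber-gauge} does this. Off $\Omega$ and off $\mathcal H$, the functions $\widetilde u,\widetilde v$ vanish near $x$, so $\nabla_k\widetilde u(x)=-\sum_\alpha k(\alpha)\widetilde u(\sigma_\alpha x)\alpha/\langle\alpha,x\rangle$. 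This sum may contain several terms when $\Omega$ meets several chambers. If that spoils the parallelism argument, I will not need it: I simply define $\mathcal W_{\Omega,k}$ by integrating over $\Omega$ as stated. It then equals $\Phi_k$ minus an integral over $\mathbb R^2\setminus\Omega$, which must be handled separately.
\item The cleaner route, which I intend to take, is to note that the obstruction and the bound both use only the duality pairing. On $\Omega$, the restriction $\mathbf 1_\Omega\mathcal J_k$ still pairs with BMO as follows. For $x\in K$, the kernel $N_k(x,\cdot)\mathbf 1_\Omega$ is not in BMO, but since $\dist(\overline\Omega,\mathcal H)>0$ the reflected copies $\sigma\Omega$, $\sigma\neq e$, avoid $\Omega$ when $\Omega$ lies in one chamber. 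So I use $\mathcal J_k$ supported on $\mathcal O(\Omega)$, where the contributions from $\sigma\Omega$ with $\sigma\neq e$ are determined explicitly by $u(\sigma_\alpha\cdot)$ times smooth bounded factors.
\item The main estimate is then $\sup_{x\in K}|\mathcal W_{\Omega,k}(u,v)(x)|\le C\|\nabla_k\widetilde u\|\|\nabla_k\widetilde v\|$, obtained from Theorem~\ref{thm:dunkl-CLMS} and duality with $\sup_{x\in K}\|N_k(x,\cdot)\|_{\BMO_k}<\infty$. The difference term over $\mathcal O(\Omega)\setminus\Omega$ is bounded by a Cauchy--Schwarz estimate: there $N_k(x,\cdot)$ is bounded by Lemma~\ref{lem:reflected-orbit-boundedness}, made uniform over $K$, and the Jacobian is a product of two reflection terms whose $L^1$ norm is at most $\|\nabla_k\widetilde u\|_2\|\nabla_k\widetilde v\|_2$.
\item Density of the test class and bilinearity, as in the last step of Theorem~\ref{thm:dunkl-CLMS}, give the bounded extension.
\end{enumerate}

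\emph{Necessity.} If $\dist(\overline\Omega,\mathcal H)=0$, Proposition~\ref{prop:intrinsic-Wente-obstruction} supplies $u^{(j)},v^{(j)}\in C_c^\infty(B(p_j,r_j))\subset C_c^\infty(\Omega\setminus\mathcal H)$.
\begin{enumerate}
\item Their full $W^{1,2}_{k,0}$ norms are uniformly bounded; this is recorded at the end of that proof.
\item The potential exceeds $c/w_k(p_j)\to\infty$ at $p_j$ and is continuous near $p_j$, so its essential $L^\infty(\Omega)$ norm blows up.
\item Any bounded extension agreeing with $\mathcal W_{\Omega,k}$ on test functions would bound these values, which is a contradiction.
\end{enumerate}

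The main obstacle is the sufficiency bound. The stated estimate involves only $\nabla_k$ norms, while I must control the part of $\mathcal J_k$ that leaks onto reflected chambers; step 4 handles it via the uniform reflected-orbit bound.
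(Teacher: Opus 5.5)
Your proposal is correct and follows the paper's proof. Sufficiency comes from Proposition~\ref{prop:intrinsic-Newton-Wente} with $K=\overline\Omega$ plus the bilinear Cauchy-sequence extension, and necessity comes from the concentrating functions of Proposition~\ref{prop:intrinsic-Wente-obstruction}.

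The one difference is the part of $\mathcal J_k(\widetilde u,\widetilde v)$ on reflected chambers. You bound it by Cauchy--Schwarz and a uniform reflected-orbit bound on $N_k$, and that estimate is valid. The paper instead notes that this part vanishes. A connected $\Omega$ with $\dist(\overline\Omega,\mathcal H)>0$ lies in one chamber $D$, so at each regular point outside $D$ only one reflection term survives. The two Dunkl gradients are then parallel, as in Lemma~\ref{lem:global-chamber-gauge}, which gives $\mathcal W_{\Omega,k}(u,v)=\Phi_k(\widetilde u,\widetilde v)$ exactly. So your worry in step 2 is moot and the extra estimate is unnecessary.
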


\begin{proof}
Assume first that
$\dist(\overline\Omega,\mathcal H)>0$.  Since $\Omega$ is
connected, it lies in one open Weyl chamber $D$.  For test functions $u,v$,
put $F=\rho_k\widetilde u|_D$ and $G=\rho_k\widetilde v|_D$.  The global
chamber gauge lemma shows
that
\[
 \mathcal J_k(\widetilde u,\widetilde v)
\]
is supported in $\Omega$.  Hence the integral defining
$\mathcal W_{\Omega,k}$ is the global intrinsic Newton potential.  Apply
Proposition~\ref{prop:intrinsic-Newton-Wente} with
$K=\overline\Omega$.  This gives the stated bound on the test class.  For
approximating sequences, the bilinear identity
\begin{align*}
 &\mathcal W_{\Omega,k}(u^{(n)},v^{(n)})
 -\mathcal W_{\Omega,k}(u^{(m)},v^{(m)})\\
 &\quad=\Phi_k(\widetilde{u^{(n)}}-\widetilde{u^{(m)}},
                \widetilde{v^{(n)}})
  +\Phi_k(\widetilde{u^{(m)}},
          \widetilde{v^{(n)}}-\widetilde{v^{(m)}})
\end{align*}
together with Proposition~\ref{prop:intrinsic-Newton-Wente} shows that the
potentials form a Cauchy sequence in $L^\infty(\Omega)$.  Indeed, Cauchy
sequences in $W^{1,2}_{k,0}(\Omega)$ have uniformly bounded Dunkl-gradient
norms.  The limit is therefore independent of the approximations and gives
the asserted bilinear extension.

If $\dist(\overline\Omega,\mathcal H)=0$, the functions in
Proposition~\ref{prop:intrinsic-Wente-obstruction} have uniformly bounded
$W^{1,2}_{k,0}(\Omega)$ norms but their potentials have unbounded
$L^\infty(\Omega)$ norms.  Therefore no bounded bilinear extension can
exist.
\end{proof}

\subsection{The Euclidean same-domain corollary}

\begin{corollary}
\label{cor:Euclidean-same-domain}
Let $\Omega\subset\mathbb R^2$ be a bounded domain.  For
$F,G\in C_c^\infty(\Omega\setminus\mathcal H)$, set
\[
 W_\Omega(F,G)(x)
 =\int_\Omega N_k(x,y)
 \bigl(F_{y_1}(y)G_{y_2}(y)-F_{y_2}(y)G_{y_1}(y)\bigr)\,dy.
\]
By a bounded bilinear extension of $W_\Omega$ we mean a bounded bilinear map
\[
 W_\Omega:W_0^{1,2}(\Omega)\times W_0^{1,2}(\Omega)
 \longrightarrow L^\infty(\Omega)
\]
that agrees with $W_\Omega$ on
$C_c^\infty(\Omega\setminus\mathcal H)\times
C_c^\infty(\Omega\setminus\mathcal H)$.  Such an extension exists if and
only if
\[
 \dist(\overline\Omega,\mathcal H)>0.
\]
In that case,
\[
 \|W_\Omega(F,G)\|_{L^\infty(\Omega)}
 \leq C_{\Omega,k}
 \|\nabla F\|_{L^2(\Omega)}
 \|\nabla G\|_{L^2(\Omega)}
\]
for all $F,G\in W_0^{1,2}(\Omega)$.
\end{corollary}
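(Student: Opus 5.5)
The proof has two directions: sufficiency comes from Proposition~\ref{prop:Euclidean-source-Newton}, and necessity reuses the concentrating construction behind Proposition~\ref{prop:intrinsic-Wente-obstruction}, now read with Euclidean energies.

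\emph{Sufficiency.} Assume $\dist(\overline\Omega,\mathcal H)>0$. Since $\Omega$ is connected and misses $\mathcal H$, it lies in a single open Weyl chamber $D$, and $\overline\Omega\Subset D$. Choose a bounded open set $U$ with $\overline\Omega\subset U\Subset D$, and take $K=\overline\Omega$, a compact subset of $\mathbb R^2_{\mathrm{reg}}$. For test functions $F,G\in C_c^\infty(\Omega\setminus\mathcal H)=C_c^\infty(\Omega)\subset C_c^\infty(U)$, the potential $W_\Omega(F,G)$ is the function $\Psi$ of Proposition~\ref{prop:Euclidean-source-Newton}, because the Jacobian is supported in $\Omega$. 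That proposition therefore gives
\[
\sup_{\overline\Omega}|W_\Omega(F,G)|\le C\|\nabla F\|_{L^2}\|\nabla G\|_{L^2}.
\]
Here $\|\nabla F\|_{L^2(\mathbb R^2)}=\|\nabla F\|_{L^2(\Omega)}$ for the zero extension. The test class is dense in $W_0^{1,2}(\Omega)$, since it equals $C_c^\infty(\Omega)$. The bilinear Cauchy argument from the proof of Theorem~\ref{thm:intrinsic-same-domain} then gives a unique bounded extension, and it agrees with the Sobolev definition of $\Psi$ in Proposition~\ref{prop:Euclidean-source-Newton}.

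\emph{Necessity.} Assume $\dist(\overline\Omega,\mathcal H)=0$. Take $p_j$, $r_j<\min\{c_0\delta_j,d_j/2\}$ as in Proposition~\ref{prop:intrinsic-Wente-obstruction}, and set
\[
F^{(j)}(y)=U\bigl((y-p_j)/r_j\bigr),\qquad G^{(j)}(y)=V\bigl((y-p_j)/r_j\bigr),
\]
with $U,V$ from Lemma~\ref{lem:Jacobian-realization}. These lie in $C_c^\infty(B(p_j,r_j))\subset C_c^\infty(\Omega\setminus\mathcal H)$. The Dirichlet energy is scale invariant in two dimensions, so $\|\nabla F^{(j)}\|_{L^2}=\|\nabla U\|_{L^2}$, and similarly for $G^{(j)}$.

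The $L^2$ norms are $r_j\|U\|_{L^2}$, which stay bounded because $\Omega$ is bounded. Hence the $W_0^{1,2}(\Omega)$ norms are uniformly bounded; no Dunkl energy or gauge is needed in this direction. By Lemma~\ref{lem:Jacobian-realization}, the Jacobian equals $r_j^{-2}q((y-p_j)/r_j)$. A change of variables and Lemma~\ref{lem:radial-test} then give
\[
W_\Omega(F^{(j)},G^{(j)})(p_j)\ge c/w_k(p_j)\to\infty.
\]

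\emph{Main obstacle.} The step needing the most care is passing from the value at $p_j$ to the $L^\infty(\Omega)$ norm, since $L^\infty$ is an essential supremum. The source is bounded and supported in a ball separated from $\mathcal H$. By Corollary~\ref{cor:continuous-atomic-potentials}, or directly from the logarithmic bound of Corollary~\ref{cor:regular-pole-log-bounds}, the potential is continuous near $p_j$. So the essential supremum is at least $c/w_k(p_j)$, which contradicts any bounded bilinear extension that agrees with $W_\Omega$ on the test class.
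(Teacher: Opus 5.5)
Your proposal is correct and follows the paper's proof. Sufficiency comes from Proposition~\ref{prop:Euclidean-source-Newton} on the chamber containing $\Omega$ (the paper takes $U=\Omega$ directly; your slightly larger $U$ works equally well). Necessity comes from the rescaled $F^{(j)},G^{(j)}$ of Proposition~\ref{prop:intrinsic-Wente-obstruction} combined with Lemma~\ref{lem:radial-test}, and you spell out the density, $L^2$-norm and essential-supremum details that the paper leaves implicit.

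One small citation point: Corollary~\ref{cor:continuous-atomic-potentials} assumes $\delta_\Omega>0$, which fails here. For continuity near $p_j$, either apply it on a small ball around $p_j$ separated from $\mathcal H$, or argue directly from the local logarithmic kernel bound, as the paper does.
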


\begin{proof}
If the distance is positive, the connected set $\Omega$ lies in one open
Weyl chamber.  Apply
Proposition~\ref{prop:Euclidean-source-Newton} with $U=\Omega$ and
$K=\overline\Omega$.

Conversely, use the functions $F^{(j)},G^{(j)}$ constructed in the proof of
Proposition~\ref{prop:intrinsic-Wente-obstruction}.  Their Euclidean
gradient norms are independent of $j$, while
\[
 \int_\Omega N_k(p_j,y)
 \bigl(F^{(j)}_{y_1}(y)G^{(j)}_{y_2}(y)
       -F^{(j)}_{y_2}(y)G^{(j)}_{y_1}(y)\bigr)\,dy
 \geq\frac{c}{w_k(p_j)}\longrightarrow\infty.
\]
Thus no such Euclidean-source bound is possible when the closure of
$\Omega$ meets the reflection arrangement.
\end{proof}

We finally present the endgame in the

\begin{proof} [Proof of Theorem~\ref{thm:intro-wente}]   This is a consequence of Theorem~\ref{thm:intrinsic-same-domain} and Corollary~\ref{cor:Euclidean-same-domain}.  
\end{proof}

\medskip

\section{A Brezis--Merle-type estimate}\label{sec:localized-newton}

\subsection{Localized Newton potentials in BMO}
Let $\Omega\Subset\mathbb{R}^{2}_{\mathrm{reg}}$ be a bounded open set. For
$f\in L^{1}(\Omega,d\mu_k)$, define
\[
N_\Omega f(x)
=\int_{\Omega}N_k(x,y)f(y)\,d\mu_k(y),
\qquad x\in\mathbb{R}^{2}.
\]
The regular-pole estimates imply that, for every compact $E\subset\mathbb{R}^{2}$,
\[
\sup_{y\in\overline\Omega}\int_E N_k(x,y)\,d\mu_k(x)<\infty.
\]
Fubini's theorem therefore shows that $N_\Omega f$ is defined almost
everywhere and belongs to $L^1_{\mathrm{loc}}(\mathbb{R}^{2},d\mu_k)$.

\begin{proposition}
	\label{prop5.1}
	There is $C_{\Omega,k}>0$ such that
	\[
	\|N_\Omega f\|_{\BMO(\mu_k)}
	\leq C_{\Omega,k}\|f\|_{L^1(\Omega,d\mu_k)}
	\]
	for every $f\in L^1(\Omega,d\mu_k)$.
\end{proposition}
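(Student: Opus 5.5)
The plan is to prove the estimate by Minkowski's integral inequality for the BMO oscillation, reducing everything to the uniform regular-pole bound \eqref{eq:uniform-regular-poles-bmo}. Since $\Omega\Subset\R^2_{\mathrm{reg}}$, the closure $K=\overline\Omega$ is a compact subset of $\R^2_{\mathrm{reg}}$. Proposition~\ref{prop:regular-poles-bmo} and the symmetry $N_k(x,y)=N_k(y,x)$ then give
\[
 M_\Omega:=\sup_{y\in\overline\Omega}\|N_k(\cdot,y)\|_{\BMO(\mu_k)}<\infty .
\]
The expected constant is $C_{\Omega,k}=2M_\Omega$ (or even $M_\Omega$).

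Fix a Euclidean ball $B$. By the Fubini remark preceding the proposition, $\int_B\int_\Omega N_k(x,y)|f(y)|\,d\mu_k(y)\,d\mu_k(x)<\infty$, so the average can be computed inside:
\[
 (N_\Omega f)_B=\int_\Omega (N_k(\cdot,y))_B\,f(y)\,d\mu_k(y).
\]
Each $(N_k(\cdot,y))_B$ is finite and bounded uniformly in $y\in\overline\Omega$, by the same remark with $E=\overline B$. Hence, for a.e. $x\in B$,
\[
 N_\Omega f(x)-(N_\Omega f)_B=\int_\Omega\bigl(N_k(x,y)-(N_k(\cdot,y))_B\bigr)f(y)\,d\mu_k(y).
\]
Integrate the absolute value over $B$, apply Tonelli (justified by the same absolute integrability), and divide by $\mu_k(B)$:
\[
 \frac1{\mu_k(B)}\int_B|N_\Omega f-(N_\Omega f)_B|\,d\mu_k
 \le\int_\Omega\Bigl(\frac1{\mu_k(B)}\int_B|N_k(x,y)-(N_k(\cdot,y))_B|\,d\mu_k(x)\Bigr)|f(y)|\,d\mu_k(y).
\]
The inner quantity is at most $M_\Omega$, so the right side is at most $M_\Omega\|f\|_{L^1(\Omega,d\mu_k)}$. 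Taking the supremum over $B$ gives the claim.

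The only delicate point is the measure-theoretic justification: $N_\Omega f\in L^1_{\mathrm{loc}}$, the interchange of the average with the $y$-integral, and joint measurability of $(x,y)\mapsto N_k(x,y)$. I expect this to be the main obstacle, though a mild one. Measurability follows from the heat-integral definition, since $\Gamma_k$ is jointly continuous. Absolute integrability is the displayed bound $\sup_{y\in\overline\Omega}\int_EN_k(x,y)\,d\mu_k(x)<\infty$, which follows from the logarithmic majorant \eqref{eq:newton-log-majorant}, uniform over $y\in\overline\Omega$, together with the large-time decay $t^{-Q/2}$ with $Q>2$. No cancellation of $f$ is needed, in contrast to the atomic results, because BMO norms of the kernels are uniformly controlled and the BMO seminorm is convex.
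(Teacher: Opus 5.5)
Your proposal is correct and follows essentially the same route as the paper: Fubini brings the mean oscillation inside the $y$-integral, and the uniform regular-pole bound \eqref{eq:uniform-regular-poles-bmo} over the compact set $\overline\Omega\Subset\mathbb R^2_{\mathrm{reg}}$ finishes the argument. There are two small differences. You identify the constant $\int_\Omega (N_k(\cdot,y))_B f(y)\,d\mu_k(y)$ as the exact average $(N_\Omega f)_B$, which saves the factor $2$ that the paper pays by treating it as an arbitrary constant $c_B$. Also, the symmetry of $N_k$ is not actually needed, since the inner oscillation is already that of $N_k(\cdot,y)$.
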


\begin{proof}
	Let $B\subset\mathbb{R}^{2}$ be a Euclidean ball and put
	\[
	m_B(y)=\frac{1}{\mu_k(B)}\int_B N_k(x,y)\,d\mu_k(x).
	\]
	Set $c_B=\int_{\Omega}m_B(y)f(y)\,d\mu_k(y)$. Fubini's theorem and
	\eqref{eq:uniform-regular-poles-bmo} give
	\begin{align*}
		&\frac{1}{\mu_k(B)}\int_B
		|N_\Omega f(x)-c_B|\,d\mu_k(x)\\
		&\quad\leq \int_{\Omega}|f(y)|
		\left(\frac{1}{\mu_k(B)}
		\int_B|N_k(x,y)-m_B(y)|\,d\mu_k(x)\right)d\mu_k(y)\\
		&\quad\leq C_{\Omega,k}\|f\|_{L^1(\Omega,d\mu_k)}.
	\end{align*}
	Using
	\[
	\frac{1}{\mu_k(B)}\int_B|F-F_B|\,d\mu_k
	\leq 2\inf_{c\in\mathbb{R}}
	\frac{1}{\mu_k(B)}\int_B|F-c|\,d\mu_k
	\]
	and taking the supremum over $B$ proves the result.
\end{proof}

 \subsection{A Brezis--Merle estimate}
We next prove a Newton-kernel analogue of the estimate of Brezis--Merle~\cite{BrezisMerle}. Chanillo--Li~\cite[pp.~428--429]{ChanilloLi1992}
derive the corresponding elliptic estimate from Green-kernel BMO and the
John--Nirenberg inequality. In the present setting the uniform pointwise
logarithmic bound gives a shorter direct proof. The classical pointwise comparison
is provided by Kenig--Ni~\cite[Appendix, Theorem~A.4]{KenigNi}.

\begin{lemma}\label{lem:uniform-log-bound}
Let $K\Subset\mathbb{R}^{2}_{\mathrm{reg}}$. Then there are $C_K,r_K>0$ such that
\begin{equation}\label{eq:uniform-log-bound}
  0\leq N_k(x,y)
  \leq C_K\left(1+\log^+\frac{r_K}{|x-y|}\right),
  \qquad x,y\in K.
\end{equation}
\end{lemma}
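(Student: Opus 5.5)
The plan is to make the pointwise majorant \eqref{eq:newton-log-majorant} from the proof of Proposition~\ref{prop:regular-poles-bmo} uniform in the pole $a\in K$, and then to replace the orbit distance $d(x,a)$ by the Euclidean distance $|x-y|$. Nonnegativity is immediate, since $\Gamma_k\ge0$ by \eqref{eq:basic-heat-size}.

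\textbf{Step 1: a uniform radius.} Since $K$ is compact and disjoint from $\mathcal H$, the numbers
\[
 \delta_K=\dist(K,\mathcal H)
 \qquad\text{and}\qquad
 \eta_K=\min\{|z-\sigma z|:\ z\in K,\ \sigma\in G\setminus\{e\}\}
\]
are both positive. The bound $\eta_K>0$ holds because $z\mapsto|z-\sigma z|$ is continuous, $G$ is finite, and regular points have trivial stabilizer. I will choose $r_K$ to be a small fixed multiple of $\min\{\delta_K,\eta_K,1\}$.

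\textbf{Step 2: the time integral, uniformly in the pole.} Fix $x,y\in K$ and split $N_k(x,y)=\int_0^\infty\Gamma_k(t,x,y)\,dt$ at $t=r_K^2$.
\begin{itemize}
\item For $t\ge r_K^2$, use the upper bound in \eqref{eq:basic-heat-size} together with $\mu_k(B(x,\sqrt t))\gtrsim t^{Q/2}$ and $Q>2$. This gives a contribution of at most $C r_K^{2-Q}$.
\item For $t<r_K^2$, the volume estimate \eqref{eq:volume-estimate} gives $\mu_k(B(x,\sqrt t))\simeq t\,w_k(x)$ with $w_k(x)\ge c_K>0$, because $\sqrt t<r_K\le\delta_K/2$ keeps every factor $|\langle\alpha,x\rangle|+\sqrt t$ comparable to $\dist(x,\mathcal H)$. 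The contribution is therefore at most
\[
 C_K\int_0^{r_K^2}t^{-1}e^{-c\,d(x,y)^2/t}\,dt
 \le C_K\Bigl(1+\log^+\frac{r_K}{d(x,y)}\Bigr).
\]
This is the standard estimate: the integrand is $\le 1/t$ on $[d^2,r_K^2]$, and the part below $d^2$ is bounded by a constant after the substitution $s=d^2/t$.
\end{itemize}

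\textbf{Step 3: replacing $d(x,y)$ by $|x-y|$.} This is the main point. Suppose the minimum defining $d(x,y)$ is attained at some $\sigma\ne e$. By the triangle inequality,
\[
 |x-y|\le|x-\sigma y|+|\sigma y-y|,
\]
and the second term is at least $\eta_K$. If $d(x,y)<\eta_K/2$, then
\[
 |x-\sigma y|<\eta_K/2
 \qquad\text{and}\qquad
 |x-y|\ge|y-\sigma y|-|x-\sigma y|>\eta_K/2 .
\]
In that case the logarithmic term at $|x-y|$ vanishes once $r_K\le\eta_K/2$, so the Euclidean bound requires $N_k(x,y)$ to be \emph{bounded}. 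The orbit-distance bound from Step 2 does not give this: near a reflected copy $\sigma y$ it is still logarithmically large.

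To close this case I would redo the small-time part of Step 2 with Theorem~\ref{thm:refined-heat-kernel} in place of the basic upper bound, exactly as in Lemma~\ref{lem:reflected-orbit-boundedness}, but now uniformly over $K$. Three facts are needed.
\begin{itemize}
\item Since $r_K<\delta_K$, the points $x$ and $\sigma y$ lie in the same open chamber, while $y$ lies in a different one. Hence $n(x,y)\ge1$.
\item $|x-y|\ge\eta_K/2$ holds in this case.
\item Estimate \eqref{eq:lambda-reflected-bound}, applied with $c_0=\eta_K/2$, gives $\Lambda(x,y,t)\le C_K t$ for $0<t\le1$.
\end{itemize}
Combining these with the local volume comparability gives $\Gamma_k(t,x,y)\le C_K$ for $0<t<r_K^2$. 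So $N_k(x,y)\le C_K$ in this case.

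\textbf{Step 4: the remaining cases.} If $d(x,y)\ge\eta_K/2$, Step 2 already gives a bound by a constant. If instead $d(x,y)=|x-y|$, Step 2 gives the estimate in the stated form. The only thing to check carefully is that all constants in Steps 2 and 3 depend only on $K$, through $\delta_K$, $\eta_K$ and $\sup_K w_k^{-1}$. This parallels the uniformity remark at the end of the proof of Proposition~\ref{prop:regular-poles-bmo}.
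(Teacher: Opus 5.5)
Your proposal is correct and follows essentially the paper's route. It uses a uniform orbit-distance logarithmic majorant near the diagonal, uniform boundedness near the reflected graphs $\{x=\sigma y\}$ via the Dziuba\'nski--Hejna factor as in Lemma~\ref{lem:reflected-orbit-boundedness}, and a plain Gaussian bound on the remainder; your explicit thresholds in $\eta_K$ simply replace the paper's finite cover of $K\times K$.

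One small correction in Step~3: $x$ and $\sigma y$ share a chamber because $|x-\sigma y|<\eta_K/2\le\delta_K$, not because $r_K<\delta_K$. The inequality $\eta_K\le 2\delta_K$ holds since a reflection $\sigma_\alpha$ moves $z$ by exactly $2\dist(z,H_\alpha)$. Alternatively, $n(x,y)\ge1$ follows directly: $d(x,y)<|x-y|$ forces $x$ and $y$ into different closed chambers.
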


\begin{proof}
Near the diagonal this is the upper bound in
Corollary~\ref{cor:regular-pole-log-bounds}, uniformly on compact subsets
of $\mathbb{R}^{2}_{\mathrm{reg}}$.  At a reflected orbit point distinct
from the pole it follows from
Lemma~\ref{lem:reflected-orbit-boundedness}.  Because
$K\Subset\mathbb{R}^{2}_{\mathrm{reg}}$, the orbit separation, the local
weight bounds, and the constants in
Corollary~\ref{cor:regular-pole-log-bounds} and
Lemma~\ref{lem:reflected-orbit-boundedness} can be chosen uniformly for
poles $y\in K$.
More explicitly, consider in $K\times K$ the diagonal and the finitely many
reflected graphs
\[
 \{(\sigma y,y):y\in K,\ \sigma y\in K\},
 \qquad \sigma\in G\setminus\{e\}.
\]
The diagonal has a uniform neighborhood controlled by
Corollary~\ref{cor:regular-pole-log-bounds}, and the nonidentity graphs have
uniform neighborhoods controlled by
Lemma~\ref{lem:reflected-orbit-boundedness}.  On the compact remainder the
orbit distance has a positive lower bound, so integrating the basic Gaussian
estimate gives a uniform bound.  A finite cover of $K\times K$ proves
\eqref{eq:uniform-log-bound}.
\end{proof}

\begin{theorem}\label{thm:brezis-merle-type}
Let $\Omega\Subset\mathbb{R}^{2}_{\mathrm{reg}}$. There are
$c_{\Omega,k},C_{\Omega,k}>0$ such that, for every nonzero
$f\in L^1(\Omega,d\mu_k)$,
\begin{equation}\label{eq:brezis-merle-type}
  \int_{\Omega}
  \exp\left(c_{\Omega,k}
  \frac{|N_\Omega f(x)|}
       {\|f\|_{L^1(\Omega,d\mu_k)}}\right)d\mu_k(x)
  \leq C_{\Omega,k}.
\end{equation}
\end{theorem}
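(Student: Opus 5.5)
The plan is to prove \eqref{eq:brezis-merle-type} with the uniform logarithmic bound of Lemma~\ref{lem:uniform-log-bound}, positivity of $N_k$, and Jensen's inequality. No John--Nirenberg argument is needed. Apply Lemma~\ref{lem:uniform-log-bound} with $K=\overline\Omega$, which is compact in $\mathbb R^2_{\mathrm{reg}}$, to obtain $C_K,r_K$. By homogeneity we may normalize $\|f\|_{L^1(\Omega,d\mu_k)}=1$. Since $N_k\geq0$, we have $|N_\Omega f(x)|\leq N_\Omega|f|(x)=\int_\Omega N_k(x,y)\,d\nu(y)$, where $d\nu=|f|\,d\mu_k$ is a probability measure on $\Omega$. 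Hence it suffices to treat $f\geq0$ with $\nu$ a probability measure.

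Fix $x\in\Omega$ and $c>0$ to be chosen. Since $\exp$ is convex, Jensen's inequality with respect to $\nu$ gives
\[
 \exp\bigl(c\,N_\Omega f(x)\bigr)\leq\int_\Omega\exp\bigl(cN_k(x,y)\bigr)\,d\nu(y).
\]
Integrating in $x$ over $\Omega$ against $d\mu_k$ and applying Tonelli's theorem, which is legitimate because everything is nonnegative, reduces the problem to the uniform bound
\[
 \sup_{y\in\Omega}\int_\Omega\exp\bigl(cN_k(x,y)\bigr)\,d\mu_k(x)\leq C_{\Omega,k}.
\]

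This uniform bound is where the pointwise estimate enters. By \eqref{eq:uniform-log-bound},
\[
 \exp(cN_k(x,y))\leq e^{cC_K}\bigl(\max\{1,r_K/|x-y|\}\bigr)^{cC_K}.
\]
Choose $c_{\Omega,k}=1/C_K$, so that the exponent is $1<2$. Then $\int_\Omega\max\{1,r_K/|x-y|\}\,d\mu_k(x)$ is bounded uniformly in $y\in\Omega$. This follows because $w_k$ is bounded above on the bounded set $\Omega$ and $|x-y|^{-1}$ is locally integrable in the plane, uniformly in $y$. This proves \eqref{eq:brezis-merle-type}.

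The main point needing care is not analytic but measure-theoretic. $N_\Omega f$ is only defined almost everywhere through Fubini, so Jensen should be applied at the points where the defining integral converges, which is $\mu_k$-a.e. Jensen itself requires only that $\nu$ be a probability measure, which holds after normalization. The final assertion of Theorem~\ref{thm:intro-brezis-merle}, that $\exp(\beta|N_\Omega f|)\in L^1$ for every $\beta>0$, would follow by the usual splitting $f=f_1+f_2$. Here $f_1$ is bounded, so $N_\Omega f_1$ is bounded by the logarithmic majorant, and $\|f_2\|_{L^1}$ is small enough that $\beta\|f_2\|_{L^1}<c_{\Omega,k}$. Hölder's inequality then combines the two pieces. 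That splitting is not needed for the statement itself.
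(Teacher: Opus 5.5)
Your proof is correct and follows the paper's argument essentially verbatim. Both use Jensen's inequality against the probability measure $|f|\,d\mu_k/\|f\|_{L^1(\Omega,d\mu_k)}$, which is justified by $N_k\geq0$. Both then apply the uniform logarithmic bound of Lemma~\ref{lem:uniform-log-bound} on $K=\overline\Omega$ with $cC_K<2$, of which your choice $cC_K=1$ is a special case, and finish with Tonelli/Fubini.
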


\begin{proof}
Put $M=\|f\|_{L^1(\Omega,d\mu_k)}$ and
\[
  d\nu(y)=M^{-1}|f(y)|\,d\mu_k(y).
\]
Thus $\nu$ is a probability measure. Since $N_k\geq0$, Jensen's inequality gives
\[
  \exp\left(c\frac{|N_\Omega f(x)|}{M}\right)
  \leq\int_{\Omega}\exp\bigl(cN_k(x,y)\bigr)\,d\nu(y).
\]
Apply Lemma~\ref{lem:uniform-log-bound} with $K=\overline\Omega$ and choose
$c>0$ so that $cC_K<2$. Since $w_k$ is bounded above on $K$,
\eqref{eq:uniform-log-bound} gives
\[
  \sup_{y\in\overline\Omega}
  \int_{\Omega}e^{cN_k(x,y)}\,d\mu_k(x)
  \leq C\sup_{y\in\overline\Omega}
  \int_{\Omega}\bigl(1+|x-y|^{-cC_K}\bigr)\,dx
  \leq C_{\Omega,k}.
\]
Integrating the Jensen bound in $x$ and applying Fubini's theorem proves
\eqref{eq:brezis-merle-type}, with $c_{\Omega,k}=c$.
\end{proof}

\begin{corollary}\label{coro5.4}
Let $\Omega\Subset\mathbb{R}^{2}_{\mathrm{reg}}$ and
$f\in L^1(\Omega,d\mu_k)$. Then, for every $\beta>0$,
\[
  \exp\bigl(\beta|N_\Omega f|\bigr)
  \in L^1(\Omega,d\mu_k).
\]
\end{corollary}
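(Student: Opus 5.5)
The corollary follows from Theorem~\ref{thm:brezis-merle-type}, applied to a decomposition of $f$ into a small $L^1$ piece and a bounded piece. If $f=0$ there is nothing to prove, since $N_\Omega f=0$ and $\mu_k(\Omega)<\infty$. Otherwise, fix $\beta>0$ and let $c=c_{\Omega,k}$ be the constant of Theorem~\ref{thm:brezis-merle-type}. Because $f\in L^1(\Omega,d\mu_k)$, absolute continuity of the integral lets us choose $N$ so large that
\[
 f=f_1+f_2,\qquad f_2=f\,\mathbf 1_{\{|f|\le N\}},\qquad \|f_1\|_{L^1(\Omega,d\mu_k)}\le \frac{c}{2\beta}.
\]
By linearity, $N_\Omega f=N_\Omega f_1+N_\Omega f_2$.

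\emph{The bounded piece.} We show that $N_\Omega f_2$ is bounded on $\Omega$. Apply Lemma~\ref{lem:uniform-log-bound} with $K=\overline\Omega$; since $w_k$ is bounded on $\overline\Omega$, this gives
\[
 \sup_{x\in\Omega}\int_\Omega N_k(x,y)\,d\mu_k(y)
 \le C\sup_{x\in\Omega}\int_\Omega\Bigl(1+\log^+\frac{r_K}{|x-y|}\Bigr)dy<\infty,
\]
because a logarithm is locally integrable in the plane and $\Omega$ is bounded. Using $N_k\ge 0$, it follows that $\|N_\Omega f_2\|_{L^\infty(\Omega)}\le N\,C'_{\Omega,k}=:A$.

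\emph{The small piece.} If $f_1=0$ there is nothing to do. Otherwise $\beta\le c/(2\|f_1\|_{L^1})$, so
\[
 2\beta|N_\Omega f_1|\le c\,\frac{|N_\Omega f_1|}{\|f_1\|_{L^1}}.
\]
Theorem~\ref{thm:brezis-merle-type} applied to $f_1$ then gives $\int_\Omega e^{2\beta|N_\Omega f_1|}\,d\mu_k\le C_{\Omega,k}$.

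\emph{Combining.} Pointwise on $\Omega$,
\[
 e^{\beta|N_\Omega f|}\le e^{\beta A}\,e^{\beta|N_\Omega f_1|}.
\]
By Cauchy--Schwarz, $\int_\Omega e^{\beta|N_\Omega f_1|}\,d\mu_k\le \mu_k(\Omega)^{1/2}C_{\Omega,k}^{1/2}$. (Because of the factor $2$ built into the choice of $N$, one can also bound this integral directly by $C_{\Omega,k}$ without Cauchy--Schwarz.) Hence $e^{\beta|N_\Omega f|}\in L^1(\Omega,d\mu_k)$. Note that this integrability constant depends on $f$ through $N$, not only on $\|f\|_{L^1}$; the statement does not require uniformity.

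The only point needing care is the uniform bound $\sup_x\int_\Omega N_k(x,y)\,d\mu_k(y)<\infty$ in the bounded piece. This rests on Lemma~\ref{lem:uniform-log-bound}, which in turn uses the hypothesis $\Omega\Subset\R^2_{\mathrm{reg}}$; everything else is routine.
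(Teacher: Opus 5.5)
Your proposal is correct and follows essentially the same route as the paper. The paper splits $f$ into a bounded part $g$, whose potential is bounded by the uniform logarithmic majorant, and a remainder $f-g$ with $\beta\|f-g\|_{L^1}<c_{\Omega,k}$, which Theorem~\ref{thm:brezis-merle-type} handles, and then combines the two pieces pointwise. Your truncation $f\mathbf 1_{\{|f|\le N\}}$ is one concrete choice of that $g$, and your factor-$2$ margin and Cauchy--Schwarz step are harmless but unnecessary.
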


\begin{proof}
Choose $g\in L^{\infty}(\Omega,d\mu_k)$ so that
\[
  \|f-g\|_{L^1(\Omega,d\mu_k)}<\frac{c_{\Omega,k}}{\beta}.
\]
Equivalently, this choice satisfies the strict condition
\[
  \beta\|f-g\|_{L^1(\Omega,d\mu_k)}<c_{\Omega,k}.
\]
The logarithmic majorant is integrable uniformly in its pole, and hence
\[
  \|N_\Omega g\|_{L^{\infty}(\Omega)}
  \leq C_{\Omega,k}\|g\|_{L^{\infty}(\Omega)}.
\]
If $f\neq g$, Theorem~\ref{thm:brezis-merle-type}, applied to $f-g$, shows that
$\exp\bigl(\beta|N_\Omega(f-g)|\bigr)$ is integrable; if $f=g$, this
is immediate. Since
\[
  e^{\beta|N_\Omega f|}
  \leq e^{\beta\|N_\Omega g\|_{\infty}}
        e^{\beta|N_\Omega(f-g)|},
\]
the conclusion follows.
\end{proof}

Finally, it is now easy to complete the

\begin{proof} [Proof of Theorem~\ref{thm:intro-brezis-merle}] From  Proposition~\ref{prop5.1}, Theorem~\ref{thm:brezis-merle-type}   and Corollary~\ref{coro5.4},
Theorem~\ref{thm:intro-brezis-merle} readily follows.

\end{proof}

\medskip

\noindent
{\bf Acknowledgements}:   Lixin Yan was supported  by National Key R$\&$D Program of China 2022YFA1005700  and 
by  NNSF of China (No. 12571111).

\end{document}